\newcommand{\Z}{{\mathbb Z}}
\newcommand{\RR}{{\mathbb R}}
\newcommand{\R}{{\mathbb R}}
\newcommand{\N}{{\mathbb N}}
\newcommand{\NN}{{\mathbb N}}
\newcommand{\comp}{\mathrm{comp}}
\newcommand{\SC}{(\mbox{SC}_\infty)}
\newcommand{\SI}{(\mbox{SI}_\infty)}
\newcommand{\al}{{\alpha}}
\newcommand{\ph}{{\varphi}}
\newcommand{\Lp}{\Delta}
\newcommand{\ra}{{\rightarrow}}
\newcommand{\dd}{{\partial}}
\newcommand{\lra}{\longrightarrow}
\newcommand{\ab}[1]{\left( #1\right)}
\newcommand{\aV}[1]{\left\Vert#1\right\Vert}
\newcommand{\ov}[1]{#1'}
\newcommand{\ow}[1]{\widetilde{#1}}
\DeclareMathOperator{\supp}{supp}
\newtheorem{theorem}{Theorem}
\newtheorem{lemma}{Lemma}[section]
\newtheorem{coro}[lemma]{Corollary}
\newtheorem{definition}[lemma]{Definition}
\newtheorem{prop}[lemma]{Proposition}
\newtheorem*{remark}{Remark}
 \newlength\headseptemp
\newcommand{\Hmm}[1]{\leavevmode{\marginpar{\tiny%
$\hbox to 0mm{\hspace*{-0.5mm}$\leftarrow$\hss}%
\vcenter{\vrule depth 0.1mm height 0.1mm width \the\marginparwidth}%
\hbox to 0mm{\hss$\rightarrow$\hspace*{-0.5mm}}$\\\relax\raggedright #1}}}
\begin{document}
\title[]{Dirichlet forms and stochastic completeness of graphs and subgraphs}
\author[]{Matthias Keller$^1$}
\author[]{Daniel Lenz$^2$}
\address{$^1$ Mathematisches Institut, Friedrich Schiller Universit\"at Jena, D-07743 Jena, Germany, m.keller@uni-jena.de}
\address{$^2$ Mathematisches Institut, Friedrich Schiller Universit\"at Jena,
D-07743 Jena, Germany, daniel.lenz@uni-jena.de, URL: http://www.analysis-lenz.uni-jena.de/
}

\begin{abstract}  We study Laplacians on graphs and networks via regular Dirichlet forms.
We give  a sufficient geometric condition for essential selfadjointness and explicitly determine the generators of the associated semigroups  on all $\ell^p$,  $1\leq p < \infty$, in this case.
  We characterize stochastic completeness  thereby generalizing all earlier corresponding results for graph Laplacians.     Finally, we study how  stochastic completeness of a subgraph is related
  to stochastic completeness of the whole graph.
\end{abstract}
\date{\today} %
\maketitle
\setlength{\parindent}{0pt}

\section*{Introduction}
There is a long history to the study of the heat equation  and spectral theory on graphs and networks  (see e.g. the monographs \cite{Chu,Col} and references therein). The corresponding operators are known as  Laplacians, acoustic operators or generators of symmetric Markov processes on the graph or network. A substantial part of this literature is devoted to graphs giving   Laplacians, which are bounded on $\ell^2$. Recently, certain basic  questions concerning unbounded Laplacians have received attention.   This is the starting point for our paper. More precisely, we use the framework of regular Dirichlet forms in order to
\begin{itemize}
\item define the Laplacians  on  networks via forms (Section~$1$),
\item study  essential selfadjointness (Theorem~\ref{essential}),
\item determine the generators of the associated semigroups on $\ell^p$, $1\leq p < \infty$, under suitable conditions (Theorem~\ref{generator}),
\item characterize stochastic completeness (Theorem~\ref{main0}),
\item investigate the relationship between stochastic completeness of graphs and subgraphs (Theorem  \ref{main1b}, Theorem \ref{main1},  Theorem~\ref{main2}).
\end{itemize}
The use of Dirichlet forms allows us to deal with these questions in a rather general setting.   In particular, our results seem to extend all earlier corresponding results.  Furthermore, we hope that our results and the thorough discussion of background and context may be useful in the study of further questions as well.

\smallskip

Let us discuss these topics  in more detail: There are recent investigations of  essential selfadjointness  of corresponding Laplacians by Jorgensen \cite{Jor}, of stochastic
completeness by Dodziuk and Matthai \cite{DM}, and of both essential selfadjointness and stochastic completeness by Dodziuk \cite{Dod2}, Wojciechowski \cite{Woj} (see \cite{Woj2} as well)  and  Weber  \cite{Web}. These investigations deal with locally finite graphs and the associated operators. While  \cite{DM, Dod2} treat bounded Laplacians, \cite{Jor,Woj,Web} do neither assume a uniform bound on the vertex degree nor a modification of the measure and, accordingly, the resulting Laplacians are not necessarily bounded.
It turns out that all  the Laplacians in question are special instances of generators of regular Dirichlet forms on discrete sets. In fact, there is a one-to-one correspondence between the regular Dirichlet forms on a discrete set and graphs over this set with weights satisfying a certain summability condition. This naturally raises the question to which extent similar  results  to the ones in \cite{Dod2,DM,Jor,Web, Woj} also  hold for arbitrary regular Dirichlet forms on discrete sets.

\smallskip

Our first result, Theorem~\ref{main0},   characterizes stochastic completeness for all regular  Dirichlet forms on discrete sets. This generalizes a main result of \cite{Woj} (see \cite{Dod2,DM,Web} as well for related results and a sufficient condition for stochastic completeness), which in turn is inspired by Grigor'yan's corresponding result for manifolds \cite{Gri}.
Of course, in terms of methods our considerations concerning stochastic completeness  heavily draw on existing literature  as e.g.  Sturm's \cite{Stu} for strongly local Dirichlet forms   and Grigor'yans results \cite{Gri} on Riemannian manifolds.
A crucial difference, however, is that our Dirichlet forms are not local. In this sense our results can be understood as providing some non-local counterpart to \cite{Stu,Gri}.

\smallskip

It should  be emphasized that -- unlike the cited literature --  we do  allow for non vanishing killing terms.  In order to  make sense out  of a notion  of stochastic completeness in the presence of a killing term we actually have to extend the usual definition.  This is done by our concept of stochastic completeness at infinity $\SC$ and stochastic incompleteness at infinity $\SI$. Let us be a bit more precise: Stochastic completeness concerns loss or conservation of heat. Now, loss of heat may occur for two reasons. One reason is killing within the graph by non-vanishing killing term. The other reason is heat transport to 'infinity' or the 'boundary' in finite time. This transport to infinity may happen irrespective of presence of a killing term. It is this transport to infinity which is captured by our notion of stochastic completeness at infinity. Of course, in the case of vanishing killing term stochastic completeness and stochastic completeness at infinity agree.  Our Theorem~\ref{main0} gives a unified treatment of the situation. Note that strengthening of the killing may make the graph actually more complete at infinity as discussed in  Theorem~\ref{main1b}.

\smallskip


Let us also  mention  strongly related work of Feller \cite{Fel,Fel2} and of Reuter \cite{Reu} dealing with uniqueness of Markov process  on discrete sets with given weights. While these works use different methods and  seem to have been somewhat neglected in the above mentioned literature, they in fact cover parts of the abstract  results on  stochastic completeness  discussed  in \cite{Woj,Web}.
They are in some sense even  more general in that they do not assume symmetry of the Markov process.  We will discuss this more specifically after the statement of our corresponding result. However, we stress already here that a crucial part of our result is not covered by  \cite{Fel,Reu}  as we allow for both  a killing term and for arbitrary measures on our underlying set.

\smallskip

Let us emphasize that our treatment requires intrinsically more effort  than the considerations of
\cite{Dod2,DM,Web, Woj} as in our setting the Laplacians (i.e.,  generators of the Dirichlet forms) are known much less explicitly.  In fact, in general not even the functions with compact support will be in the  domain of definition of our Laplacians.

\smallskip

As the functions with compact support need not  belong to the domain of definition of our Laplacians, the question of essential selfadjointness does in general not  make sense in our context.  On the other hand, if the functions with compact support belong to the domain of definition and a certain geometric condition -- called $(A)$
below -- is satisfied, we can prove essential selfadjointness of the Laplacians in question on the set of  functions with compact support (Theorem~\ref{essential}).  This result extends the corresponding result of \cite{Jor,Dod2,Web, Woj} to all regular Dirichlet forms on discrete sets.  Note that this (again) includes the presence  of an arbitrary  killing term and an arbitrary measure on our discrete set. We also give examples in which  essential selfadjointness fails (as does  condition $(A)$).

\smallskip

Along our way, we can also  determine   the generators for the corresponding semigroups on all $\ell^p$, $p\in [1,\infty)$,  for all regular Dirichlet forms on graphs satisfying $(A)$.   These generators turn out to be the ``maximal'' ones (Theorem~\ref{generator}). These results seem to be new even in the situations considered in \cite{Fel,Dod2,DM,Jor,Reu, Web, Woj}.

\smallskip

After these considerations, our  final  aim is to study how  $\SC$ of  a subgraph is related to $\SC$ of the whole graph. There, we obtain two results: We show that any graph is a subgraph of a  graph satisfying $\SC$. This completion can be achieved both by adding killing terms (Theorem~\ref{main1b}) and by adding edges (Theorem~\ref{main1}). We also show that stochastic incompleteness of a suitably modified subgraph implies stochastic incompleteness of the whole graph (Theorem~\ref{main2}). These results seem to be new  even in the contexts discussed earlier.

\smallskip

We have tried to make this paper as accessible and  self-contained as possible for both people with a background in Dirichlet forms and people with a background in geometry. For this reason some arguments are given,  which are certainly well known.

\smallskip

For further studies of certain spectral features of Laplacians in the framework developed below we refer the reader to \cite{HK,KL}, both of which were written after the present paper.

\smallskip

The paper is organized as follows. In Section~\ref{Framework} we present the
notation and our main results. A study of basic properties of Dirichlet forms
on graphs can be found in Section~\ref{Dirichlet}.
In Section~\ref{Essential}
we consider Dirichlet forms on graphs satisfying the condition $(A)$
mentioned above. For
these forms we calculate the generators of the $\ell^p$ semigroups for $p\in
[1,\infty)$ and  we show essential selfadjointness of the generators on $\ell^2$ (whenever the functions with compact support are in the domain of definition). In Section~\ref{Counterexamples} we give examples where essential selfadjointness fails as well as examples of non-regular Dirichlet forms on graphs.
A short discussion of the
heat equation in our framework is given in Section~\ref{heat}.
Section~\ref{Extended} deals
with extending the semigroup and resolvent in question to a somewhat larger
space of functions. In Section~\ref{Characterisation} we can then prove our
result characterizing  stochastic completeness for arbitrary
Dirichlet forms on graphs.
Section~\ref{Stochastically} contains a proof that any graph is a subgraph of
a stochastically complete graph and that any graph can be made stochastically complete by adding a killing term. Section~\ref{An}
contains an incompleteness criterion.

\section{Framework and results} \label{Framework}
Throughout $V$ will be a countable set.   Let $m$ be a measure on $V$ with full support
(i.e. $m$ is a map $m :V\longrightarrow (0,\infty)$). Then, $(V,m)$ is a measure space.  We will deal exclusively with real valued functions. Thus, $\ell^p (V,m)$, $1\leq p<\infty$, is defined by
$$ \{ u : V\longrightarrow \RR: \sum_{x\in V}  m(x)|u(x)|^p <\infty\}.$$
Obviously, $\ell^2 (V,m)$ is a Hilbert space with inner product given by
$$\langle u, v\rangle := \sum_{x\in V} m(x)u(x) v(x)\;\:\mbox{and norm}\;\:\aV{u}:=\langle u, u\rangle^\frac{1}{2}.$$
Moreover we denote by $\ell^\infty(V)$ the space of bounded functions on $V$. Note that this space does not depend on the choice of $m$. It is equipped with the supremum norm $\|\cdot\|_\infty$.

A symmetric non-negative form on
$(V,m)$ is given by a dense  subspace $D$ of $\ell^2 (V,m)$ called the domain of the
form and a map
$$ Q : D\times D \longrightarrow \RR  $$
with $Q(u,v) = Q(v,u)$ and $Q(u,u)\geq 0$ for all $u,v\in D$.
Such a map is already determined by its values on the diagonal.  For $u\in
\ell^2 (V,m)$ we then define $Q(u)$ by $Q(u):= Q(u,u)$ if $u\in D$ and
$Q(u):=\infty$ otherwise. If $\ell^2 (V,m)\longrightarrow [0,\infty]$,
$u\mapsto Q(u)$,  is lower semicontinuous, $Q$
is called closed.  If $Q$ has a closed extension, it is called closable and the
smallest closed extension is called the closure of $Q$.

A map $C:   \RR\longrightarrow \RR $ with $C(0) =0$ and
$|C(x) - C(y)|\leq |x - y|$ is  called a normal contraction. If $Q$ is both
closed and satisfies $Q(Cu) \leq Q(u)$ for all $u\in \ell^2 (V,m)$ and all normal contractions $C$,  it is
called a Dirichlet form on $(V,m)$ (see \cite{BH,Dav1,Fuk,MR} for background on   Dirichlet forms).

 Let $C_c (V)$ be the space of finitely supported functions on $V$. A Dirichlet $Q$ form on $(V,m)$ is called regular if $D(Q)\cap C_c (V)$ is  both dense in  $C_c (V)$ with respect to the supremum norm  and dense in  $D(Q)$ with respect to the form norm given by ${\|\cdot\|}_Q:=\sqrt{\|\cdot\|^2+Q(\cdot)}$. As discussed below, for such a regular form the set $C_c (V)$ is  necessarily contained in the form  domain. Thus, a  Dirichlet form  $Q$ is   regular if it is the closure of its
restriction to the subspace $C_c (V)$.

Regular Dirichlet forms on $(V,m)$ are
given by graphs on $V$, as we discuss next (see Section~\ref{Dirichlet} for details).
A symmetric weighted graph over $V$ or a symmetric  Markov
 chain on $V$ is  a pair  $(b,c)$ consisting of  a map $  b : V\times V\longrightarrow [0,\infty)$ with $b(x,x) =0$ for all $x\in V$ and a map $c : V\longrightarrow [0,\infty)$
satisfying the following two properties:

\begin{itemize}

\item[(b1)] $b(x,y)= b(y,x)$ for all $x,y\in V$.

\item[(b2)] $\sum_{y\in V} b(x,y) <\infty$ for all $x\in V$.
\end{itemize}
We can then  think of $(b,c)$ or rather  the triple  $(V,b,c)$ as a weighted  graph with vertex set $V$ in the following way: An $x\in V $ with
$c(x)\neq 0$ is then thought to be connected to the point $\infty$ by an edge with weight $c(x)$. Moreover, $x,y\in V$ with $b(x,y)>0$ are thought to be connected by an edge with weight $b(x,y)$.  The map $b$ is called the edge
weight. The map $c$ is called killing term.  Vertices $x,y\in V$ with $b(x,y)
>0$ are called neighbors.  More generally, $x,y\in V$ are called connected if
there exist $x_0,x_1,\ldots,x_n, x_{n+1}, \in V$ with $b(x_i, x_{i+1}) >0$,
$i=0,\ldots, n$ and $x_0 = x$, $x_{n+1} = y$. This allows us to define connected
components of $V$ in the obvious way.

To $(V,b,c)$ we associate the form $Q^{\comp}=Q^{\comp}_{b,c}$ on $C_c (V)$  with diagonal $Q^{\comp} : C_c (V) \longrightarrow [0,\infty]$ given by
$$
Q^{\comp}(u) = \frac{1}{2}
\sum_{x,y\in V} b(x,y) (u(x) - u(y))^2 + \sum_{x\in V} c(x) u(x)^2.$$
Obviously, $Q^{\comp}$ is a restriction of the form $Q^{\max}= Q^{\max}_{b,c,m}$ defined on $\ell^2 (V,m)$ with diagonal given by
$$  Q^{\max}  (u) = \frac{1}{2}
\sum_{x,y\in V} b(x,y) (u(x) - u(y))^2 + \sum_{x\in V} c(x) u(x)^2.$$
Here, the value $\infty$ is allowed. It is not hard to see that $Q^{\max}$ is closed  and hence  $Q^{\comp}$ is closable on   $\ell^2
(V,m)$ (see Section~\ref{Dirichlet}) and  the closure will be denoted by $Q=Q_{b,c,m}$ and
its domain by $D(Q)$ which is the closure of $C_c(V)$ with respect to $\|\cdot\|_Q$.  Then, there exists a unique selfadjoint operator $L = L_{b,c,m}$ on
$\ell^2 (V,m)$  such
that
$$ D(Q)  = \mbox{Domain of definition of $L^{1/2}$}$$
and
$$ Q(u) = \langle L^{1/2} u , L^{1/2} u\rangle$$
for $u\in D(Q)$ (see e.g.  Theorem 1.2.1 in \cite{Dav1}). As $Q$ is
non-negative so is $L$.  Moreover,  it is not hard to see that $Q^{\max} (Cu) \leq
  Q^{\max} (u)$ for all $u\in \ell^2 (V,m)$ (and in fact any function $u$)  and every normal contraction $C$.  Theorem
  $3.1.1$ of \cite{Fuk} then implies that $Q$ also satisfies $Q(Cu) \leq Q(u)$
 for all $u\in \ell^2 (V,m)$ and hence is a Dirichlet form. By construction it
 is regular. In fact, every regular  Dirichlet form on $(V,m)$ is of the form
 $Q=Q_{b,c,m}$ (see Theorem~\ref{characterizationDF} in Section~\ref{Dirichlet}).

\medskip

\noindent\textbf{Remark.} Our setting generalizes the setting of \cite{Dod2,DM,Jor,Web,Woj} to Dirichlet forms on
countable sets. In our notation, the situation of \cite{DM,Web,Woj}  can be described by the
assumptions $m\equiv 1$, $c\equiv 0$, and $b(x,y)\in \{0,1\}$ for all $x,y\in
V$ with $x\neq y$ and the setting of \cite{Dod2,Jor} can be described by  $m\equiv 1$, $c\equiv 0$ and $b(x,y) = 0$ for all but finitely many $y$ for each $x\in V$. In particular, unlike \cite{Dod2,DM,Jor,Web,Woj} we do not assume finiteness of the sets $\{y\in V: b(x,y)>0\}$ for all $x\in V$.

\medskip

Let now a measure $m$ on $V$ with full support and a weighted graph $(b,c)$
over $V$ be given. Let $Q$ be the associated form and $L$ its
generator.  Then, by standard theory \cite{Dav3,Fuk,MR}, the operators of the associated semigroup $e^{-tL}$, $t\geq 0$, and the associated  resolvent $\alpha (L +\alpha)^{-1}$, $\alpha >0$,
are  positivity preserving  and even markovian.

Positivity preserving means that they map non-negative functions to non-negative functions.  In
fact, if $(V,b,c)$ is connected they are even positivity improving, i.e.,  map non-negative nontrivial functions to positive functions (see Section~\ref{Dirichlet}). Markovian  means that they map non-negative
functions bounded by one to non-negative functions bounded by one.

This can be used to show that  semigroup and resolvent  extend to  all $\ell^p (V,m)$, $1\leq p\leq \infty$.  These  extensions  are consistent, i.e., two of them agree on their common domain and they are selfadjoint, i.e.,  the adjoint to the extension to $\ell^p (V,m)$ with $1\leq p < \infty$ is given by the extension to $\ell^q (V,m)$ for $1/p + 1/q = 1$, see  \cite{Dav1}.
The corresponding generators are denoted  by $L_p$. Thus, the extension  of $(L + \alpha)^{-1}$ to $\ell^p (V,m)$ is given by $(L_p + \alpha)^{-1}$.   In particular  we have $L = L_2$.

We can  describe the action of  the operator $L_p$ explicitly  (in  Section~\ref{Dirichlet}) as follows (see Theorem~\ref{obermenge}):
Define the formal Laplacian  $\widetilde{L} = \widetilde{L}_{b,c,m}$ on the vector space
\begin{equation}\label{ftilde}
\widetilde{F}:=\{ u : V\longrightarrow \RR : \sum_y |b(x,y) u(y)|<\infty\;\mbox{for all
  $x\in V$ } \}
  \end{equation}
 by
$$\widetilde{L} u (x) :=\frac{1}{m(x)} \sum_y b(x,y) (u(x) - u(y)) +
\frac{c(x)}{m(x)}  u(x),$$
where, for each $x\in V$,  the sum exists  by assumption on $u$. Then,
 $L_p$  is a restriction of $\widetilde{L}$ for any $p\in[1,\infty]$.

After having discussed the fact that these are different semigroups on different $\ell^p$ spaces, we will now follow the custom and  write $e^{-t L}$ for all of them.

The preceding  considerations show that
$$ 0 \leq e^{-t L} 1 (x)\leq 1$$
for all $t\geq 0$ and $x\in V$. The question, whether the second inequality is actually an equality  has received quite some attention.  In the case of vanishing killing term, this is discussed under the name of stochastic completeness or conservativeness.
In fact, for $c\equiv 0$ and $b(x,y)\in \{0,1\}$ for all $x,y\in V$, there is a  characterization of stochastic completeness  of Wojciechowski \cite{Woj} (see \cite{Dod2,DM,Web} for related results as well).
This
characterization is an analogue to corresponding results for Markov processes \cite{Fel,Reu}, results on manifolds of Grigor'yan \cite{Gri} and results of  Sturm for general strongly
local Dirichlet forms \cite{Stu}.

Our first main result
concerns a version of this result for  arbitrary regular Dirichlet forms on weighted graphs (see Section~\ref{Characterisation} for details and proofs concerning the subsequent discussion):
In this case, we have to replace $e^{-t L} 1 $ by the function
$$ M_t (x) := e^{-tL} 1 (x) + \int_0^t ({e^{-s L} \frac{c }{m}})(x) ds, \quad x\in V. $$
This  is well defined,  satisfies $0\leq M \leq 1$ and for each $x\in V$, the function $t\mapsto M_t (x)$ is continuous and even differentiable.  Note that for
$c\equiv 0$, we obtain $M = e^{-t L} 1$ whereas for $c\neq 0$ we obtain
$M_t >e^{- t L} 1$ on any connected component of $V$ on which $c$ does
not vanish identically (as the semigroup is positivity improving). The term $e^{-t L}1$ can be interpreted as the amount of heat contained in the graph at time $t$ and the integral can be interpreted as the amount of heat killed within the graph up to the time $t$. Thus, $1- M_t$ is the amount of heat transported to the boundary of the graph by the time $t$ and
$M_t$  can be interpreted as the
amount of heat, which has not been transported to the boundary of the graph at
time $t$. Our question then becomes whether the quantity
$$ 1 - M_t$$
vanishes identically or not.  Our result then reads.

\begin{theorem}\label{main0} (Characterization of heat transfer to the
  boundary) Let $(V,b,c)$ be a weighted graph and $m$ a measure on $V$ of full
  support. Then, for any $\alpha >0$, the
  function
$$ w := \int_0^\infty \alpha e^{-t \alpha } ( 1 - M_t) dt$$
satisfies $0\leq w \leq 1$, solves  $(\widetilde{L} + \alpha) w = 0$, and
is the largest non-negative $l\leq 1$ with $(\widetilde{L} + \alpha) l
\leq  0$.  In particular, the following assertions are equivalent:

\begin{itemize}
 \item[(i)] For any $\alpha>0$ there exists  a nontrivial, non-negative bounded $l$ with  $(\widetilde{L} + \alpha) l \leq 0$.
\item[(ii)] For any $\alpha >0$ there exists a nontrivial bounded $l$ with $(\widetilde{L} + \alpha) l = 0$.
\item[(iii)] For any $\alpha >0$ there exists a nontrivial, non-negative bounded $l$ with  $(\widetilde{L} + \alpha) l = 0$.
\item[(iv)] The function  $w$ is nontrivial
\item[(v)] $M_t (x) <1$ for some $x\in V$ and some  $t>0$.
\item[(vi)] There exists a nontrivial, non-negative  bounded $N : V\times
  [0,\infty)\longrightarrow [0,\infty)$ satisfying
  $\widetilde{L} N + \frac{d}{d t}  N =0$ and  $N_0 \equiv 0$.
\end{itemize}
\end{theorem}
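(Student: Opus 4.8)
The plan is to establish the three asserted properties of $w$ first and then close a cycle of implications among (i)--(vi) around them. Write $N_t := 1 - M_t$, so that $0 \le N_t \le 1$, $N_0 \equiv 0$, and (differentiating the defining formula of $M_t$ and using $\partial_s e^{-sL}g = -\widetilde{L}e^{-sL}g$ together with $\widetilde{L}1 = c/m$) a direct computation shows $N$ solves the heat equation $\partial_t N_t + \widetilde{L}N_t = 0$. Since $w = \alpha\int_0^\infty e^{-\alpha t}N_t\,dt$ is an average of the functions $N_t \in [0,1]$ against the probability density $\alpha e^{-\alpha t}$, the bound $0 \le w \le 1$ is immediate. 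For $(\widetilde{L}+\alpha)w = 0$ I would pull $\widetilde{L}$ (a pointwise, absolutely convergent sum, since $\sum_y b(x,y)<\infty$ and $N$ is bounded) inside the $t$-integral by dominated convergence, replace $\widetilde{L}N_t$ by $-\partial_t N_t$, and integrate by parts; the boundary terms vanish because $N_0 = 0$ and $e^{-\alpha t}N_t \to 0$, leaving $\widetilde{L}w = -\alpha w$. Note $w$ itself lies in the class over which maximality is claimed.

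The substantive step is the maximality of $w$. Writing $u := 1-w$ and re-expressing the Laplace transform by Fubini as $u = G_\alpha f$ with $G_\alpha := (L+\alpha)^{-1}$ and $f := \alpha + c/m \ge 0$, I would reduce the claim to the \emph{minimality of the resolvent}: for $f \ge 0$, the function $G_\alpha f$ is the smallest non-negative $\psi \in \widetilde{F}$ with $(\widetilde{L}+\alpha)\psi \ge f$. Granting this, if $0 \le l \le 1$ and $(\widetilde{L}+\alpha)l \le 0$, then $\psi := 1-l$ satisfies $\psi \ge 0$ and $(\widetilde{L}+\alpha)\psi \ge \widetilde{L}1 + \alpha = \alpha + c/m = f$, so minimality forces $1-l \ge u = 1-w$, i.e. $l \le w$. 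To prove the minimality lemma I would exhaust $V$ by finite sets $V_1 \subset V_2 \subset \cdots$, let $L_n$ be the Dirichlet restriction of $\widetilde{L}$ to $V_n$ (functions extended by $0$ off $V_n$, so that $\widetilde{L}$ and $L_n$ agree on $V_n$ for such functions, the cut edges contributing extra killing), and set $G_\alpha^{(n)} := (L_n+\alpha)^{-1}$. On the finite set $V_n$ the operator $L_n+\alpha$ has a non-negative inverse, $G_\alpha^{(n)}f \ge 0$ is non-decreasing in $n$, and $G_\alpha^{(n)}f \uparrow G_\alpha f$ by regularity of $Q$. Comparing $\psi$ with $G_\alpha^{(n)}f$ on $V_n$ by the discrete minimum principle does the rest: at a negative minimum $x_0$ of $\phi := \psi - G_\alpha^{(n)}f$ (which, since $\phi \ge 0$ off $V_n$, is a global minimum) one has $\widetilde{L}\phi(x_0) \le \frac{c(x_0)}{m(x_0)}\phi(x_0) \le 0$, whence $(\widetilde{L}+\alpha)\phi(x_0) < 0$, contradicting $(\widetilde{L}+\alpha)\phi \ge 0$ on $V_n$; thus $\psi \ge G_\alpha^{(n)}f$, and letting $n \to \infty$ gives $\psi \ge G_\alpha f$.

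With the properties of $w$ in hand the equivalences follow formally. The implications (iv)$\Rightarrow$(iii)$\Rightarrow$(ii) are trivial, as $w$ is itself a non-negative bounded solution of $(\widetilde{L}+\alpha)w = 0$; for (ii)$\Rightarrow$(i) I would pass from a bounded nontrivial solution $l$ to the non-negative subsolution $|l|$, using the Kato-type inequality $\widetilde{L}|l| \le \operatorname{sgn}(l)\,\widetilde{L}l$ to obtain $(\widetilde{L}+\alpha)|l| \le 0$; and (i)$\Rightarrow$(iv) is exactly maximality, for after scaling $l$ to $l \le 1$ one gets $0 \ne l \le w$, so $w$ is nontrivial. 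The equivalence (iv)$\Leftrightarrow$(v) is read off the definition: since $N_t \ge 0$ is continuous in $t$ with $N_0 = 0$, the average $w = \alpha\int_0^\infty e^{-\alpha t}N_t\,dt$ vanishes identically iff $N_t \equiv 0$ iff $M_t \equiv 1$; this also shows nontriviality of $w$ is independent of $\alpha$. Finally (v)$\Rightarrow$(vi) takes the explicit $N_t = 1-M_t$ from the first paragraph (bounded, non-negative, $N_0 \equiv 0$, solving $\widetilde{L}N + \partial_t N = 0$, and nontrivial precisely when (v) holds), while (vi)$\Rightarrow$(iii) applies the same Laplace-transform-and-integrate-by-parts computation as for $w$ to any admissible $N$, producing the non-negative bounded nontrivial solution $\alpha\int_0^\infty e^{-\alpha t}N_t\,dt$ of $(\widetilde{L}+\alpha)l = 0$.

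I expect the main obstacle to be the minimality lemma, and within it the monotone convergence $G_\alpha^{(n)}f \uparrow G_\alpha f$ (which is where regularity of $Q$, i.e. $Q$ being the closure of $Q^{comp}$, is genuinely used) together with the careful bookkeeping of cut edges when identifying $\widetilde{L}$ with $L_n$ on $V_n$. By contrast, the discrete minimum principle, the Kato inequality, and the various Laplace-transform interchanges (Fubini, dominated convergence, differentiation under the integral) are routine.
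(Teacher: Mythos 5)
Your proposal is correct and follows essentially the same route as the paper: the heart of both arguments is the identity $1-w=(L+\alpha)^{-1}(\alpha 1+c)$ together with the minimality of the extended resolvent among non-negative supersolutions, established by exhausting $V$ with finite sets, monotone convergence of the Dirichlet restrictions' resolvents (where regularity enters), and the discrete minimum principle -- precisely the content and proof of Theorem \ref{resolvents}(b). The only deviations are cosmetic: you close the implication cycle in a slightly different order, and in (ii)$\Rightarrow$(i) you pass to $|l|$ via a Kato-type inequality where the paper uses the positive part $l^{+}$.
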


\noindent\textbf{Remark.} (a) Conditions (ii) and (iii) deal with eigenvalues of $\widetilde{L}$ considered as  an operator on $\ell^\infty (V)$. In particular, (ii) must fail (for sufficiently large $\alpha$) whenever $\widetilde{L}$ gives rise to a bounded operator on $\ell^\infty (V)$. Thus, any bounded operator $\widetilde{L}$ yields a stochastically complete graph. In this way we recover the corresponding results of \cite{Dod2,DM}.

(b) The case $c\equiv 0$ $m\equiv 1$, $b(x,y)\in \{0,1\}$ recovers the corresponding result of \cite{Woj}. In fact, in  the case $c\equiv 0$, $m\equiv 1$ and general (not even symmetric)   $b$  the equivalence of (i) (or (ii)) and (v) is  already  discussed in \cite{Fel,Reu}. These works mainly  aim at studying uniqueness of the Markov process, i.e., a (somewhat weaker) version of (vi).  They  characterize this uniqueness by validity of (i) for $m\equiv 1$ and arbitrary $c$.  In this sense it seems fair to say that for $c\equiv 0$ the equivalence of (i), (v) and (vi) is well known and for general $c$ the equivalence of (i) and (vi) is well known.  Besides our new proof (inspired by \cite{Woj,Gri}), our main contribution  here is the definition of $M$ allowing for an  extension of (v)  to situations with killing terms.

(c) The minimum principle discussed below, Theorem \ref{minimum-principle},  will show that for $\alpha >0$  any  nontrivial, non-negative  solution $u$ of $(\widetilde{L} + \alpha) u = 0$ satisfies $u>0$ if the underlying graph is connected.

(d) Let $L$ be the operator associated to a weighted graph $(V,b,c)$ and $L_0$ the operator associated to $(V,b,0)$, both with respect to the same  measure $m:V\to(0,\infty)$. The equivalence of (i) and (v) in the  theorem above obviously implies $M_t =1$ whenever $e^{-tL_0}1=1$, since $\ow L l\geq \ow L_0l$ for every non-negative $l\in \ow F$.
\medskip

The previous  theorem suggests the following definition for stochastic completeness at infinity and stochastic incompleteness at infinity for general  Dirichlet forms on weighted graphs.

\begin{definition}
The weighted graph $(V,b,c)$ with the measure $m$ of full support is said to satisfy $\SI$ if it satisfies one (and thus all) of the equivalent assertions  of Theorem~\ref{main0}. Otherwise $(V,b,c)$ is said to satisfy $\SC$.
\end{definition}

\noindent\textbf{Remark.} Note that validity of $\SI$ depends on both $(V,b,c)$ and
$m$. In fact, for given $(V,b,c)$ it is always possible to choose $m$ in such
a way that $\widetilde{L}$ becomes a bounded operator on $\ell^\infty
(V)$. Then, $\SC$ holds (by the previous remark).

\smallskip

The following two results show how  graphs can be made to satisfy $\SC$ by addition of killing terms or edges.
They  seem to be new even in the setting considered in \cite{Dod2,DM,Web,Woj}.

\begin{theorem}\label{main1b} Let $m$ be a measure on $V$ with full support.  For any weighted graph $(V,b,c)$ there is $c':V\to[0,\infty)$ such that $(V,b,c+c')$ satisfies $\SC$.
\end{theorem}

\noindent\textbf{Remark.} Of course, addition of killing terms yields to loss of mass from the graph reflected in the inequality $e^{-t L} 1< 1$.  As our concept of $\SC$ only considers  mass transported to the geometric boundary of the graph, we can have and even enforce $\SC$ by adding killing terms. More precisely, the
 theorem can  be understood in the following way: Adding a killing term kills heat within the graph on any vertex where the killing term does not vanish. If we eliminate enough heat by the killing terms, we can achieve that no more heat is transferred to the geometric boundary of the graph.

\smallskip

A subgraph $(W,b_W,c_W)$ of a weighted graph  $(V,b,c)$ is given by a subset $W$ of $V$ and the restriction $b_W$ of $b$ to $W\times W$ and the restriction $c_W$ of $c$ to $W$.
The weighted graph $(V,b,c)$ is then called a supergraph to $(W,b_W,c_W)$.  Given a
measure $m$ on $V$ we denote its restriction to $W$ by $m_W$. The subgraph
$(W,b_W,c_W)$ then  gives rise to a form on $\ell^2
(W, m_W)$  with  associated operator $L_{b_W,c_W,m_W}$.

\begin{theorem}\label{main1} Any weighted graph is the subgraph of a weighted graph satisfying $\SC$. This supergraph can be chosen to have vanishing killing term if the original graph has vanishing killing term.
\end{theorem}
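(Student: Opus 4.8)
The plan is to prove a Khas'minskii-type sufficient condition for (SC) and then to engineer a supergraph on which this condition can be met. The criterion I would isolate is this: \emph{if the supergraph $(V',b',c')$ with measure $m'$ carries a function $\gamma:V'\longrightarrow[1,\infty)$ that is proper (i.e. $\{\gamma\leq R\}$ is finite for every $R$), lies in the associated $\widetilde{F}$, and satisfies an edge-Laplacian lower bound
\[
\frac{1}{m'(x)}\sum_{y} b'(x,y)\bigl(\gamma(x)-\gamma(y)\bigr)\geq -C\qquad(x\in V')
\]
for some constant $C$, then $(V',b',c')$ satisfies} (SC). To prove this, fix $\alpha>0$ and let $l$ be bounded, non-negative with $(\widetilde{L}+\alpha)l\leq 0$ pointwise; since $l$ is bounded and $\sum_y b'(x,y)<\infty$, we have $l\in\widetilde{F}$. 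For $\varepsilon>0$ the function $g:=l-\varepsilon\gamma$ tends to $-\infty$ (as $l$ is bounded and $\gamma$ proper), so it attains its maximum at some $x_0$. There $l(x_0)-l(y)\geq\varepsilon(\gamma(x_0)-\gamma(y))$ for all $y$, hence, discarding the non-negative killing term of $\widetilde{L}l(x_0)$ and invoking the lower bound,
\[
-\alpha l(x_0)\ \geq\ \widetilde{L}l(x_0)\ \geq\ \frac{\varepsilon}{m'(x_0)}\sum_y b'(x_0,y)\bigl(\gamma(x_0)-\gamma(y)\bigr)\ \geq\ -\varepsilon C .
\]
Thus $\alpha\,l(x_0)\leq\varepsilon C$; since $l(\bar x)-\varepsilon\gamma(\bar x)\leq g(x_0)\leq l(x_0)$ for any fixed $\bar x$, letting $\varepsilon\to0$ forces $l(\bar x)\leq0$, so $l\equiv0$. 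Hence no nontrivial non-negative (in particular no positive) bounded subsolution exists, (SI) fails, and (SC) holds by Theorem \ref{main0}. Note that the killing term was only dropped, never estimated, so the argument is insensitive to $c$.

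Next I would construct $(V',b',c',m')$, keeping $c'|_V=c$ and $c'\equiv0$ on the new vertices, so that $c'\equiv0$ whenever $c\equiv0$. The first step fixes a proper $\gamma$ on $V$ that is summable against $b$. Enumerating $V=\{x_1,x_2,\dots\}$ and using $\sum_n b(x_k,x_n)<\infty$ for each $k$, a diagonal choice of cut-offs $M_1<M_2<\cdots$ with $\sum_{n>M_j}b(x_k,x_n)\leq 2^{-j}$ for all $k\leq j$ lets me set $\gamma(x_n):=1+\max\{j:M_j<n\}$. Then $\gamma\geq1$ is proper on $V$, and a geometric estimate gives $\sum_y b(x,y)\gamma(y)<\infty$ for every $x$, so in particular $S(x):=\sum_{y\in V}b(x,y)(\gamma(x)-\gamma(y))$ is a finite real number. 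The second step attaches to each $x\in V$ one new pendant vertex $z_x$, joined to $x$ by an edge of weight $W_x$ and carrying $\gamma(z_x):=\tfrac12\gamma(x)$. Choosing $W_x$ so large that $S(x)+\tfrac12 W_x\gamma(x)\geq -C\,m(x)$ renders the edge part of the Laplacian at $x$ bounded below by $-C$, and choosing the free measure $m'(z_x)$ so large that $\tfrac12 W_x\gamma(x)\leq C\,m'(z_x)$ renders it bounded below at $z_x$ as well.

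It then remains to verify the hypotheses of the criterion. Symmetry (b1) and summability (b2) are clear, since exactly one edge of finite weight is added at each vertex; $\gamma$ is proper on $V'$ because $\{\gamma\leq R\}$ meets $V$ and $\{z_x\}$ in the finite sets $\{\gamma|_V\leq R\}$ and $\{\gamma|_V\leq 2R\}$; and $\gamma\in\widetilde{F}$ for the supergraph because the added contribution $W_x\gamma(z_x)$ at each $x$ and the single term $W_x\gamma(x)$ at each $z_x$ are finite. By construction $(V,b,c)$ with $m$ is a subgraph of $(V',b',c')$ with $m'$, and the criterion yields (SC).

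I expect the main obstacle to be the tension, in the first step, between making $\gamma$ proper (needed so that $g\to-\infty$ and the maximum is attained) and keeping it summable against the given, possibly non-decaying, weights $b(x,\cdot)$; this is precisely what the diagonal cut-off construction resolves, and it is genuinely needed because $S(x)$ must be a finite number for a finite pendant to repair the lower bound. A secondary point to handle with care is that the heavy pendants must carry sufficiently large measure $m'(z_x)$, so that repairing the lower bound at the vertices of $V$ does not destroy it at the newly created vertices.
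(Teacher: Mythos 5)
Your proposal is correct, and it takes a genuinely different route from the paper. The paper's proof attaches to each vertex $x$ a number $d(x)\deg_{b_W}(x)m(x)$ of copies of the half-line $\{0,1,2,\dots\}$ with unit edge weights \emph{and unit measure}, and then argues by contradiction: a lemma shows that any bounded positive $\alpha$-harmonic function must satisfy $u(y)<\tfrac{2}{2+\alpha}u(x_r)$ at the first vertex $y$ of every ray attached at $x_r$, and this is used to inductively build a path inside the original vertex set along which any bounded positive solution of $(\widetilde{L}+\alpha)u=0$ grows by factors $\rho_r^{-1}\bigl(1+\tfrac{\alpha d(x_r)}{2+\alpha}\bigr)$ whose product diverges, contradicting boundedness; Theorem \ref{main0} then gives (SC). You instead isolate a reusable Khas'minskii-type maximum-principle criterion (a proper $\gamma\in\widetilde{F}$ with edge Laplacian bounded below forces (SC)) and meet it by adding one pendant vertex per original vertex, with large edge weight $W_x$ \emph{and large measure} $m'(z_x)$. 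Both proofs are legitimate because both leave $(V,b,c)$ and the measure $m$ on $V$ untouched and only add new structure, which is what keeps the theorem from being the trivial remark of Section \ref{Framework} that (SC) can always be arranged by enlarging $m$. What your approach buys is a clean, mechanically verifiable sufficient condition of independent interest (a non-local analogue of the Khas'minskii/Grigor'yan test), together with a nice diagonal construction of a proper, $b$-summable $\gamma$ that works without local finiteness. What the paper's construction buys is precisely what its closing remark insists on: the attached pieces carry weight $1$ and measure $1$, so completeness is manifestly not the effect of gluing on heavy reservoirs; your pendants with $m'(z_x)\geq W_x\gamma(x)/(2C)$ are exactly the kind of "heavy boundary" device whose impression the authors wanted to avoid (their own remark sketches a pendant-vertex variant, but with unit measure, using the forced relation $(1+\alpha)u(z_x)=u(x)$). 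Two cosmetic patches: with $\gamma(z_x):=\tfrac12\gamma(x)$ your extended function takes values in $[\tfrac12,\infty)$ rather than $[1,\infty)$ as your criterion demands, so either replace $\gamma$ by $\gamma+1$ on $V$ before attaching pendants or state the criterion for proper $\gamma$ bounded below (only $\gamma\geq 0$ is actually used); and at the maximum point you should note explicitly that both sums converge absolutely (by boundedness of $l$, condition (b2), and $\gamma\in\widetilde{F}$), so that the termwise inequality $l(x_0)-l(y)\geq\varepsilon(\gamma(x_0)-\gamma(y))$ may indeed be summed.
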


\begin{remark}{\rm Note that (in the common definitions)  the volume growth of balls in a graph dominates the volume growth of
balls in any of its subgraph. Thus, the  theorem has the  consequence that  failure of $\SC$ can not be inferred from lower bounds  on the growth of volumes of ball.}
\end{remark}

While subgraphs do not force incompleteness according to  Theorem~\ref{main1},
suitably adjusted subgraphs do force incompleteness of the whole graph. In
order to be more precise, we need some more notation.

Let $(V,b,c)$ be a weighted graph  with measure $m$ of full support and  $W$ a subset of $V$. Let $m_W$ be the restriction of $m$ to $W$.  Let   $i_W : \ell^2
(W,m_W)\longrightarrow \ell^2 (V,m)$ be  the canonical embedding, i.e.,  $i_W (u)$ is the extension of $u$ to $V$ by setting $i_W (u)$ identically zero outside of $W$. Let $p_W : \ell^2 (V,m)
\longrightarrow \ell^2 (W,m_W)$ be the canonical projection, i.e., the adjoint of $i_W$.  Then, $W$ gives rise to the
form $Q_{W}^{\comp, (D)}$  defined  on $C_c (W)$  by
$$ Q_{W}^{\comp, (D)}  (u) := Q ( i_W u) =  Q_{b_W,c_W}^{\comp} (u) +\sum_{x\in W} d_W (x)
u(x)^2 .$$
Here, $d_W (x) := \sum_{y\in V \setminus W} b(x,y)$ describes the
   edge deficiency  of  vertices in $W$ compared to the same vertex in $V$.
Thus, $Q_{W}^{\comp, (D)}$ is in fact the form  $Q^{\comp}$ of  the weighted graph $(W,b_W^{(D)},c_W^{(D)})$ with
$$b_W^{(D)} = b_W \:\quad \mbox{and}\quad \: c_W^{(D)} = c_W + d_W.$$
In particular, by the theory developed above, its closure in $\ell^2 (W,m_W)$,  denoted by  $Q_{W}^{(D)}$,   is a Dirichlet form.
The associated selfadjoint operator will be denoted by $L_{W}^{(D)}$.
This operator is sometimes thought of as  a restriction of the original operator to $W$ with Dirichlet
boundary condition. For this reason we include the superscript  $D$
in the notation. Another interpretation (suggested by the above
expression for the form) is to think about the graph which arises from the subgraph $W$ by adding one way edges to a vertex at infinity according to the mentioned edge deficiency.

Again, it  is not hard to
express the action of  $L_{W}^{(D)}$ explicitly. In fact, the above considerations applied to the graph  $(W,b_W^{(D)},c_W^{(D)},m_W)$ show that
$$   L_{W}^{(D)} u =  \widetilde{L}_W^{(D)} u $$
for any $u\in D(L_{W}^{(D)})$. Here,
the formal Dirichlet Laplacian  $  \widetilde{L}_W^{(D)}$  on $W$ is defined on  $p_W \widetilde{F} = i_w^{-1} (\widetilde{F})$ and given by
\begin{align*}
{\widetilde{L}_W^{(D)} u (x)=\frac{1}{m(x)}\left( \sum_{y\in W} b^{(D)}_{W}(x,y)(u(x)-u(y))+c^{(D)}_{W}(x) u(x)\right)= \widetilde{L} i_W u (x)}
\end{align*}
for $x\in W$.  These considerations give that for a function $u$ on $W$ (which is extended by $0$ to $V$) the equality
\begin{equation}
\widetilde{L}_W^{(D)} u (x)  = \widetilde{L} u (x)
\end{equation}
holds for any $x\in W$. This will be used repeatedly in the sequel.  Note also  that for $W=V$ we recover the operator on the whole graph, i.e., $\widetilde{L}_V^{(D)} = \widetilde{L}$ and $
L_{V}^{(D)} = L$.

\smallskip

The following result seems to be new even in the setting considered in \cite{Dod2,DM,Web,Woj}.

\begin{theorem}\label{main2}
  Let $(b,c)$ be a weighted graph over $V$ and $m$ a measure on $V$ of full
  support. Then $\SI$ holds, whenever there exists $W\subseteq V$ such
  that the weighted graph $(b_W^{(D)},c_W^{(D)})$ over the measure space
  $(W,m_W)$ satisfies $\SI$.
\end{theorem}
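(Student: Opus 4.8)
The plan is to exploit the extension identity relating the two formal Laplacians — namely that $\widetilde{L}_W^{(D)} u(x)=\widetilde{L} u(x)$ for $x\in W$ whenever $u$ is extended by zero to all of $V$ — together with the characterization of (SI) furnished by Theorem~\ref{main0}. By the equivalence of (i) with the definition of (SI), it suffices to show that for every $\alpha>0$ there is a nontrivial, non-negative, bounded $l$ on $V$ with $(\widetilde{L}+\alpha) l\leq 0$. So I fix $\alpha>0$ and build such an $l$ out of the corresponding subsolution on $W$.

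Since $(b_W^{(D)},c_W^{(D)})$ over $(W,m_W)$ satisfies (SI), Theorem~\ref{main0} applied to this subgraph yields a nontrivial, non-negative, bounded $v$ on $W$ with $(\widetilde{L}_W^{(D)}+\alpha) v\leq 0$ on $W$. I then set $l:=i_W v$, the extension of $v$ by zero outside $W$. Boundedness of $v$ together with (b2) gives $\sum_y |b(x,y) l(y)|\leq \|v\|_\infty \sum_y b(x,y)<\infty$ for every $x\in V$, so $l\in\widetilde{F}$ and $\widetilde{L} l$ is well defined; moreover $l$ is non-negative, bounded and nontrivial.

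It remains to verify $(\widetilde{L}+\alpha) l\leq 0$ on all of $V$, which I split into two cases. For $x\in W$ the extension identity gives $\widetilde{L} l(x)=\widetilde{L}_W^{(D)} v(x)$, and since $l(x)=v(x)$ there, $(\widetilde{L}+\alpha) l(x)=(\widetilde{L}_W^{(D)}+\alpha) v(x)\leq 0$ by the choice of $v$. For $x\in V\setminus W$ one has $l(x)=0$, so that
$$\widetilde{L} l(x)=\frac{1}{m(x)}\sum_{y} b(x,y)\big(l(x)-l(y)\big)+\frac{c(x)}{m(x)} l(x)=-\frac{1}{m(x)}\sum_{y\in W} b(x,y) v(y)\leq 0,$$
where the last inequality uses $b\geq 0$ and $v\geq 0$; adding $\alpha l(x)=0$ preserves the inequality. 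Hence $(\widetilde{L}+\alpha) l\leq 0$ throughout $V$, and as $\alpha>0$ was arbitrary, Theorem~\ref{main0} shows that $(V,b,c,m)$ satisfies (SI).

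I do not expect a genuine obstacle here: the whole force of the statement is already encoded in the definition of the Dirichlet subgraph, whose edge-deficiency term $d_W$ is precisely what makes $\widetilde{L}$ agree with $\widetilde{L}_W^{(D)}$ on the zero-extension $i_W v$ over $W$. The one point requiring a little care is that I invoke the non-negative (rather than strictly positive) subsolution characterization of (SI) from Theorem~\ref{main0}, since $i_W v$ vanishes off $W$ and cannot be strictly positive; the subsolution inequality off $W$ then reduces to nonnegativity of $b$ and of $v$.
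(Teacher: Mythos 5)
Your core computation is sound: the zero extension $l := i_W v$ of a bounded non-negative subsolution on $W$ does satisfy $(\widetilde{L}+\alpha)l \le 0$ on all of $V$ (on $W$ by the identity $\widetilde{L}_W^{(D)}v = \widetilde{L}(i_W v)$, off $W$ because $b \ge 0$ and $v \ge 0$), and this verifies condition (i) of Theorem \ref{main0} for $(V,b,c,m)$. The gap is your bridging claim: the ``equivalence of (i) with the definition of (SI)'' is neither stated nor proved in the paper, and it is not free. The paper consistently distinguishes ``positive'' (pointwise $>0$) from ``non-negative'' (see, e.g., its definition of positivity improving), and (SI) demands a \emph{positive} bounded subsolution; your $l$ vanishes identically on $V\setminus W$, so it does not witness (SI). Producing strict positivity is exactly where the paper's proof does its work: instead of extending by zero, it extends $u$ to $\overline{W} = V\setminus W$ by $\overline{u} := (L_{\overline{W}}^{(D)}+\alpha)^{-1}\varphi$, where $\varphi \in \ell^2(\overline{W})$ is supported on the outer boundary $\partial W$ and strictly positive there, dominated by $\psi(x) = \frac{1}{m(x)}\sum_{y \in W} b(x,y)u(y)$; connectivity (Lemma \ref{hilfeeins}) and positivity improving (Lemma \ref{hilfezwei}) make $\overline{u}$ positive on all of $\overline{W}$, and the four-case analysis via Lemma \ref{hilfedrei}, with the inequality $\varphi \le \psi$ handling the case $x \in \partial W$, gives the subsolution property for the resulting positive extension.

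Your argument can be completed, but only with an extra step and an extra hypothesis. If $(V,b,c)$ is connected (an assumption the paper's own proof also uses implicitly through Lemma \ref{hilfeeins}), normalize $v$ so that $l \le 1$; then by the maximality statement of Theorem \ref{main0} the function $w$ satisfies $w \ge l$, hence is nontrivial, and $w$ solves $(\widetilde{L}+\alpha)w = 0$, so the minimum principle (applied with $U = V$) forces $w>0$ on the connected graph; this $w$ is then the positive bounded subsolution required by (SI). Without connectivity, the equivalence you assert actually fails: adjoin to a stochastically incomplete graph $W$ an isolated vertex $z$ with $c(z)=0$ and no edges; then $(b_W^{(D)},c_W^{(D)})=(b_W,c_W)$ satisfies (SI) and your $l$ gives condition (i) on $V$, but any subsolution must satisfy $\alpha v(z) \le 0$, so no positive bounded subsolution on $V$ exists. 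So the step you pass over is the genuine content of the theorem's proof, not a formality.
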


As an example of a situation in which the theorem may be applied we note the following consequence.

\begin{coro}
  \label{beispiel} Let $(b,0)$ be a weighted graph over $V$ with vanishing killing term and $m$ a measure on $V$ of full   support. Let $W$ be a subset of $V$ such that $(b_W^{(D)}, 0)$ over the measure space $(W,m_W)$ satisfies $\SI$ and there exists $C>0$ with $\sum_{y\in V\setminus W} b(x,y)/m(x)\leq C$ for any $x\in W$. Then  $(b,0)$ over $(V,m)$ satisfies $\SI$.
\end{coro}

So far, we have not discussed  the precise domains of definition for our
operators.  In fact, the actual domains have been quite irrelevant for our considerations.

To determine the domains we need a geometric condition saying that any infinite path has infinite measure. More precisely, we define condition $(A)$ as follows:
\begin{itemize}
\item[$(A)$] The equality $\sum_{n\in \NN} m (x_n) = \infty$ holds for any sequence $(x_n)$ of elements of $V$ such that $b(x_n, x_{n+1})>0$ for all $n\in \NN$ .
\end{itemize}
Of course, an equivalent requirement would be that the equality $m (\{x_n : n\in \NN\})=\infty$ holds
for any sequence $(x_n)$ of pairwise different elements of  $V$ such that $b(x_n,x_{n+1})>0$ for all $n\in \NN$.

Note that $(A)$ is a condition on $(V,m)$ and $(b,c)$
together. If
$$\inf_{x\in V}  m(x) >0$$
holds, then  $(A)$ is satisfied for all weighted graphs $(b,c)$ over $V$.

\medskip

Our  result reads as follows. We are not aware of an earlier  result of this form in this context.

\begin{theorem}\label{generator} Let $(V,b,c)$ be a weighted graph and $m$ a
 measure on $V$ of  full support such that $(A)$ holds. Then, for any $p\in [1,\infty)$ the operator $L_p$ is the
  restriction of $\widetilde{L}$ to
  $$   D(L_p)=\{ u\in \ell^p (V,m) : \widetilde{L} u \in \ell^p
  (V,m)\}. $$
\end{theorem}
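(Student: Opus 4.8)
The plan is to prove the theorem by reducing it to a uniqueness statement. Since $L_p$ is a restriction of $\widetilde{L}$ (Theorem \ref{obermenge}), the inclusion $D(L_p)\subseteq\{u\in\ell^p(V,m):\widetilde{L}u\in\ell^p(V,m)\}$ is immediate, so only the reverse inclusion needs work. For $\alpha>0$ the operator $L_p+\alpha\colon D(L_p)\to\ell^p(V,m)$ is a bijection, because $-L_p$ generates a strongly continuous contraction semigroup on $\ell^p(V,m)$ for $p\in[1,\infty)$, so that $(L_p+\alpha)^{-1}=\int_0^\infty e^{-\alpha t}e^{-tL_p}\,dt$ is a bounded, everywhere defined inverse. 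Given $u$ in the maximal domain, I set $f:=(\widetilde{L}+\alpha)u\in\ell^p(V,m)$ and pick $v\in D(L_p)$ with $(L_p+\alpha)v=f$; since $L_p\subseteq\widetilde{L}$ this yields $(\widetilde{L}+\alpha)(u-v)=0$. Thus everything reduces to the claim: if $w\in\ell^p(V,m)$ with $\widetilde{L}w\in\ell^p(V,m)$ satisfies $(\widetilde{L}+\alpha)w=0$, then $w=0$. (For $p=2$ this is exactly the essential selfadjointness established in Section \ref{Essential}; the argument below treats all $p$ at once.)

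To prove uniqueness I would first remove the sign of $w$ by a Kato-type inequality. A pointwise computation, using $\operatorname{sgn}(w(x))(w(x)-w(y))\ge |w(x)|-|w(y)|$ term by term, shows $\widetilde{L}|w|\le \operatorname{sgn}(w)\,\widetilde{L}w$. Hence $g:=|w|\ge 0$ lies in $\ell^p(V,m)$, belongs to $\widetilde{F}$, and satisfies $(\widetilde{L}+\alpha)g\le 0$; that is, $g$ is a nonnegative $\ell^p$ subsolution, and it suffices to show $g\equiv 0$.

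The idea for the last step is an integration by parts against the dual function $g^{p-1}\in\ell^{p'}(V,m)$. Formally, Green's formula turns $\sum_x m(x)(\widetilde{L}g)(x)g(x)^{p-1}$ into $\tfrac12\sum_{x,y}b(x,y)(g(x)-g(y))(g(x)^{p-1}-g(y)^{p-1})+\sum_x c(x)g(x)^p$, which is nonnegative since $t\mapsto t^{p-1}$ is nondecreasing and $c\ge 0$. On the other hand, testing $(\widetilde{L}+\alpha)g\le 0$ against $g^{p-1}\ge 0$ gives $\sum_x m(x)(\widetilde{L}g)(x)g(x)^{p-1}\le-\alpha\|g\|_p^p$. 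Combining the two would force $\alpha\|g\|_p^p\le 0$, hence $g\equiv 0$.

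The main obstacle is making this integration by parts rigorous, since on an infinite graph the double sum need not converge absolutely (there is no a priori control of $\sum_x g(x)^p\sum_y b(x,y)$). I would therefore run the computation with finitely supported cutoffs, using the elementary Green's formula $\sum_x m(x)(\widetilde{L}g)(x)\eta(x)=\tfrac12\sum_{x,y}b(x,y)(g(x)-g(y))(\eta(x)-\eta(y))+\sum_x c(x)g(x)\eta(x)$, valid for finitely supported $\eta$, applied to $\eta=g^{p-1}\mathbf{1}_{V_n}$ for an exhaustion $V_1\subseteq V_2\subseteq\cdots$ of $V$ by finite sets. This produces a nonnegative interior energy plus a boundary term $R_n=\sum_{x\in V_n,\,y\notin V_n}b(x,y)(g(x)-g(y))g(x)^{p-1}$, and the argument hinges on choosing the exhaustion so that $\liminf_n R_n\ge 0$, equivalently $\sum_{x\in V_n,\,y\notin V_n}b(x,y)g(x)^{p-1}g(y)\to 0$. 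This is precisely where condition $(A)$ enters: since every infinite path carries infinite measure, the finite $\ell^p$-mass of $g$ must thin out along any escape to infinity, and I would use this to build cutoffs whose transition regions contribute negligibly in the limit. Verifying that $(A)$ yields such cutoffs uniformly in $p$ is the technical heart of the proof; once it is in place, the resulting inequality $\alpha\sum_{x\in V_n}m(x)g(x)^p\le -R_n$ forces $\|g\|_p=0$ and the theorem follows.
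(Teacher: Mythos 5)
Your first two paragraphs are correct and essentially reproduce the paper's own reduction: the inclusion $D(L_p)\subseteq\{u\in\ell^p(V,m):\widetilde{L}u\in\ell^p(V,m)\}$ is Theorem \ref{obermenge}, the surjectivity of $L_p+\alpha$ onto $\ell^p(V,m)$ is the standard semigroup fact underlying Lemma \ref{lzwei}, and the whole theorem thereby reduces to uniqueness of $\ell^p$ solutions of $(\widetilde{L}+\alpha)w=0$ (the paper's Lemma \ref{vanishingonlp}). Your Kato-type inequality $\widetilde{L}|w|\le\operatorname{sgn}(w)\,\widetilde{L}w$ is also valid term by term. The genuine gap is in your third paragraph, which is the only place the hypothesis $(A)$ enters: you reduce everything to producing an exhaustion $(V_n)$ with $\liminf_n R_n\ge 0$, i.e. with vanishing boundary sums $\sum_{x\in V_n,\,y\notin V_n}b(x,y)\,g(x)^{p-1}g(y)$, and you explicitly leave unverified the claim that $(A)$ yields such cutoffs. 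This is not a routine technicality: $(A)$ is a purely path-theoretic condition (every infinite path of pairwise distinct vertices has infinite measure) and carries no quantitative control whatsoever on the edge weights $b(x,y)$ crossing the boundary of a finite set, nor on the above boundary sums, which need not even be summable over all of $V\times V$. Nothing in $(A)$ excludes that, for a hypothetical nonzero subsolution $g$, every exhaustion has non-negligible boundary terms; the only evident way to know that good cutoffs exist is to already know $g\equiv 0$, which is circular. So as written the proof does not close, and I see no way to complete it along these Caccioppoli-type lines from $(A)$ alone.

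The repair is to use $(A)$ pointwise rather than through an energy estimate, and it makes the cutoff machinery unnecessary. Suppose $g\ge 0$, $g\in\ell^p(V,m)$, $(\widetilde{L}+\alpha)g\le 0$ and $g(x_0)>0$ for some $x_0$. Multiplying the subsolution inequality at $x_0$ by $m(x_0)$ gives
$0\ge \sum_{y}b(x_0,y)\bigl(g(x_0)-g(y)\bigr)+c(x_0)g(x_0)+\alpha m(x_0)g(x_0)$,
and since $c(x_0)g(x_0)\ge 0$ and $\alpha m(x_0)g(x_0)>0$, there must exist $x_1$ with $b(x_0,x_1)>0$ and $g(x_1)>g(x_0)$. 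Iterating produces a sequence $(x_n)$ with $b(x_n,x_{n+1})>0$ and $g(x_{n+1})>g(x_n)>\dots>g(x_0)>0$; the strict monotonicity of the values forces the $x_n$ to be pairwise distinct, so $(A)$ gives $m(\{x_n:n\in\NN\})=\infty$, while $g\ge g(x_0)>0$ on this set, contradicting $g\in\ell^p(V,m)$. This is exactly the minimum principle the paper proves in Section \ref{Essential} (the proposition preceding Lemma \ref{vanishingonlp}, there applied to $\pm w$ instead of to $|w|$); substituting it for your third paragraph completes your argument.
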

\noindent\textbf{Remark.} The theory of Jacobi matrices already provides examples
showing that without $(A)$ the statement becomes false for $p=2$. This is discussed in Section~\ref{Counterexamples}.

The condition $(A)$ does not imply that $\widetilde{L} f$ belongs to
$\ell^2 (V,m)$ for all $f\in C_c (V)$.
However,  if this is the case, then $(A)$ does imply essential
selfadjointness. In this case, $Q$ is the ``maximal'' form associated to the weighted
graph $(b,c)$. More precisely, the following holds.

\begin{theorem}\label{essential}
Let $V$ be a set,  $m$ a measure on $V$ with full support,  $(b,c)$ a weighted graph
over $V$ and $Q$ the associated regular  Dirichlet form. Assume
$\widetilde{L} C_c (V) \subseteq  \ell^2 (V,m)$. Then,   $D(L)$ contains $C_c (V)$. If furthermore $(A)$ holds, then the restriction of $L$ to $C_c (V)$  is essentially selfadjoint and the domain of $L$  is given by
$$D (L) = \{u\in \ell^2 (V,m) : \widetilde{L} u\in \ell^2 (V,m)\}$$
and the associated form  $Q$ satisfies  $Q = Q^{\max}$, i.e.,
$$ Q(u) = \frac{1}{2} \sum_{x,y\in V} b(x,y) (u(x) - u(y))^2 + \sum_{x\in V} c(x)u(x)^2$$
for all $u\in \ell^2 (V,m)$.
\end{theorem}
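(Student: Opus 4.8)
The plan is to base everything on one summation-by-parts identity on $C_c(V)$ and then feed it into the abstract form/operator correspondence, using Theorem~\ref{generator} to absorb the role of $(A)$. First I would establish Green's formula: for $u,v\in C_c(V)$ every sum below is finite by (b2), and reordering with the symmetry (b1) gives
\[
Q(u,v)=\tfrac12\sum_{x,y}b(x,y)(u(x)-u(y))(v(x)-v(y))+\sum_x c(x)u(x)v(x)=\langle\widetilde{L}u,v\rangle .
\]
To see $C_c(V)\subseteq D(L)$, fix $u\in C_c(V)$; by hypothesis $\widetilde{L}u\in\ell^2(V,m)$, so $v\mapsto\langle\widetilde{L}u,v\rangle$ is continuous for the $\ell^2$-norm and hence for the form norm, while $v\mapsto Q(u,v)$ is form-continuous by the Cauchy--Schwarz inequality $|Q(u,v)|\le Q(u)^{1/2}Q(v)^{1/2}$. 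Regularity of $Q$ makes $C_c(V)$ form-dense in $D(Q)$, so the identity extends to all $v\in D(Q)$; this says precisely that $u\in D(L)$ with $Lu=\widetilde{L}u$. This part uses neither $(A)$ nor selfadjointness.

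Now assume $(A)$. I would first record the consequence of the hypothesis that $\widetilde{L}\mathbf 1_x\in\ell^2$ is equivalent to $\sum_y b(x,y)^2/m(y)<\infty$, so Cauchy--Schwarz yields $\sum_y b(x,y)|v(y)|<\infty$ for every $v\in\ell^2(V,m)$; thus $\widetilde{L}v$ is defined pointwise for all such $v$. Writing $A:=L|_{C_c(V)}$, I would then compute $A^*$: a function $v\in\ell^2$ lies in $D(A^*)$ iff $\phi\mapsto\langle v,\widetilde{L}\phi\rangle$ is bounded on $C_c(V)$, and expanding $\langle v,\widetilde{L}\phi\rangle$ (legitimate since $\phi$ has finite support and the inner sums $\sum_y b(x,y)v(y)$ converge absolutely) gives $\langle v,\widetilde{L}\phi\rangle=\langle\widetilde{L}v,\phi\rangle$. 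Hence $v\in D(A^*)$ iff $\widetilde{L}v\in\ell^2$, with $A^*v=\widetilde{L}v$. By Theorem~\ref{generator} with $p=2$ this set is exactly $D(L)$ and $\widetilde{L}=L$ on it, so $A^*=L$. Since $L$ is selfadjoint, $A$ is essentially selfadjoint, and the same identity is the asserted description $D(L)=\{u\in\ell^2:\widetilde{L}u\in\ell^2\}$.

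For $Q=Q^{max}$ I would exhibit the generator $L^{max}$ of the closed form $Q^{max}$ as a selfadjoint extension of $A$ and invoke uniqueness. The point is Green's formula for $Q^{max}$: for $u\in C_c(V)$ and $v\in D(Q^{max})$ the double sum converges absolutely, because for each of the finitely many $x\in\supp u$ one has $\sum_y b(x,y)(v(x)-v(y))^2\le 2Q^{max}(v)<\infty$, so the reordering again yields $Q^{max}(u,v)=\langle\widetilde{L}u,v\rangle$. Therefore $C_c(V)\subseteq D(L^{max})$ with $L^{max}|_{C_c(V)}=A$; as $L^{max}$ is selfadjoint and $A$ is essentially selfadjoint, $L^{max}=\overline A=L$, and equal selfadjoint operators have equal forms, giving $Q=Q^{max}$.

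The main obstacle is concentrated entirely in Theorem~\ref{generator}: once it is granted, the argument is formal, combining the two summation-by-parts identities with standard form/operator theory. In a self-contained treatment the crux would instead be the inclusion $\{u\in\ell^2:\widetilde{L}u\in\ell^2\}\subseteq D(L)$, equivalently the vanishing of every $\ell^2$-solution of $(\widetilde{L}+\alpha)v=0$; this is where $(A)$ is indispensable, presumably via a cutoff (Caccioppoli-type) estimate exploiting that every infinite path carries infinite measure. The only remaining care is to justify the various rearrangements of the a priori non-absolutely convergent double sums, all of which are licensed by the bound $\sum_y b(x,y)^2/m(y)<\infty$ coming from $\widetilde{L}C_c(V)\subseteq\ell^2$.
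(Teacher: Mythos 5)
Your proposal is correct and follows essentially the same route as the paper: the summation-by-parts identities (the content of the paper's Proposition \ref{nutzen}), the regularity/continuity argument giving $C_c(V)\subseteq D(L)$, the computation $(L|_{C_c(V)})^{*}=L_{\max}$, and the appeal to Theorem \ref{generator} (where $(A)$ enters) to conclude $L=L_{\max}$ and hence essential selfadjointness. The only cosmetic difference is the last step: you show the operator of $Q^{max}$ \emph{extends} $L|_{C_c(V)}$ and invoke essential selfadjointness, whereas the paper shows that operator is a \emph{restriction} of $L$ by testing against $C_c(V)$ and using Theorem \ref{generator}; both conclude by maximality of selfadjoint operators.
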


\noindent\textbf{Remark.} (a)  If $\inf m(x) >0$ then both  $(A)$ and $\widetilde{L} C_c (V)
\subseteq  \ell^2 (V,m)$ hold  for any weighted graph $(b,c)$ over $V$. In this case, we recover the corresponding results of  \cite{Jor,Web, Woj} on  essential selfadjointness, as these works assume $m\equiv 1$. (They also have additional restrictions on $b$ but this is not relevant here).

(b)   The statement on the form being the maximal one seems to be new even in the context of \cite{Jor,Web, Woj}

(c) Essential selfadjointness fails in general if $(A)$ does not hold as can be seen by examples (see Section~\ref{Counterexamples} and previous remark).
\medskip

\section{Dirichlet forms on graphs - basic facts} \label{Dirichlet}
In this section we consider a countable set $V$ together  with a
measure $m$ of full support.

\begin{lemma} Let $Q$ be a regular  Dirichlet form on $(V,m)$. Then, $C_c (V)$ is contained in $D(Q)$.
\end{lemma}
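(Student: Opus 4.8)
The plan is to exploit two facts: that $C_c(V)$ is the linear span of the single-site indicator functions $\mathbf 1_x$, $x\in V$, and that the domain of a Dirichlet form is stable under post-composition with normal contractions. Since $D(Q)$ is a vector space and $C_c(V)=\operatorname{span}\{\mathbf 1_x : x\in V\}$, it suffices to prove $\mathbf 1_x\in D(Q)$ for each fixed $x\in V$; the full inclusion $C_c(V)\subseteq D(Q)$ then follows by taking finite linear combinations.

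First I would use regularity to produce a good approximant. Density of $D(Q)\cap C_c(V)$ in $C_c(V)$ with respect to the supremum norm yields $u\in D(Q)\cap C_c(V)$ with $\|u-\mathbf 1_x\|_\infty<\tfrac12$. This separates the target site from the rest: $u(x)>\tfrac12$, while $|u(y)|<\tfrac12$ for every $y\ne x$.

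Next I would apply the soft-thresholding normal contraction $C(t)=\operatorname{sgn}(t)\,(|t|-\tfrac12)_+$, which satisfies $C(0)=0$ and is $1$-Lipschitz, hence is a normal contraction. Because $C$ is $1$-Lipschitz with $C(0)=0$ we have $|Cu|\le|u|$ pointwise, so $Cu\in\ell^2(V,m)$; and since $Q$ is a Dirichlet form, the contraction property gives $Q(Cu)\le Q(u)<\infty$, which forces $Cu\in D(Q)$. The point of this choice of $C$ is that, by the separation above, $Cu$ vanishes at every $y\ne x$ (where $|u(y)|<\tfrac12$) and equals $u(x)-\tfrac12>0$ at $x$; thus $Cu=(u(x)-\tfrac12)\,\mathbf 1_x$ is a nonzero scalar multiple of $\mathbf 1_x$. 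Dividing by the scalar gives $\mathbf 1_x\in D(Q)$, completing the argument.

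The only genuinely delicate point is the passage from the inequality $Q(Cu)\le Q(u)$ to the membership $Cu\in D(Q)$: this is not an extra hypothesis but a consequence of the running convention that $Q(w)<\infty$ is equivalent to $w\in D(Q)$, combined with $Cu\in\ell^2(V,m)$. A secondary point to pin down is the precise topology meant by density in $C_c(V)$; any of the standard choices (the uniform norm, or the finer inductive-limit topology of $C_c(V)$) delivers the required supremum-norm approximant, so the argument is insensitive to this choice.
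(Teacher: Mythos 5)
Your proof is correct and takes essentially the same route as the paper: regularity gives a sup-norm approximant in $D(Q)\cap C_c(V)$ that separates the site $x$ from all other vertices, contraction stability of the form domain (via the paper's convention that $Q(w)<\infty$ together with $w\in\ell^2(V,m)$ means $w\in D(Q)$) then isolates a nonzero multiple of $\mathbf 1_x$ in $D(Q)$, and linearity finishes. The only cosmetic difference is that you use a single soft-thresholding contraction, whereas the paper achieves the same effect by composing the modulus with the truncation $\psi\wedge 1$ and subtracting the result from $\psi$.
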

\begin{proof}  Let $x\in V$ be arbitrary. Choose $\ph \in C_c (V)$ with $\ph (x) =2 $ and $\ph (y) =0$ for all $y\neq x$.   As $C_c (V)\cap D(Q)$ is dense in $C_c (V)$ with respect to the supremum norm,  there exists  $\psi\in D(Q)$ with  $\psi (x)>1$ and $|\psi (y)|<1$ for all $y\neq x$.  As $Q$ is a Dirichlet form, $D(Q)$ is invariant under taking modulus and we can assume that  $\psi$  is non-negative. As $Q$ is a Dirichlet form, also $\widetilde{\psi}:=\psi \wedge 1$ belongs to $D(Q)$. (Here, $\wedge$ denotes the minimum.) As $D(Q)$ is a vector space it contains  $\psi - \widetilde{\psi}$ and this is a (nonzero) multiple of $\ph$ by construction. As $x\in V$ was arbitrary, the statement follows.
\end{proof}

\begin{lemma}\label{enthalten}  Let $Q$ be a regular Dirichlet form on $(V,m)$. Then, there exists a
  weighted graph $(b,c)$ over $V$ such that the restriction of $Q$ to $C_c
  (V)$ equals $Q^{\comp}_{b,c}$.
\end{lemma}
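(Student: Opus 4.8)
The plan is to reconstruct $(b,c)$ directly from the values of $Q$ on point masses. By the previous lemma we already know $C_c(V)\subseteq D(Q)$, so $Q$ restricts to a genuine (finite-valued) symmetric bilinear form on $C_c(V)$, and since the indicator functions $\mathbf{1}_{\{x\}}$, $x\in V$, span $C_c(V)$, this restriction is completely determined by the numbers $Q(\mathbf{1}_{\{x\}},\mathbf{1}_{\{y\}})$. Expanding the polarization of $Q^{comp}_{b,c}$ and evaluating it on two point masses, one finds that any graph reproducing $Q$ on $C_c(V)$ must satisfy $Q^{comp}(\mathbf{1}_{\{x\}},\mathbf{1}_{\{y\}})=-b(x,y)$ for $x\neq y$ and $Q^{comp}(\mathbf{1}_{\{x\}})=\sum_y b(x,y)+c(x)$. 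This dictates the definitions
$$ b(x,y):=-Q(\mathbf{1}_{\{x\}},\mathbf{1}_{\{y\}})\quad(x\neq y),\qquad b(x,x):=0,\qquad c(x):=Q(\mathbf{1}_{\{x\}})-\sum_{y}b(x,y). $$
With these in hand, the whole content of the lemma reduces to checking that $(b,c)$ is an admissible graph, i.e.\ that $b\geq 0$ is symmetric and satisfies (b2), and that $c\geq 0$; agreement of $Q$ and $Q^{comp}_{b,c}$ on $C_c(V)$ then follows purely by bilinearity.

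Symmetry of $b$ is immediate from $Q(u,v)=Q(v,u)$. For non-negativity of $b$ I would exploit the Dirichlet property through a well-chosen normal contraction. Fixing $x\neq y$ and $\epsilon>0$, apply the contraction $C(t)=t^{+}=\max(t,0)$ to $w=\mathbf{1}_{\{x\}}-\epsilon\mathbf{1}_{\{y\}}$; since $C(w)=\mathbf{1}_{\{x\}}$, the inequality $Q(Cw)\leq Q(w)$ yields $0\leq -2\epsilon\,Q(\mathbf{1}_{\{x\}},\mathbf{1}_{\{y\}})+\epsilon^{2}Q(\mathbf{1}_{\{y\}})$, and dividing by $\epsilon$ and letting $\epsilon\to 0^{+}$ gives $Q(\mathbf{1}_{\{x\}},\mathbf{1}_{\{y\}})\leq 0$, hence $b(x,y)\geq 0$. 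The symmetric trick handles the other two conditions at once. For a finite $F\ni x$, apply the unit contraction $C(t)=\min(t,1)$ to $\mathbf{1}_F+t\,\mathbf{1}_{\{x\}}$, $t>0$; since $C$ returns $\mathbf{1}_F$, the inequality $Q(Cw)\le Q(w)$ and a division by $t$ followed by $t\to 0^{+}$ give $Q(\mathbf{1}_{\{x\}},\mathbf{1}_F)\geq 0$.

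This last inequality does the remaining work. Writing $Q(\mathbf{1}_{\{x\}},\mathbf{1}_F)=Q(\mathbf{1}_{\{x\}})-\sum_{y\in F\setminus\{x\}}b(x,y)\geq 0$ shows that the partial sums of $\sum_y b(x,y)$ are bounded by $Q(\mathbf{1}_{\{x\}})<\infty$; as all terms are non-negative, the series converges, which is exactly (b2). Letting $F\uparrow V$ in the same identity and using this convergence gives $c(x)=\lim_{F\uparrow V}Q(\mathbf{1}_{\{x\}},\mathbf{1}_F)\geq 0$, since every term of the limit is non-negative. At this point $(b,c)$ is a bona fide weighted graph over $V$, and by construction $Q$ and $Q^{comp}_{b,c}$ agree on all pairs of point masses; bilinearity and the finite-support of elements of $C_c(V)$ then force $Q=Q^{comp}_{b,c}$ on $C_c(V)$ (finiteness of $Q^{comp}_{b,c}$ on $C_c(V)$ being automatic from (b2)).

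I expect the main obstacle to be the two sign statements, non-positivity of the off-diagonal entries and non-negativity of $c$, since these are precisely the places where the Markov/contraction structure of $Q$ is indispensable; everything else is bookkeeping and a monotone limit. One could instead invoke the Beurling--Deny representation (citing \cite{Fuk,BH}), arguing that the strongly local part vanishes because every singleton $\{x\}$ is open, so that only the jump measure and killing measure survive; but in keeping with the self-contained aim of the paper the elementary contraction arguments above seem preferable, as they avoid the full decomposition machinery.
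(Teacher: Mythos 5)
Your proof is correct, but it takes a genuinely different route from the paper's. The paper restricts $Q$ to $C_c(K)$ for finite $K\subseteq V$, invokes the classical Beurling--Deny result (cited as a black box) to obtain graphs $(b_K,c_K)$ with $Q_K=Q^{comp}_{b_K,c_K}$, notes the consistency relations $b_K=b_{K'}$ and $c_K\geq c_{K'}$ for $K\subseteq K'$, and concludes by a monotone limiting procedure. You instead reconstruct $(b,c)$ globally and in one stroke from the values $Q(\mathbf{1}_{\{x\}},\mathbf{1}_{\{y\}})$, and prove the two sign conditions by hand with explicit normal contractions: $t\mapsto t^{+}$ applied to $\mathbf{1}_{\{x\}}-\epsilon\mathbf{1}_{\{y\}}$ gives $b\geq 0$, and $t\mapsto t\wedge 1$ applied to $\mathbf{1}_F+t\mathbf{1}_{\{x\}}$ gives $Q(\mathbf{1}_{\{x\}},\mathbf{1}_F)\geq 0$, from which both the summability (b2) (bounded partial sums) and $c\geq 0$ (monotone limit over $F\uparrow V$) follow. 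In effect you prove inline the finite-dimensional Beurling--Deny lemma that the paper cites, and your uniqueness remark makes explicit the consistency that the paper's limiting procedure uses implicitly; your version is longer but fully self-contained, which fits the paper's stated aim, while the paper's is shorter at the cost of an external reference. Both arguments rest on the same two pillars: $C_c(V)\subseteq D(Q)$ from the previous lemma, and the contraction property of Dirichlet forms.
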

\begin{proof} By the previous lemma, $C_c (V)$ is contained in $D(Q)$.
Then, for any finite $K\subseteq V$,  the restriction $Q_K$ of $Q$ to $C_c (K)$ is a Dirichlet form  as well.  By standard results (see
e.g. Th\'eor\`eme 1 in \cite{BD}), there exists then $b_K$, $c_K$ with $Q_K =
  Q^{\comp}_{b_K,c_K}$. For $K\subseteq K'$ and $x,y\in K$ it is not hard to see that
  $b_K(x,y) = b_{K'}(x,y)$ and $c_K(x) \geq c_{K'} (x)$.
Thus, a simple limiting procedure gives the result.
\end{proof}

\begin{lemma}\label{closable} Let $m$ be a measure on $V$ of full support.
 Let $(b,c)$ be a weighted graph over
  $V$. Then, $Q^{\max}_{b,c,m}$ is closed and  $Q^{\comp}_{b,c}
$ is closable and its  closure $Q_{b,c,m}$  is a restriction of  $Q^{\max}_{b,c,m}$.
\end{lemma}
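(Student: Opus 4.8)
The plan is to prove the three assertions of Lemma~\ref{closable} essentially in the order they are stated: first that $Q^{max}_{b,c,m}$ is closed, then that $Q^{comp}_{b,c}$ is closable with closure dominated by $Q^{max}_{b,c,m}$.

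For the closedness of $Q^{max}$, I would use the characterization of closedness given in the framework, namely that the map $u\mapsto Q^{max}(u)$ is lower semicontinuous on $\ell^2(V,m)$ as a function with values in $[0,\infty]$. The key observation is that $Q^{max}(u)$ is an (infinite) sum of nonnegative terms, each of which depends on $u$ only through finitely many of its values, hence is continuous in the $\ell^2$-topology: indeed, $\ell^2(V,m)$-convergence $u_n\to u$ forces pointwise convergence $u_n(x)\to u(x)$ for each fixed $x\in V$ (since $m(x)>0$), so each summand $b(x,y)(u_n(x)-u_n(y))^2$ and $c(x)u_n(x)^2$ converges pointwise. First I would write $Q^{max}(u)=\sup_F Q^{max}_F(u)$, where $F$ ranges over finite subsets of $V\times V$ (together with a finite killing part) and $Q^{max}_F$ is the corresponding finite partial sum. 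Each $Q^{max}_F$ is continuous by the pointwise-convergence remark, and a supremum of continuous functions is lower semicontinuous. This yields lower semicontinuity of $Q^{max}$ and hence its closedness.

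For closability of $Q^{comp}_{b,c}$, I would exhibit a closed extension, which by definition then shows closability; the natural candidate is $Q^{max}_{b,c,m}$ itself. Since $Q^{comp}$ is by construction the restriction of $Q^{max}$ to $C_c(V)\subseteq\{u:Q^{max}(u)<\infty\}$, and $Q^{max}$ is closed by the first part, $Q^{max}$ is a closed extension of $Q^{comp}$, so $Q^{comp}$ is closable. The closure $Q_{b,c}$ is by definition the smallest closed extension, so it is a restriction of any closed extension, in particular of $Q^{max}$; this gives the final assertion that $Q_{b,c}$ is a restriction of $Q^{max}_{b,c,m}$.

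The main obstacle I anticipate is being careful about the interchange of limits and the (generally infinite) summations when arguing lower semicontinuity: one must ensure that pointwise convergence of the summands together with their nonnegativity is genuinely enough, which is handled cleanly by the supremum-of-finite-sums formulation together with Fatou's lemma (applied to counting measure on $V\times V$) as an alternative route. A secondary technical point is verifying that $C_c(V)$ indeed lies in the domain of $Q^{max}$, i.e.\ that $Q^{comp}(u)<\infty$ for every finitely supported $u$; this follows from assumption (b2), since for finitely supported $u$ the double sum reduces to finitely many terms of the form $\sum_y b(x,y)(u(x)-u(y))^2$, each finite because $\sum_y b(x,y)<\infty$. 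Once these finiteness and semicontinuity points are pinned down, the three conclusions follow formally from the definitions of closed, closable, and closure.
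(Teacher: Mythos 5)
Your proposal is correct and follows essentially the same route as the paper: the paper's proof also reduces everything to closedness of $Q^{max}_{b,c,m}$, established via lower semicontinuity of $u\mapsto Q^{max}(u)$ using Fatou's lemma (your supremum-of-finite-partial-sums formulation is just an equivalent phrasing, and you note the Fatou route yourself), after which closability of $Q^{comp}_{b,c}$ and the restriction property of its closure follow formally. Your additional checks (pointwise convergence from $\ell^2$-convergence since $m$ has full support, and finiteness of $Q^{max}$ on $C_c(V)$ via (b2)) are exactly the details the paper leaves implicit.
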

\begin{proof} It suffices to show that $Q^{\max}_{b,c,m}$ is closed. Thus,
  it suffices to show lower semicontinuity of $u\mapsto Q^{\max}_{b,c,m} (u,u)$. This
  follows easily from follows easily from Fatou's lemma.
\end{proof}

\begin{theorem}\label{characterizationDF} The regular Dirichlet forms on $(V,m)$ are exactly given by
  the forms $Q_{b,c,m}$ with weighted graphs $(b,c)$ over $V$.
\end{theorem}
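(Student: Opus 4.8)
The plan is to establish the two inclusions separately: first that every $Q_{b,c,m}$ is a regular Dirichlet form on $(V,m)$, and conversely that every regular Dirichlet form arises in this way. Most of the work for both directions has already been prepared in the preceding lemmas, so the proof is largely a matter of assembling them.

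For the first inclusion, let $(b,c)$ be a weighted graph over $V$. By Lemma \ref{closable} the form $Q^{comp}_{b,c}$ is closable on $\ell^2(V,m)$ and its closure $Q_{b,c,m}$ is a restriction of $Q^{max}_{b,c,m}$; in particular $Q_{b,c,m}$ is closed. It remains to check the Markovian property. For $u\in C_c(V)$ and any normal contraction $C$ one has $Cu\in C_c(V)$ (since $C(0)=0$), and a termwise estimate gives $Q^{comp}_{b,c}(Cu)\leq Q^{comp}_{b,c}(u)$, using $|C(s)-C(t)|\leq |s-t|$ in the edge sum and $|C(s)|\leq |s|$ in the killing sum. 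Since $Q^{comp}_{b,c}$ is Markovian on the core $C_c(V)$ and closable, Theorem $1.3.1$ of \cite{Fuk} (as already invoked in Section \ref{Framework}) shows that $D(Q_{b,c,m})$ is stable under normal contractions and that $Q_{b,c,m}(Cu)\leq Q_{b,c,m}(u)$ for all $u\in\ell^2(V,m)$. Thus $Q_{b,c,m}$ is a Dirichlet form, and it is regular because by construction it is the closure of its restriction to $C_c(V)$.

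For the converse, let $Q$ be an arbitrary regular Dirichlet form on $(V,m)$. By Lemma \ref{enthalten} there is a weighted graph $(b,c)$ over $V$ whose associated compactly supported form satisfies $Q|_{C_c(V)}=Q^{comp}_{b,c}$. By the definition of regularity, $Q$ is the closure of its restriction to $C_c(V)$, hence the closure of $Q^{comp}_{b,c}$, which is exactly $Q_{b,c,m}$. Therefore $Q=Q_{b,c,m}$, completing the proof.

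The genuine content of the argument sits in the two lemmas rather than in the present theorem: Lemma \ref{closable} secures closability via Fatou, while Lemma \ref{enthalten} produces the graph $(b,c)$ from the Beurling--Deny type representation of Dirichlet forms on finite sets together with a consistent passage to the limit over exhausting finite subsets. The one point requiring care here is the transfer of the Markovian property from the core $C_c(V)$ to the full form domain $D(Q_{b,c,m})$; I would justify this by the cited closure theorem for Markovian forms rather than by attempting to approximate a general $u\in D(Q_{b,c,m})$ and its contraction $Cu$ simultaneously in form norm, which is the step where a naive direct argument would stall.
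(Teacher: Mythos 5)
Your proof is correct and follows essentially the same route as the paper's: the forward direction is Lemma \ref{closable} plus the Markovian-closure argument via Theorem 1.3.1 of \cite{Fuk} already carried out in Section \ref{Framework}, and the converse is Lemma \ref{enthalten} combined with regularity forcing $Q$ to be the closure of $Q^{comp}_{b,c}$. You have merely written out explicitly what the paper's two-line proof delegates to those earlier results, including the (correct) termwise contraction estimate on $C_c(V)$.
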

\begin{proof} By the previous lemma and the discussion in Section~\ref{Framework}, any $Q_{b,c,m}$ is a regular Dirichlet form. The converse
  follows from the previous lemmas.
\end{proof}

The study of regular Dirichlet forms on $(V,m)$ is based on first understanding their restrictions to finite sets. This is done next.

\begin{prop} \label{finite-restriction}  Let $(V,m)$ be given and $(b,c)$ a weighted graph over $V$. Let $K\subset
  V$ be finite. Then, $L_K^{(D)}$ is a bounded operator with
$$ L_K^{(D)} f (x) = \frac{1}{m(x)}\ab{ \sum_{y\in K} b(x,y) (f(x) - f(y)) + \ab{\sum_{y\in V\setminus K}b(x,y)+ c(x)}f(x)}. $$
In particular,   $\widetilde{L} i_K f (x) =  L_K^{(D)} f (x)$ for all $x\in K$, where $i_K :\ell^2 (K,m_K)\longrightarrow \ell^2 (V,m)$ is the canonical embedding by extension by zero.
\end{prop}
\begin{proof} Every linear operator on the finite dimensional $\ell^2 (K,m_K)$
  is bounded. Thus, we can directly read off the operator $L_K^{(D)}$ from the
  form $Q_K^{(D)}$ given by $Q_K^{(D)} (u) := Q (i_K u)$. This gives the first claim. The last statement follows easily.
\end{proof}

We now discuss  two results on solutions of the associated difference
equation. These results will be rather useful for our further
considerations. We start with a version of a minimum principle.

\begin{theorem} (Minimum principle) \label{minimum-principle}  Let  $(V,b,c)$ be a weighted graph and $m$ a measure on $V$ of full support.  Let $U\subseteq V$
  be given. Assume that the function  $u$ on  $V$ satisfies
\begin{itemize}
 \item $(\widetilde{L} + \alpha)  u \geq 0$ on  $U$ for some $\alpha>0$,
\item  The negative part $u^-_U:=u\vert_U\wedge0$ of the restriction of $u$ to $U$ attains its minimum,
\item $u \geq 0$ on  $V\setminus U$.
\end{itemize}
Then,   $u\equiv  0$ or $u>0$ on each connected component of $U$. In
particular,  $u \geq 0$.
\end{theorem}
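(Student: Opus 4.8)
The plan is to run a discrete minimum principle in two stages: first I would prove that $u \geq 0$ on all of $V$ by locating a global minimizer at which the hypothesis on $(\widetilde{L}+\alpha)u$ leads to a contradiction, and then I would upgrade non-negativity to the stated dichotomy by propagating zeros through neighbors along paths inside a connected component of $U$.

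For the first stage I would argue by contradiction, supposing that $u$ fails to be non-negative. Since $u \geq 0$ on $U^c$, any point with $u<0$ lies in $U$, so the negative part $u^-_U := u\mid_U \wedge 0$ is strictly negative somewhere; by the second hypothesis it attains its minimum, say the value $m_0 < 0$ at a point $x_0 \in U$. The crucial observation is that $x_0$ is then a global minimizer of $u$ over all of $V$: on $U$ we have $u \geq u^-_U \geq m_0$, while on $U^c$ we have $u \geq 0 > m_0$, and $u(x_0)=m_0$ because $u^-_U(x_0)=m_0<0$ forces $u(x_0)=m_0$. Consequently $u(x_0)-u(y)\leq 0$ for every $y$, so $\sum_y b(x_0,y)(u(x_0)-u(y))\leq 0$. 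Feeding $u(x_0)=m_0<0$ into $(\widetilde{L}+\alpha)u(x_0)$ and using $c\geq 0$, $m>0$, $\alpha>0$ then yields $(\widetilde{L}+\alpha)u(x_0)\leq \left(\frac{c(x_0)}{m(x_0)}+\alpha\right)m_0 < 0$, contradicting the first hypothesis. Hence $u\geq 0$ on $V$.

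For the second stage I would fix a connected component $C$ of $U$ and suppose $u$ is not strictly positive on $C$, so that $u(x_0)=0$ for some $x_0\in C$ (possible since $u\geq 0$). At such a point the terms $\frac{c(x_0)}{m(x_0)}u(x_0)$ and $\alpha u(x_0)$ vanish, so $(\widetilde{L}+\alpha)u(x_0)\geq 0$ reduces to $\sum_y b(x_0,y)u(y)\leq 0$. As $u\geq 0$ and $b(x_0,y)\geq 0$, every summand is non-negative, forcing each term to vanish, so $u(y)=0$ for every neighbor $y$ of $x_0$. Repeating this at neighbors lying in $C$ and using that any two points of $C$ are joined by a path within $U$, I would conclude by induction along such paths that $u\equiv 0$ on $C$. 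This establishes the dichotomy on each component, and the first stage supplies the final assertion $u\geq 0$.

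The step deserving the most care is the first stage: one must deploy all three hypotheses together to ensure that the minimizer $x_0$ of $u^-_U$ is a genuine global minimizer of $u$ over $V$ rather than merely over $U$, since it is exactly this global minimality that gives the off-diagonal sum $\sum_y b(x_0,y)(u(x_0)-u(y))$ a definite sign. I would also record that the relevant sums converge, which is implicit in $(\widetilde{L}+\alpha)u$ being defined on $U$, that is, in $u$ lying in the domain $\widetilde{F}$ of $\widetilde{L}$ at points of $U$, so that the manipulations above are legitimate.
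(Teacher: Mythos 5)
Your proof is correct and follows essentially the same route as the paper: evaluate $(\widetilde{L}+\alpha)u$ at a minimizer of $u$ over $U$ (which the three hypotheses make a global minimizer), use the sign of each term to force vanishing, and propagate zeros to neighbors along paths in a connected component. The only difference is organizational: the paper merges your two stages into a single sandwich $0 \leq \sum_y b(x_m,y)(u(x_m)-u(y)) + c(x_m)u(x_m) + \alpha u(x_m) \leq 0$ at the minimizer, which yields $u(x_m)=0$ and the vanishing at all neighbors simultaneously, whereas you first rule out a strictly negative minimum by contradiction and then run the zero-propagation separately.
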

\begin{proof} Without loss of generality we can assume $U$ is   connected. If $u>0$ there is nothing left to show.  It remains to consider the case that there exists $x \in U$ with  $u(x)\leq 0$. As the negative part of $u$ on $U$ attains its minimum, there exists then $x_m\in  U$ with $u(x_m) \leq 0$ and  $u(x_m)\leq u(y)$ for all $y\in U$.  As $u(y)\geq 0$ for $y\in U^c$, we obtain   $u(x_m) - u(y) \leq0$ for all  $y\in V$. By the supersolution assumption we find
$$ 0 \leq \sum b(x_m,y) (u(x_m) - u(y)) + c(x_m) u(x_m) + \alpha  m(x_m) u(x_m)  \leq 0.$$
As $b$ and $c$ are non-negative and $\alpha>0$, we find $0 = u(x_m)$ and
$u(y) = u(x_m) =0$ for all $y$ with $b(y,x_m)>0$. As   $U$  is connected, iteration of this argument shows $u\equiv 0 $ on $U$.
 \end{proof}

The following lemma will be a key tool in our investigations. Note that its proof is rather simple due to the discreteness of the underlying space.

\begin{lemma} (Monotone convergence of solutions) Let $\alpha \in \RR$,
  $f: V\lra \RR$ and $u: V\longrightarrow \R$ be given. Let
  $(u_n)$
  be a sequence of  non-negative functions on $V$ belonging  to the set $\widetilde{F}$ given in \eqref{ftilde} on which $\widetilde{L}$ is defined. Assume $u_n \leq u_{n+1}$ for all $n\in \N$,  and $u_n (x) \to u (x)$
  and $(\widetilde{L}+\alpha) u_n (x) \to f(x)$ for all $x\in V$. Then, $u$ belongs to $\widetilde{F}$ as well and  the equation
  $(\widetilde{L}+\alpha) u = f$ holds.
\end{lemma}
\begin{proof} Without loss of generality we assume $m\equiv 1$.
 By assumption
$$ (\widetilde{L} + \alpha)  u_n (x) = \sum_{y\in V} b(x,y) (u_n (x) -u_n (y)) +
(c(x)+\alpha)  u_n(x)$$
converges to $f(x)$ for any $x\in V$. As  $\sum_{y\in V} b(x,y) u_n
(x)$ converges increasingly to $u(x) \sum_{y\in V} b(x,y) <\infty$, the assumptions on $u_n$  show that $\sum_{y\in V} b(x,y)  u_n (y)$ must converge as well and  in fact must converge to $\sum_{y\in V} b(x,y) u(y)$ by monotone converges theorem. From this we easily
obtain the statement.
\end{proof}

We next discuss some fundamental properties of regular Dirichlet forms. These
properties do not depend on the graph setting.  They are true for general
Dirichlet forms and can, for example,  be found in the works
\cite{Sto,SV}. For the convenience of the reader we  include  short proofs
based on the previous  minimum principle.

\begin{prop} (Domain monotonicity)
Let $(V,b,c)$  be a weighted graph and $m$ a measure on $V$ of full support.  Let $K_1, K_2  \subseteq V$  finite  with  $K_1 \subseteq K_2$ be given. Then, for any $x\in K_1$
$$(L^{(D)}_{K1}+\alpha)^{-1}f(x)\leq(L^{(D)}_{K_2}+\alpha)^{-1}f(x)$$
for all $f\in \ell^2 (V,m)$ with $f\geq 0$ and $\supp f \subseteq K_1$.
\end{prop}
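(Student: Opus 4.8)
The plan is to compare the two resolvent solutions pointwise and apply the minimum principle proved above. Write $u_1 := (L^{(D)}_{K_1}+\alpha)^{-1} f$ and $u_2 := (L^{(D)}_{K_2}+\alpha)^{-1} f$, each regarded as a function on all of $V$ by extension by zero outside $K_1$, respectively $K_2$. Since $\supp f \subseteq K_1 \subseteq K_2$, both operators are fed the same data. The first step is to record the pointwise equations each $u_i$ satisfies: each $u_i$ lies in the domain of $L^{(D)}_{K_i}$, so by the identification of the generator as a restriction of the formal Laplacian (applied to the graph $(K_i, b^{(D)}_{K_i}, c^{(D)}_{K_i})$ over $(K_i,m_{K_i})$), together with the boundary identity $\widetilde{L}^{(D)}_{K_i} u = \widetilde{L} u$ on $K_i$ for $u$ extended by zero, the equation $(\widetilde{L}+\alpha) u_i = f$ holds pointwise on $K_i$. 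For $u_1$ this is immediate as $u_1$ is finitely supported; for $u_2$ it is the slightly less trivial part, relying on the general restriction result. Note that, in particular, $\widetilde{L} u_2(x)$ is well defined for every $x \in K_1 \subseteq K_2$, which is all that will be needed.

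Next I would set $w := u_2 - u_1$ and compute, using linearity of $\widetilde{L}$ at points of $K_1$ and the inclusion $K_1 \subseteq K_2$, that
$$(\widetilde{L}+\alpha) w = (\widetilde{L}+\alpha) u_2 - (\widetilde{L}+\alpha) u_1 = f - f = 0 \quad \text{on } K_1.$$
This sets up the minimum principle with $U = K_1$. I would then verify its three hypotheses for $w$: the supersolution inequality $(\widetilde{L}+\alpha) w \geq 0$ holds on $K_1$ (indeed with equality); the negative part of $w|_{K_1}$ attains its minimum, trivially, because $K_1$ is finite; and on $K_1^c$ one has $w = u_2 \geq 0$, since $u_1$ vanishes outside $K_1$ while $u_2 \geq 0$ everywhere because $f \geq 0$ and the resolvent $(L^{(D)}_{K_2}+\alpha)^{-1}$, being the resolvent of a Dirichlet form, is positivity preserving. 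The minimum principle then gives $w \geq 0$ on all of $V$, hence $u_1 \leq u_2$ on $K_1$, which is precisely the assertion.

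The argument is short; there is no genuine obstacle, only two points deserving care. The first is the domain bookkeeping guaranteeing that $(\widetilde{L}+\alpha) u_2 = f$ holds pointwise on $K_2$ (and hence on $K_1$): this rests on the earlier identification of the generator as a restriction of $\widetilde{L}$ together with the boundary identity for $\widetilde{L}^{(D)}_{K_2}$, and one should note that only finiteness of $\sum_y |b(x,y) u_2(y)|$ at points $x \in K_1$ is actually invoked. The second is the positivity of $u_2$ on $K_1^c$, which I would deduce directly from the positivity-preserving property of the Dirichlet-form resolvent rather than from a second application of the minimum principle (the latter would require an argument that the negative part attains its minimum on the possibly infinite set $K_2$). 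With these in hand everything reduces to linearity and the single application of the minimum principle.
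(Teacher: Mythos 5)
Your proof is correct and takes essentially the same route as the paper's: set $w := u_2 - u_1$ (both extended by zero), note $(\widetilde{L}+\alpha)w = f - f = 0$ on $K_1$, and apply the minimum principle with $U = K_1$, using finiteness of $K_1$ for the attained minimum and $w = u_2 \geq 0$ on $K_1^c$. The only difference is that you make explicit the bookkeeping the paper leaves implicit, namely the justification of the pointwise equation $(\widetilde{L}+\alpha)u_2 = f$ on a possibly infinite $K_2$ via the generator-restriction result for the graph $(K_2, b^{(D)}_{K_2}, c^{(D)}_{K_2})$, and the positivity of $u_2$ via the positivity-preserving property of the Dirichlet-form resolvent.
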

\begin{proof}Consider $f\in \ell^2 (V,m)$ with $f\geq 0$ and $\supp f \subseteq  K_1$ and define $u_i :=(L^{(D)}_{K_i} + \alpha)^{-1} f$, $i=1,2$. Extending   $u_i$ by zero we can assume that $u_i$ are defined on the whole of  $V$. Then,
$$ (\widetilde{L} + \alpha) u_i = f\:\;\mbox{on $K_i$} $$
for $i=1,2$. Therefore,  $w:= u_2 - u_1$ satisfies
\begin{itemize}
\item $w = u_2 \geq 0$ on $K_1^c$.
\item The negative part of $w$ attains its minimum on $K_1$ (as $K_1$ is finite).
\item $(\widetilde{L} + \alpha) w = f - f = 0$ on $K_1$.
\end{itemize}
The minimum principle yields $w\geq 0$ on  $V$.
\end{proof}

Regularity is crucial for the proof of the following result.

\begin{prop}\label{convergence}(Convergence of resolvents/semigroups)
Let $(V,b,c)$  be a weighted graph, $m$ a measure on $V$ with full support  and $Q$ the associated regular Dirichlet  form. Let   $(K_n)$ be an increasing  sequence of finite  subsets  of $V$ with $V =\bigcup K_n$.   Then,  $  (L_{K_n}^{(D)} + \alpha)^{-1}  f \to (L+\alpha)^{-1} f, \;\: n\to \infty$ for any $f\in \ell^2 (K_1,m_{K_1})$. (Here, $  (L_{K_n}^{(D)} + \alpha)^{-1}  f $ is
extended by zero to  all of $V$). The corresponding statement also holds for the semigroups.
\end{prop}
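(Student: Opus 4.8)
The plan is to prove the resolvent statement first and to deduce the semigroup statement at the end from strong resolvent convergence. By linearity it suffices to treat $0\le f\in\ell^2(K_1,m_{K_1})$, regarded as a function on $V$ supported in $K_1$. Put $u_n:=(L_{K_n}^{(D)}+\alpha)^{-1}f$, extended by zero to $V$. The first step is to produce a pointwise limit. Domain monotonicity (the preceding proposition, whose minimum-principle proof in fact gives $u_n\le u_{n+1}$ on all of $V$) shows that $(u_n)$ is nondecreasing, while the markovian property of the resolvents of the Dirichlet forms $Q_{K_n}^{(D)}$ yields the uniform bound $0\le u_n\le\alpha^{-1}\|f\|_\infty$. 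Hence $u_n\nearrow u$ pointwise for some bounded non-negative $u$.

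The second step upgrades this to $\ell^2$-convergence together with membership in the form domain. Since $u_n\in C_c(K_n)\subseteq D(Q)$ and $(L_{K_n}^{(D)}+\alpha)u_n=f$ on $K_n\supseteq\supp f$, the energy identity
$$ Q(u_n)+\alpha\|u_n\|^2=\langle(L_{K_n}^{(D)}+\alpha)u_n,u_n\rangle=\langle f,u_n\rangle $$
holds, and its right-hand side is at most $\alpha^{-1}\|f\|_\infty\sum_{x\in\supp f}m(x)|f(x)|$, a bound uniform in $n$ because $f$ is finitely supported. This bounds $\|u_n\|$ and $Q(u_n)$ uniformly; monotone convergence then gives $u\in\ell^2(V,m)$ with $u_n\to u$ in $\ell^2$, and the lower semicontinuity of $Q$ (Lemma \ref{closable}) gives $u\in D(Q)$ with $Q(u)\le\liminf_n Q(u_n)$.

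The third and decisive step is to identify $u$ with $(L+\alpha)^{-1}f$. For any $v\in C_c(V)$ and all $n$ large enough that $\supp v\subseteq K_n$, the form identity reads
$$ Q(u_n,v)+\alpha\langle u_n,v\rangle=\langle f,v\rangle . $$
As $(u_n)$ is bounded in the form Hilbert space $(D(Q),Q(\cdot)+\|\cdot\|^2)$ and converges to $u$ in $\ell^2$, it converges to $u$ weakly in that Hilbert space; passing to the limit (using the $\ell^2$-convergence for the inner-product term) yields $Q(u,v)+\alpha\langle u,v\rangle=\langle f,v\rangle$ for every $v\in C_c(V)$. Here regularity enters as the crux: since $C_c(V)$ is form-dense in $D(Q)$, this identity extends by continuity to all $v\in D(Q)$, which is exactly the statement that $u\in D(L)$ with $(L+\alpha)u=f$, i.e. $u=(L+\alpha)^{-1}f$. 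I expect this identification to be the main obstacle: without regularity the monotone limit $u$ could in principle be a strictly larger bounded solution of the equation $(\widetilde{L}+\alpha)u=f$ — precisely the stochastic-incompleteness phenomenon of Theorem \ref{main0} — and it is only the form-density of $C_c(V)$ that rules this out.

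Finally, running the same argument with $K_1$ replaced by an arbitrary $K_N$ shows $u_n\to(L+\alpha)^{-1}f$ in $\ell^2$ for every $f$ supported in some $K_N$, hence on the dense set $C_c(V)$; since the resolvents are uniformly bounded by $\alpha^{-1}$, this is strong resolvent convergence $L_{K_n}^{(D)}\to L$. For non-negative self-adjoint operators strong resolvent convergence at one $\alpha>0$ propagates through the continuous functional calculus, giving $e^{-tL_{K_n}^{(D)}}\to e^{-tL}$ strongly for each $t\ge0$ and thus the asserted statement for the semigroups.
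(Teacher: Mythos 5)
Your proof is correct, and it shares the paper's skeleton --- reduce to $f\ge 0$, use domain monotonicity and the markovian bound to obtain a monotone pointwise limit $u$, upgrade to $\ell^2$-convergence, and let regularity do the decisive work --- but the identification of $u$ with $(L+\alpha)^{-1}f$ runs along a genuinely different line. The paper's proof is variational: each $u_n$ is the unique minimizer of $v\mapsto Q_{K_n}^{(D)}(v)+\alpha\|v-\frac{1}{\alpha}f\|^2$, and closedness (lower semicontinuity) of $Q$ together with the identity $Q_{K_n}^{(D)}(w)=Q(w)$ for $w\in C_c(V)$ with $\supp w\subseteq K_n$ yields $Q(u)+\alpha\|u-\frac{1}{\alpha}f\|^2\le Q(w)+\alpha\|w-\frac{1}{\alpha}f\|^2$ for every $w\in C_c(V)$; regularity then extends this to all competitors in $D(Q)$, so $u$ is the minimizer and hence the resolvent. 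You instead extract the uniform energy bound $Q(u_n)+\alpha\|u_n\|^2=\langle f,u_n\rangle\le C$, pass to the limit in the weak formulation $Q(u_n,v)+\alpha\langle u_n,v\rangle=\langle f,v\rangle$ via weak compactness in the form Hilbert space, and conclude with the representation theorem for closed forms; regularity enters at exactly the same spot, to enlarge the test-function class from $C_c(V)$ to $D(Q)$. The two identifications are dual to one another --- your weak equation is the Euler--Lagrange equation of the paper's minimization problem --- and both are complete; the paper's route avoids weak-compactness machinery entirely (only lower semicontinuity plus comparison of functional values), while yours produces the a priori form bound and the weak formulation as reusable intermediate facts, and your remark that without regularity $u$ could be a strictly larger bounded solution of $(\widetilde{L}+\alpha)u=f$ pinpoints correctly why the hypothesis cannot be dropped. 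On the semigroups you are more explicit than the paper, which only invokes ``general principles''; your Stone--Weierstrass/functional-calculus deduction is sound, the one bookkeeping point worth recording being that the operators act on varying spaces, so one should work with the zero-extended operators $R_n=i_{K_n}(L_{K_n}^{(D)}+\alpha)^{-1}p_{K_n}$ and use $p_{K_n}i_{K_n}=\mathrm{id}$ to see that $\phi\mapsto i_{K_n}\phi(L_{K_n}^{(D)})p_{K_n}$ is multiplicative, whence strong convergence of $R_n$ propagates to $i_{K_n}e^{-tL_{K_n}^{(D)}}p_{K_n}\to e^{-tL}$ strongly for each $t\ge 0$.
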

\begin{proof} By general principles (see e.g. \cite[Satz 9.20b]{Wei}) it suffices to consider the resolvents.   After decomposing $f$ in positive and negative part, we can
  restrict attention to $f\geq 0$. Define  $u_n := (L_{K_n}^{(D)} +
  \alpha)^{-1}   f$. Then, $u_n\geq 0$.
Now, by standard  characterization of resolvents (see e.g. Section~$1.4$ in \cite{Fuk}), $u_n$ is the unique minimizer of
$$Q_{K_n} (u) + \alpha \| u - \frac{1}{\alpha} f\|^2.$$
By domain monotonicity, the sequence  $(u_n (x))$  is   monotonously increasing for any $x\in V$. Moreover, by standard results on   Dirichlet forms, (see e.g. \cite[Theorem 1.4.1]{Fuk}) we have $u_n  \leq  \frac{1}{\alpha} \|f\|_\infty$ and by the spectral theorem $\|u_n\|\leq  \frac{1}{\alpha} \|f\|$.  Thus,  the sequence $u_n$ converges pointwise and in $\ell^2 (V,m)$ towards a function $u\in \ell^2 (V,m)$. Let now $w\in C_c (V)$ be arbitrary. Assume without loss of generality that the support of $w$ is contained in $K_1$. Then, $Q(w) = Q_{K_n} (w)$ for all $n\in \N$.
Closedness of $Q$, convergence of the $(u_n)$ and the minimizing property of each $u_n$ then  give
\begin{eqnarray*}
Q(u) + \alpha \| u - \frac{1}{\alpha} f\|^2 &\leq & \liminf_{n\to \infty} Q
(u_n) + \alpha \| u - \frac{1}{\alpha} f\|^2\\
&=& \liminf_{n\to \infty} \left(  Q(u_n) + \alpha \| u_n - \frac{1}{\alpha} f\|^2\right)\\
&=& \liminf_{n\to \infty}  \left( Q_{K_n} (u_n)  + \alpha \| u_n - \frac{1}{\alpha} f\|^2\right)\\
&\leq &\liminf_{n\to\infty} \left( Q_{K_n} (w) + \alpha \| w - \frac{1}{\alpha} f\|^2\right)\\
&=& Q (w) + \alpha \| w - \frac{1}{\alpha} f\|^2.
\end{eqnarray*}
As $w\in C_c (V)$ is arbitrary and $Q$ is regular (!), this implies
$$  Q(u) + \alpha \| u - \frac{1}{\alpha} f\|^2 \leq Q(v) + \alpha \| v -
\frac{1}{\alpha} f\|^2$$
for any $v\in D(Q)$. Thus, $u$ is a minimizer of
$$ Q(u) + \alpha \| u - \frac{1}{\alpha} f\|^2.$$
By
  characterization of resolvents again,  $u$ must then be equal to $(L +
  \alpha)^{-1} f$.
\end{proof}

We can use the previous result to connect the operator $L$ to the formal operator
$\widetilde{L}$.  To do so we need one  further result.

\begin{lemma}\label{lzwei}  Let $(V,m)$ be given and $(b,c)$ a weighted graph over $V$.
Let $p\in [1,\infty]$ be given.  For any $g\in \ell^p (V,m)$, the function $u:=(L_p +
  \alpha)^{-1} g$ belongs to the set $\widetilde{F}$ given in \eqref{ftilde} on which $\widetilde{L}$ is defined and solves $(\widetilde{L} + \alpha )u = g$.
\end{lemma}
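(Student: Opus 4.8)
The plan is to establish the statement first for non-negative data and then to reduce the general case to it by two successive monotone approximations, each time applying the Monotone convergence of solutions lemma. Since $(L_p+\alpha)^{-1}$ is linear and $\widetilde{L}$ is linear on the vector space $\widetilde{F}$ of \eqref{ftilde}, writing $g = g^+ - g^-$ with $g^\pm \in \ell^p(V,m)$ reduces everything to the case $g \geq 0$, which I assume from now on.

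I would first treat finitely supported $h \geq 0$. As such $h$ lies in every $\ell^q(V,m)$, consistency of the resolvents gives $(L_p+\alpha)^{-1}h = (L_2+\alpha)^{-1}h =: v$. Fix an increasing exhaustion $V = \bigcup_n K_n$ by finite sets with $\supp h \subseteq K_1$ and set $v_n := (L_{K_n}^{(D)}+\alpha)^{-1}h$, extended by zero. The Convergence of resolvents proposition gives $v_n \to v$ pointwise, Domain monotonicity gives $v_n \leq v_{n+1}$, and each $v_n$ is non-negative and bounded by $\alpha^{-1}\|h\|_\infty$. Moreover, the identity $\widetilde{L}\, i_{K_n}(v_n) = L_{K_n}^{(D)} v_n$ on $K_n$ shows $(\widetilde{L}+\alpha)v_n(x) = h(x)$ for $x \in K_n$, hence for each fixed $x$ and all large $n$. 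The Monotone convergence of solutions lemma then yields $v \in \widetilde{F}$ and $(\widetilde{L}+\alpha)v = h$.

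For general $g \geq 0$ in $\ell^p$ I would truncate, letting $g_n$ equal $g$ on $K_n$ and $0$ elsewhere, so that $g_n \uparrow g$ with each $g_n$ finitely supported. Put $u_n := (L_p+\alpha)^{-1}g_n$. The previous step gives $u_n \in \widetilde{F}$ with $(\widetilde{L}+\alpha)u_n = g_n$; positivity preservation makes $(u_n)$ non-negative and increasing, while the Markovian bound $\|u_n\|_\infty \leq \alpha^{-1}\|g_n\|_\infty$ makes each $u_n$ bounded. Since $(\widetilde{L}+\alpha)u_n(x) = g_n(x) \to g(x)$ for every $x$, it only remains to verify that $u_n \to u := (L_p+\alpha)^{-1}g$ pointwise, after which the Monotone convergence of solutions lemma again delivers $u \in \widetilde{F}$ and $(\widetilde{L}+\alpha)u = g$. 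For $p < \infty$ this is immediate, since $g_n \to g$ in $\ell^p(V,m)$ forces $u_n \to u$ in $\ell^p(V,m)$ and hence pointwise.

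The main obstacle is the endpoint $p = \infty$, where finitely supported functions are not norm dense, so the $\ell^p$ argument for $u_n \to u$ fails. Here I would argue by duality: $(L_\infty+\alpha)^{-1}$ is the adjoint of the positivity-preserving resolvent $(L_1+\alpha)^{-1}$, so pairing against $\delta_x \in \ell^1(V,m)$ gives $m(x)\, u_n(x) = \langle g_n, (L_1+\alpha)^{-1}\delta_x\rangle$. As $(L_1+\alpha)^{-1}\delta_x \geq 0$ and $g_n \uparrow g$, the monotone convergence theorem yields $u_n(x) \uparrow u(x)$ for each $x$. This supplies the missing pointwise convergence, and the Monotone convergence of solutions lemma completes this final case as above.
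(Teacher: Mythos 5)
Your proof is correct and follows essentially the same route as the paper: reduction to non-negative data, truncation to finitely supported functions, finite-volume Dirichlet resolvents combined with domain monotonicity and convergence of resolvents, and the monotone convergence of solutions lemma to pass back up to general $g$. The only difference is that you justify the pointwise convergence $u_n \to u$ at the endpoint $p=\infty$ by duality against the $\ell^1$ resolvent, a step the paper's proof asserts without comment (``Then, $u_n := (L_p+\alpha)^{-1}g_n$ converges to $u$''); this is a careful filling-in of an implicit step rather than a different approach.
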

\begin{proof} We first consider the case $p=2$. If suffices to consider the case $f\geq 0$. Choose an increasing
  sequence $(K_n)$ of finite subsets of $ V$ with $\bigcup K_n = V$ and let $g_n$
  be the restriction of $g$ to $K_n$. Then, $(g_n)$ converges monotonously increasing to $g$ in
  $\ell^2 (V,m)$ and consequently $(L + \alpha)^{-1} g_n$
  converges monotonously increasing to $u$. Thus, by monotone convergence of
  solutions,  we can assume without loss of generality that
   $g$ has  compact support contained in $K_1$. By convergence of resolvents,
  $u_n:= (L_{K_n}^{(D)} + \alpha)^{-1}  g$ then converges increasingly to  $ u:=(L +\alpha)^{-1} g.$
Moreover, by Proposition \ref{finite-restriction},
$ u_n$ satisfies $(\widetilde{L} + \alpha) u_n = g$ on $K_n$. Thus, the statement follows
by monotone convergence of solutions.

We now turn to general $p\in [1,\infty]$.  Again, it suffices to consider the case $g\geq 0$. Choose an increasing
  sequence $(K_n)$ of finite subsets of $ V$ with $\bigcup K_n = V$ and let $g_n$
  be the restriction of $g$ to $K_n$. Then, $ u_n := (L_p + \alpha)^{-1} g_n$
  converges to $u$. Moreover, as $g_n$  belongs to $\ell^2 (V,m)$ consistency of the
  resolvents gives $ u_n = (L + \alpha)^{-1} g_n$. Now, on the $\ell^2
  (V,m)$ level we can apply the considerations for $p=2$  to obtain  $$(\widetilde{L} + \alpha) u_n = (\widetilde{L} + \alpha)(L + \alpha)^{-1} g_n = g_n.$$
Taking monotone limits now yields the statement.
\end{proof}

After these preparations, we  can now give the desired information on the generators.


\begin{theorem}\label{obermenge} Let $(V,b,c)$ be a weighted graph and $m$ a  measure on $V$ of  full support. Let  $p\in [1,\infty]$ be given.  Then,  $L_p f = \widetilde{L} f$ for any $f\in D(L_p)$.
\end{theorem}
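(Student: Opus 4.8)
The plan is to derive the theorem directly from Lemma \ref{lzwei}, which has already done the genuine analytic work (the monotone-convergence-of-solutions argument). The statement reduces to the observation that the resolvent $(L_p+\alpha)^{-1}$ inverts $\widetilde{L}+\alpha$ on its range, and that this range is all of $\ell^p(V,m)$.

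Concretely, I would fix any $\alpha>0$ and an arbitrary $f\in D(L_p)$. Set $g:=(L_p+\alpha)f$, which lies in $\ell^p(V,m)$ since $f\in D(L_p)$. Because $\alpha>0$ and $L_p$ is (the generator of a contraction semigroup, hence) such that $(L_p+\alpha)$ is invertible with inverse $(L_p+\alpha)^{-1}$, we have $f=(L_p+\alpha)^{-1}g$. Now Lemma \ref{lzwei} applies verbatim to $g$: it tells us that $f=(L_p+\alpha)^{-1}g$ belongs to $\widetilde{F}$, so that $\widetilde{L}f$ is defined, and moreover that $(\widetilde{L}+\alpha)f=g$. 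Comparing the two expressions for $g$ gives
$$\widetilde{L}f+\alpha f = g = L_p f+\alpha f,$$
and subtracting $\alpha f$ from both sides yields $\widetilde{L}f=L_p f$. Since $f\in D(L_p)$ was arbitrary, this is exactly the claim.

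I do not expect any real obstacle here, precisely because the difficulty has been front-loaded into Lemma \ref{lzwei} and into the convergence-of-resolvents proposition preceding it. The only points worth stating carefully are that $(L_p+\alpha)$ is a bijection of $D(L_p)$ onto $\ell^p(V,m)$ for $\alpha>0$ (so that writing $f=(L_p+\alpha)^{-1}g$ is legitimate and every $f\in D(L_p)$ is reached this way), and that the computation is valid for the full range $p\in[1,\infty]$ — but this is automatic because Lemma \ref{lzwei} was itself proved for all $p\in[1,\infty]$. Thus the proof is essentially a one-line consequence once the resolvent identity of Lemma \ref{lzwei} is in hand.
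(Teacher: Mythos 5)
Your proposal is correct and is essentially identical to the paper's own proof: the paper likewise sets $g:=(L_p+\alpha)f$, writes $f=(L_p+\alpha)^{-1}g$, and invokes Lemma \ref{lzwei} to conclude $(\widetilde{L}+\alpha)f=g=(L_p+\alpha)f$. Your only addition is spelling out the invertibility of $L_p+\alpha$, which the paper leaves implicit.
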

\begin{proof}
 Let $f\in D(L_p)$ be given. Then, $g:=(L_p + \alpha)f$
  exists and belongs to $\ell^p (V,m)$. By the previous lemma,  $f= (L_p +\alpha)^{-1} g$ solves
$$ (\widetilde{L} + \alpha) f = g = (L_p + \alpha)f$$
and we infer the statement.
\end{proof}

We also note the following by product of our investigation (see \cite{Web, Woj,Dav3} for this result for locally finite graphs).

\begin{coro}\label{positivityimproving} (Positivity improving)  Let $(V,b,c)$ be a  connected weighted graph and $L$ the associated operator. Then, both the semigroup $e^{-t L}$, $t\geq 0$, and the resolvent $(L + \alpha)^{-1}$, $\alpha>0$, are positivity improving (i.e., they map non-negative nontrivial $\ell^2$-functions to strictly positive functions).
\end{coro}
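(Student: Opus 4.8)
The plan is to handle the resolvent first and then bootstrap to the semigroup. For the resolvent, fix $\alpha>0$ and let $f\in\ell^2(V,m)$ with $f\geq 0$ and $f\neq 0$, and set $u:=(L+\alpha)^{-1}f$. Since the resolvent is positivity preserving (standard theory, as recalled in Section \ref{Framework}) we have $u\geq 0$, and by Lemma \ref{lzwei} the function $u$ lies in $\widetilde{F}$ and solves $(\widetilde{L}+\alpha)u=f$. Suppose $u(x_0)=0$ for some $x_0\in V$; then $x_0$ is a global minimum of $u$, and evaluating the equation at $x_0$ gives
$$ f(x_0)=(\widetilde{L}+\alpha)u(x_0)=-\frac{1}{m(x_0)}\sum_{y}b(x_0,y)u(y)\leq 0. $$
As $f(x_0)\geq 0$, both sides vanish, and since $b(x_0,y)\geq 0$ and $u(y)\geq 0$ we conclude $u(y)=0$ for every neighbour $y$ of $x_0$. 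Hence $\{u=0\}$ is closed under passing to neighbours; by connectedness of $(V,b,c)$ it is either empty or all of $V$, and the latter forces $f=(\widetilde{L}+\alpha)u=0$, contradicting $f\neq 0$. Therefore $u>0$ everywhere, i.e. $(L+\alpha)^{-1}$ is positivity improving.

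For the semigroup I would pass to the heat kernel. Since $V$ is discrete, $e^{-tL}$ has a symmetric kernel $p_t(x,y)=\frac{1}{m(x)m(y)}\langle e^{-tL}\mathbf 1_x,\mathbf 1_y\rangle$ with $p_t\geq 0$ by positivity preservation, and $(e^{-tL}f)(x)=\sum_y m(y)\,p_t(x,y)f(y)$ for $f\geq 0$; thus it suffices to prove $p_t(x,y)>0$ for all $t>0$ and all $x,y\in V$. Three observations drive the argument. First, $p_t(y,y)=\frac{1}{m(y)^2}\aV{e^{-(t/2)L}\mathbf 1_y}^2>0$ for every $t>0$, since the spectral multiplier $e^{-(t/2)\lambda}$ is strictly positive, so $e^{-(t/2)L}$ is injective. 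Second, the Chapman--Kolmogorov identity $p_{s+r}(x,y)=\sum_z m(z)\,p_s(x,z)p_r(z,y)\geq m(y)\,p_s(x,y)\,p_r(y,y)$ together with $p_r(y,y)>0$ shows that $p_s(x,y)>0$ implies $p_{s+r}(x,y)>0$ for all $r>0$; equivalently, the zero set $\{t>0:p_t(x,y)=0\}$ is a down-closed initial interval. Third, $t\mapsto p_t(x,y)$ is the restriction to the positive axis of the function $z\mapsto\langle e^{-zL}\mathbf 1_x,\mathbf 1_y\rangle/(m(x)m(y))$, which is holomorphic on $\{\operatorname{Re}z>0\}$ because $L$ is self-adjoint and non-negative; hence it is real-analytic on $(0,\infty)$.

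Combining these, if $p_{t_0}(x,y)=0$ for some $t_0>0$, then by down-closedness $p_t(x,y)=0$ on the whole interval $(0,t_0)$, and real-analyticity forces $p_t(x,y)\equiv 0$ on $(0,\infty)$. But then the resolvent kernel would satisfy $\int_0^\infty e^{-\alpha t}p_t(x,y)\,dt=0$, contradicting the positivity-improving property of $(L+\alpha)^{-1}$ established above. Hence $p_t(x,y)>0$ for all $t>0$, which completes the proof. The main obstacle is exactly this final upgrade from the resolvent to the semigroup: one cannot argue pointwise in $t$ near $0$, because the natural device of differentiating $t\mapsto(e^{-tL}\mathbf 1_y)(x)$ at $t=0$ is unavailable in our generality (the functions $\mathbf 1_y$ need not lie in $D(L)$). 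I route around this through the interplay of Chapman--Kolmogorov, which gives the zero set an interval structure, and analyticity, which promotes an interval of zeros to identical vanishing, with the resolvent supplying the required non-triviality.
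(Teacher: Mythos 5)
Your proposal is correct. Its first half coincides with the paper's actual proof: the paper also reduces everything to the resolvent, takes $u=(L+\alpha)^{-1}f\geq 0$, invokes Lemma \ref{lzwei} to get $(\widetilde{L}+\alpha)u=f$, and then applies its minimum principle (with $U=V$) — whose proof is exactly your inline argument that a zero of $u$ propagates to all neighbours and hence, by connectedness, to all of $V$, forcing $f\equiv 0$. The genuine difference lies in the semigroup step: the paper dismisses it with the single sentence ``by general principles it suffices to consider the resolvent,'' implicitly appealing to the standard equivalence (Reed--Simon type) between positivity improvement of the resolvent and of a positivity preserving self-adjoint semigroup, whereas you prove that transfer in full. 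Your route — discreteness gives a kernel $p_t(x,y)\geq 0$; strict positivity on the diagonal from injectivity of $e^{-(t/2)L}$; Chapman--Kolmogorov making the zero set of $t\mapsto p_t(x,y)$ an initial interval; real-analyticity in $t$ (from non-negativity and self-adjointness of $L$) promoting an interval of zeros to identical vanishing; and the Laplace transform $\int_0^\infty e^{-\alpha t}p_t(x,y)\,dt$ linking back to the already-established resolvent positivity — is a correct and complete rendering of precisely those ``general principles,'' adapted cleanly to the discrete setting. What your version buys is self-containedness (and a correct diagnosis of why naive arguments near $t=0$ fail, since $\delta_y$ need not lie in $D(L)$ here); what the paper's version buys is brevity, at the price of an unproved appeal to semigroup folklore.
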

\begin{proof} By general principles it suffices to consider the resolvent.
Let $f\in\ell^2(V,m)$ with  $f\geq 0$ be given and consider $u:= (L + \alpha)^{-1} f$.  Then
  $u\geq 0$ as the resolvent of a Dirichlet form is positivity preserving.
 If
  $u$ is not strictly positive, there exists an $x$ with $u(x) =0$. As $u$ is
  non-negative, $u$ attains its minimum in $x$. By Lemma \ref{lzwei}, $u$ satisfies $(\widetilde{L}+ \alpha) u= f\geq 0$. We can therefore apply the
 minimum principle
  (with $U=V$) to obtain  that $u\equiv 0$. This implies $f\equiv 0$.
\end{proof}

\section{Generators of the semigroups on $\ell^p$ and essential
  selfadjointness on $\ell^2$} \label{Essential}
In this section we will consider a symmetric weighted graph $(V,b,c)$ and a measure $m$ on $V$ of full support. We will  be concerned with explicitly describing the generators of the semigroups on $\ell^p$ and  studying essential selfadjointness of the
generator on $\ell^2$. Both issues will be tackled by proving uniqueness of
solutions on the corresponding $\ell^p$ spaces.  The results of this section
are not needed to deal with stochastic completeness.

Recall the  geometric assumption introduced in the first section:
\begin{itemize}
\item[$(A)$] The equality $\sum_{n\in \NN} m (x_n) = \infty$ holds for any sequence $(x_n)$ of elements of $V$ such that $b(x_n, x_{n+1})>0$ for all $n\in \NN$ .
    \end{itemize}

The relevance of $(A)$ comes from the following variant of the minimum principle:

\begin{prop} Assume $(A)$. Let $\alpha>0$, $p\in [1,\infty)$
  and $u\in \ell^p (V,m)$ with $(\widetilde{L} + \alpha) u \geq 0$ be
  given. Then, $u\geq 0$.
\end{prop}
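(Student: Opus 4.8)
The plan is to argue by contradiction and to use condition $(A)$ as a substitute for the hypothesis ``the negative part attains its minimum'' that was needed in the minimum principle of Section \ref{Dirichlet}. Suppose that $u(x_0)<0$ for some $x_0\in V$. The idea is to follow $u$ downhill: I will construct an infinite path $x_0,x_1,x_2,\ldots$ of pairwise distinct vertices with $b(x_n,x_{n+1})>0$ along which $u$ is strictly decreasing and stays below $u(x_0)<0$. Condition $(A)$ then forces $m(\{x_n:n\in\NN\})=\infty$, and since $|u(x_n)|\geq |u(x_0)|>0$ along the path this contradicts $u\in \ell^p(V,m)$ for $p<\infty$, because
$$\sum_{x\in V} m(x)|u(x)|^p \geq \sum_{n} m(x_n)|u(x_n)|^p \geq |u(x_0)|^p \sum_n m(x_n)=\infty.$$

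The engine of the construction is a one-step descent statement: whenever $u(x)<0$, the supersolution inequality $(\widetilde{L}+\alpha)u(x)\geq 0$ produces a neighbor $y$ of $x$ with $u(y)<u(x)$. To see this, multiply the inequality by $m(x)>0$ to obtain
$$\sum_y b(x,y)\bigl(u(x)-u(y)\bigr) + c(x)u(x) + \alpha\, m(x)u(x) \geq 0.$$
Here I use implicitly that $u\in\widetilde F$, which is part of the hypothesis that $\widetilde{L}u$ is defined, so the sum converges absolutely. Since $c(x)\geq 0$ and $u(x)<0$ we have $c(x)u(x)\leq 0$, and since $\alpha>0$ and $m(x)>0$ we have $\alpha\, m(x)u(x)<0$; hence
$$\sum_y b(x,y)\bigl(u(x)-u(y)\bigr)\geq -c(x)u(x)-\alpha\, m(x)u(x) > 0.$$
A sum of terms $b(x,y)\bigl(u(x)-u(y)\bigr)$ with $b(x,y)\geq 0$ can only be strictly positive if some summand is positive, so there exists $y$ with $b(x,y)>0$ and $u(x)-u(y)>0$, i.e. $u(y)<u(x)$. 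Note the strict sign here relies crucially on $\alpha>0$: this is exactly where strictness, rather than mere supersolution at the boundary, enters.

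Starting from $x_0$ and applying this step repeatedly — each $x_n$ still satisfies $u(x_n)\leq u(x_0)<0$, so the hypothesis of the descent step is preserved — yields the desired path with $u(x_{n+1})<u(x_n)$. Because $u$ is strictly decreasing along the path, the vertices $x_n$ are automatically pairwise distinct, which is precisely what is required to invoke $(A)$. I expect the main obstacle to be purely bookkeeping rather than conceptual: one must confirm that the descent never stalls (an isolated vertex, or more generally any vertex with $u(x)<0$, would already violate $(\widetilde{L}+\alpha)u\geq 0$ in the same computation, so the path extends indefinitely) and that the distinctness of the $x_n$ legitimately turns $(A)$ into the divergence of $\sum_n m(x_n)$. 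With these points checked, the $\ell^p$ estimate above closes the contradiction and gives $u\geq 0$.
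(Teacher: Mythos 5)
Your proof is correct and is essentially identical to the paper's argument: both derive a strict-descent step from the supersolution inequality at a vertex where $u<0$, iterate it to get an infinite path of pairwise distinct vertices on which $|u|$ stays bounded below by $|u(x_0)|>0$, and then use $(A)$ to contradict $u\in\ell^p(V,m)$. Your write-up is in fact slightly more careful than the paper's, which leaves implicit both the pairwise distinctness of the $x_n$ and the final $\ell^p$ estimate.
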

\begin{proof} Assume the contrary. Then, there exists an $x_0\in V$ with
  $u(x_0)<0$. By
$$ 0\leq (\widetilde{L} + \alpha) u (x_0) = \frac{1}{m (x_0)}\sum_{y\in V}
b(x_0, y)
(u(x_0) - u(y)) + \frac{c(x_0)}{m(x_0)} u(x_0) + \alpha u (x_0)$$
there must exist an $x_1$ connected to $x_0$ with $u(x_1)< u(x_0)$. Continuing in this way, we obtain a sequence $(x_n)$ of connected
 points with $u(x_n) < u(x_0)<0$. Combining this with $(A)$, we obtain a
 contradiction to $u\in \ell^p (V,m)$.
\end{proof}

Let us note the following consequence of the previous minimum principle.

\begin{lemma} \label{vanishingonlp} (Uniqueness of solutions on $\ell^p$)
  Assume $(A)$. Let $\alpha>0$, $p\in [1,\infty)$
  and $u\in \ell^p (V,m)$ with $(\widetilde{L} + \alpha) u =0$ be
  given. Then, $u\equiv 0$.
\end{lemma}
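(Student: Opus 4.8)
The plan is to deduce this directly from the immediately preceding proposition (the $\ell^p$ version of the minimum principle), applying it twice. The key observation is that $\widetilde{L}$ is a \emph{linear} operator on the vector space $\widetilde{F}$ defined in \eqref{ftilde}, so that if $u$ solves the equation $(\widetilde{L} + \alpha) u = 0$, then $-u$ lies in $\widetilde{F}$ as well and satisfies $(\widetilde{L} + \alpha)(-u) = -(\widetilde{L} + \alpha) u = 0$. Both $u$ and $-u$ therefore satisfy the hypotheses of the preceding proposition, since $0 \geq 0$ and, trivially, $-u \in \ell^p (V,m)$ whenever $u \in \ell^p (V,m)$.

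First I would invoke the proposition for $u$ itself: from $(\widetilde{L} + \alpha) u = 0 \geq 0$ and $u \in \ell^p (V,m)$ we obtain $u \geq 0$. Next I would invoke the very same proposition for $-u$: from $(\widetilde{L} + \alpha)(-u) = 0 \geq 0$ and $-u \in \ell^p (V,m)$ we obtain $-u \geq 0$, that is $u \leq 0$. Combining the two inequalities $u \geq 0$ and $u \leq 0$ forces $u \equiv 0$, which is the assertion.

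Since the argument is just a double application of an already-established result, there is essentially no obstacle to overcome; the entire content has been packaged into the preceding proposition, whose proof in turn rests on condition $(A)$ to rule out the descending chain $u(x_n) < u(x_0) < 0$ along an infinite simple path of finite measure. The only point worth checking explicitly is the elementary one flagged above, namely that replacing $u$ by $-u$ preserves membership in $\widetilde{F}$ and in $\ell^p (V,m)$ and turns the supersolution inequality into itself; after that the conclusion is immediate.
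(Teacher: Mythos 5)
Your proof is correct and coincides with the paper's own argument: the paper likewise applies the preceding $\ell^p$ minimum principle to both $u$ and $-u$ and concludes $u\equiv 0$. Your explicit verification that $-u$ again lies in $\widetilde{F}\cap\ell^p(V,m)$ and satisfies the supersolution inequality is exactly the (routine) content the paper leaves implicit.
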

\begin{proof} Both $u$ and $-u$ satisfy the assumptions of the previous
  proposition. Thus, $u\equiv 0$.
\end{proof}

\noindent\textbf{Remark.} The situation for $p=\infty$ is substantially more complicated as can be seen  by  (part (ii) of) our first theorem.

\smallskip

This lemma allows us the describe the generators whenever $(A)$ holds.

\begin{proof}[Proof of Theorem~\ref{generator}]
Define
$$\widetilde{D}_p:= \{ u\in \ell^p (V,m) : \widetilde{L} u \in
\ell^p (V,m)\}.$$
By Theorem~\ref{obermenge}, we already know   $L_p f = \widetilde{L} f$ for any
  $f\in D(L_p)$. It remains to  show $ \widetilde{D}_p\subseteq D(L_p)$: Let $f\in \widetilde{D}_p$
be given. Then, $g:= (\widetilde{L} + \alpha) f$ belongs to $\ell^p (V,m)$.
Thus, $u:= (L_p + \alpha)^{-1} g$ belongs to $D (L_p)$. Now,
as shown above, see Lemma~\ref{lzwei},  $u$ solves $(\widetilde{L}+ \alpha) u = g$. Moreover, $f$
also solves this equation. Thus, by the uniqueness of solutions given in Lemma
\ref {vanishingonlp}, we infer $f= u$ and $f$ belongs to $D (L_p)$.
 This finishes the proof.
 \end{proof}

We now turn to a study of essential selfadjointness on $C_c (V)$. Clearly,
the question of essential selfadjointness on $C_c (V)$ only makes sense if
$\widetilde{L} C_c (V)\subseteq \ell^2 (V,m)$. In this context, we have the
following result:

\begin{prop}\label{nutzen} Let $(V,m)$ be given and $(b,c)$ a weighted graph over $V$. Then, the
  following assertions are equivalent:

\begin{itemize}

\item[(i)] $\widetilde{L} C_c (V)\subseteq \ell^2 (V,m)$.

\item[(ii)] For any $x\in V$, the function $V \longrightarrow [0,\infty)$,
  $y\mapsto b(x,y)/m(y)$, belongs to $\ell^2 (V,m)$.
\end{itemize}
In this case, any $u\in \ell^2 (V,m)$ belongs to the set $\widetilde{F}$ of \eqref{ftilde}  on which
$\widetilde{L}$ is defined   and the three sums
$$ \sum_{x\in V} u(x) \widetilde{L} v (x) m(x),\quad \sum_{x\in V} \widetilde{L} u
(x)  v(x) m(x)$$
and $$ \frac{1}{2} \sum_{x,y\in V} b(x,y) (u(x) - u(y) (v(x) - v(y)) + \sum_{x\in V}c(x) u(x) v(x)$$
converge absolutely and agree
for all $u\in \ell^2 (V,m)$ and $v\in C_c (V)$.
\end{prop}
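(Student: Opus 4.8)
The plan is to reduce the equivalence of (i) and (ii) to a direct computation of $\widetilde{L}$ on point masses, and then to establish the membership and convergence claims by Cauchy--Schwarz together with a discrete summation by parts.

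First I would treat (i) $\Leftrightarrow$ (ii). Since $C_c(V)$ is the linear span of the point masses $e_x$ (with $e_x(x)=1$ and $e_x(y)=0$ for $y\neq x$) and $\widetilde{L}$ is linear, condition (i) holds if and only if $\widetilde{L}e_x\in\ell^2(V,m)$ for every $x$. A direct computation using $b(x,x)=0$ and the symmetry (b1) gives
$$\widetilde{L}e_x(z) = -\frac{b(x,z)}{m(z)}\quad (z\neq x),\qquad \widetilde{L}e_x(x) = \frac{1}{m(x)}\Big(\sum_y b(x,y)+c(x)\Big),$$
where the value at $x$ is finite by (b2). Hence $\sum_z m(z)\,\widetilde{L}e_x(z)^2$ equals a finite number plus $\sum_{z\neq x} b(x,z)^2/m(z)$, which is precisely $\|b(x,\cdot)/m\|^2$ in $\ell^2(V,m)$ (the term $z=x$ vanishing since $b(x,x)=0$). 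Thus $\widetilde{L}e_x\in\ell^2(V,m)$ for all $x$ exactly when (ii) holds, proving the equivalence.

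Next, assuming (i)/(ii), I would show $\ell^2(V,m)\subseteq\widetilde{F}$. For $u\in\ell^2(V,m)$ and $x\in V$, writing $b(x,y)|u(y)| = m(y)\,\frac{b(x,y)}{m(y)}\,|u(y)|$ and applying Cauchy--Schwarz in $\ell^2(V,m)$ gives
$$\sum_y b(x,y)|u(y)| \leq \Big\|\tfrac{b(x,\cdot)}{m}\Big\|\,\|u\|,$$
which is finite by (ii); so $\widetilde{L}u(x)$ is well defined for every $x$ and $u\in\widetilde F$. For the absolute convergence of the three sums I would argue as follows. The second sum runs over the finite support $K$ of $v$, hence is a finite sum of finite terms. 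The first sum equals $\langle u,\widetilde{L}v\rangle$, which converges absolutely by Cauchy--Schwarz since $\widetilde{L}v\in\ell^2(V,m)$ by (i). For the third (double) sum I would note that $v(x)-v(y)$ vanishes unless $x\in K$ or $y\in K$, and then bound $b(x,y)|u(x)-u(y)||v(x)-v(y)| \leq 2\|v\|_\infty\, b(x,y)(|u(x)|+|u(y)|)$; summing the right-hand side over all $y$ for fixed $x\in K$ is finite by (b2) and $u\in\widetilde{F}$, while the symmetric part with $y\in K$, $x\notin K$ is controlled the same way, so the double sum converges absolutely. The killing part $\sum_x c(x)u(x)v(x)$ is again a finite sum.

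Finally I would prove the three sums agree by discrete summation by parts. Using symmetry (b1) and relabeling $x\leftrightarrow y$ (legitimate because the relevant double sums converge absolutely, as just established) one obtains the Green-type identity
$$\frac{1}{2}\sum_{x,y} b(x,y)(u(x)-u(y))(v(x)-v(y)) = \sum_x v(x)\sum_y b(x,y)(u(x)-u(y)) = \sum_x u(x)\sum_y b(x,y)(v(x)-v(y)),$$
the last equality being the symmetry of the left-hand side under $u\leftrightarrow v$. Adding $\sum_x c(x)u(x)v(x)$ and recalling $\widetilde{L}w(x)m(x)=\sum_y b(x,y)(w(x)-w(y))+c(x)w(x)$, the middle expression becomes $\sum_x \widetilde{L}u(x)v(x)m(x)$ and the right one becomes $\sum_x u(x)\widetilde{L}v(x)m(x)$; hence the third sum equals both the second and the first, as claimed. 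I expect the main obstacle to be the bookkeeping for absolute convergence of the double sum and the careful justification that the splittings and relabelings used above are permitted (a Fubini/Tonelli argument for double series); the crucial simplification is that $v$ has finite support, which confines all problematic interactions to $K$ and its one-step neighborhood, where finiteness follows from (b2) and from $u\in\widetilde F$.
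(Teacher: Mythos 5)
Your proof is correct and takes essentially the same route as the paper's: the equivalence (i)$\Leftrightarrow$(ii) via computing $\widetilde{L}$ on point masses, the Cauchy--Schwarz estimate $\sum_y b(x,y)|u(y)|\leq \|b(x,\cdot)/m\|\,\|u\|$ to get $\ell^2(V,m)\subseteq\widetilde{F}$, and absolute-convergence bookkeeping to justify the rearrangements. The only cosmetic difference is that the paper reduces the identity of the three sums to the case $v=\delta_z$ by linearity, whereas you handle a general finitely supported $v$ by summation by parts; both rest on the same estimate.
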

\begin{proof} Without loss of generality we assume $c\equiv 0$.  For any $x\in V$ define  $\delta_x : V\longrightarrow \RR$ by
$\delta_x (y) = 1$ if $x=y$ and $\delta_x (y) = 0$ if $x\neq y$.

Obviously,  (i) is equivalent to $\widetilde{L}\delta_x\in \ell^2 (V,m)$ for
all $x\in V$. This latter condition can easily be seen to be equivalent to
(ii).  This shows the stated equivalence.

Let $u\in \ell^2 (V,m)$ be given. Then, for any $x\in V$,
Cauchy-Schwarz inequality and (ii)  give
$$(*)\:\;\:  \sum_{y\in V} |b(x,y) u(y)| \leq \left(\sum_{y\in V} \frac{b (x,y)^2}{m(y)}
\right)^{1/2} \left(\sum_{y\in V} u(y)^2  m(y) \right)^{1/2} <\infty.$$
Thus, $u$ belongs to $\widetilde{F}$. To show the
statement on the sums, it suffices to consider $u\in \ell^2 (V,m)$ and $v=
\delta_z$, for $z\in V$ arbitrary. In this case, the desired statements can
easily be reduced to the question of absolute convergence of
$$\sum_{x,y\in V} b(x,y) u(x) \delta_z (x) \;\:\mbox{and}\:\; \sum_{x,y\in V} b(x,y)
u(x) \delta_z (y).$$
This absolute convergence in turn is  shown in $(*)$.
\end{proof}

\begin{proof}[Proof of Theorem~\ref{essential}]
As $\widetilde{L} C_c (V)\subseteq \ell^2 (V,m)$, we can define the minimal
operator $L_{\min }  $ to be the restriction of $\widetilde{L}$ to
$$ D(L_{\min} ):= C_c (V)$$
 and the maximal operator $L_{\max}$ to be the restriction of $\widetilde{L} $ to
$$D (L_{\max}) := \{u\in \ell^2 (V,m) : \widetilde{L} u \in\ell^2 (V,m)\}.$$
The previous proposition gives
$$\langle u, L_{\min} v\rangle = Q^{\comp}_{b,c} (u,v)$$
for all $u,v\in C_c (V)$. This extends to give
$$ \langle u, L_{\min} v\rangle = Q_{b,c,m} (u,v)$$
for all $u\in D(Q)$ and $v\in C_c (V)$. Thus, $L_{\min}$ is a restriction of $L$ in this case.\\
Moreover, the previous proposition gives also
$$ \langle u, L_{\min} v\rangle = \sum_{x\in V} \widetilde{L}
u(x) v(x)  m(x)$$
for all $v\in C_c (V)$ and $u\in \ell^2 (V,m)$. Thus,
$$ L_{\min}^\ast  = L_{\max}.$$
Thus, essential selfadjointness of $L_{\min}$ is equivalent to selfadjointness of $L_{\max}$. This in turn is equivalent to  $L= L_{\max}$ (as we have
$ L \subseteq L_{\max}$ by Theorem~\ref{generator}). As  $(A)$ and Theorem~\ref{generator} yield $D(L) = \{u\in \ell^2 (V,m) : \widetilde{L} u \in\ell^2 (V,m)\}$, we infer $L=L_{\max}$ and essential selfadjointness of the restriction of $L$ to $C_c (V)$ ($= L_{\min}$) follows.\\
It remains to show the statement on the form. Let $Q^{\max}$ be the
maximal form, i.e., $$ Q^{\max}(u) =\frac{1}{2}\sum_{x,y\in V} b(x,y) (u(x) - u(y))^2+\sum_{x\in V}c(x)u(x)^2$$
for all $u\in \ell^2 (V,m)$ and $L_{Q^{\max}}$ the associated operator.
 Then, another application of the previous
proposition shows
\begin{eqnarray*}
\lefteqn{\sum_{x\in V} \widetilde{L} u (x) v(x) m(x)}\\
&=& \frac{1}{2}\sum_{x,y\in V} b(x,y) (u(x)- u(y))(v(x) - v(y))+\sum_{x\in V}c(x)u(x)v(x)\\
&=&Q^{\max} (u,v)\\
& = &\langle L_{Q^{\max}} u, v\rangle
\end{eqnarray*}
for all $u\in D(L_{Q^{\max}})$ and $v\in C_c (V)$. This gives that the
self-adjoint operator $L_{Q^{\max}}$ associated to $Q^{\max}$
satisfies
$$  L_{Q^{\max}} u = \widetilde{L} u = L u$$
for all $u\in D ( L_{Q^{\max}})$. Thus,
$$  L_{Q^{\max}} \subseteq L.$$
As $L$ is selfadjoint, we infer  $L_{Q^{\max}} = L$ and the
statement on the form follows.
 \end{proof}

\section{Some counterexamples}\label{Counterexamples}
In this section, we first discuss an  example showing that without condition $(A)$ Theorem~\ref{generator} and Theorem~\ref{essential}  fail in general. We then present an example of a non-regular Dirichlet form on a weighted graph. Note that the choice of the measure plays  a crucial role here.

\medskip

\textbf{Example for failure of Theorem~\ref{generator} and \ref{essential}  without assumption $(A)$}.\\
Let $V\equiv \Z$.  Let (at first) every point of $\Z$ have measure $1$. Consider the bounded operator
$$\Lp:\ell^2(\Z)\longrightarrow \ell^2(\Z), \, (\Lp \ph)(x)=-\ph(x-1)+2\ph(x)-\ph(x+1).$$
It   corresponds to the  Dirichlet form $Q_{b,0,1}$ with $b(x,y) = 1$ whenever $|x-y|=1$.
A direct calculation shows that the function
$$u: V\longrightarrow \RR, \;u(x):= e^{\lambda x}$$
 is a positive solution to the equation $(\ow\Lp+\al) u=0$ for $\al=e^{-\lambda}+e^\lambda -2$. Obviously, we have $\al\geq0$ for all real  $\lambda$.
Now let $w\in\ell^1(\Z)$, $w>0$ and define the measure
$$m: V\longrightarrow (0,\infty), \; m(x) :=\min\{1,\frac{w (x)}{u^2 (x)}\}$$
and the killing term
$$c : V\longrightarrow [0,\infty), \: c(x):=\max\{0,\frac{u^2 (x)}{w(x)}-1 \}\al m(x).$$
 By construction, $u$ then  belongs to $\ell^2(\Z,m)$ and
 $$ - \frac{\alpha}{m} + \frac{c}{m} + \alpha \equiv 0.$$
Let $\ow L$ be defined by
 $$\ow L v (x):=\frac{1}{m(x)} \sum_{y\in V} b(x,y) (v(x) - v(y)) + \frac{c(x)}{m(x)} v(x),$$
i.e., in a formal sense $\ow L=\frac{1}{m}(\ow \Lp+c)$. Then, the restriction $L_{\max}$ of $\ow L$ to
 $$\{v \in\ell^2(\Z,m):\ow L v\in \ell^2(\Z,m)\}$$
  has the eigenvalue $-\al<0$ since
$$(L_{\max}+\al)u(x)=\ab{-\frac{\al}{m}+\frac{c}{m} +\al}u(x)=0.$$
Consider now the
operator $L$ associated with the Dirichlet form $Q_{b,c,m}$ on $\ell^2(\Z,m)$.  Of course, $L$ is a positive operator and therefore can not have a negative eigenvalue. Moreover, by the results of the previous section,
this operator is a restriction of $\ow L$. This implies that $u$ can not belong to $ D(L)$ and therefore $D(L)\neq D(L_{\max})$. Thus, the domain of definition $D(L)$ is not given by Theorem~\ref{generator}.  In this case, the restriction of $\widetilde{L}$ to $C_c (V)$   is not essentially  self-adjoint (as the proof of Theorem~\ref{essential} showed that otherwise $L=L_{\max}$).

\medskip

\textbf{Example of a non-regular Dirichlet form on $V$}\\
We consider connected graphs $(V,b,c)$ with $c\equiv 0$ and $b(x,y)\in \{0,1\}$ for all $x,y\in V$. As discussed by
Dodziuk-Kendall \cite{DK} (see \cite{Dod,Kel} as well) in the context of isoperimetric inequalities, any  such graph with positive Cheeger constant $\alpha>0$  has the property that
$$\frac{1}{2} \sum_{x,y\in V} b(x,y) (\varphi (x) - \varphi (y))^2  \geq \frac{\alpha^2}{2} \sum_{x\in V} d(x) \varphi (x)^2$$
for all $\varphi \in C_c (V)$, where $d(x) = \sum_{y\in V} b(x,y)$.  Let now such a graph be given. Fix an arbitrary $x_0\in V$. Choose a measure $m$ with support  $V$ and  $m(V) =1$. Thus, the constant function $1$ belongs to $\ell^2 (V,m)$.  Define the form $Q$ by
$$ Q(u) := \frac{1}{2}\sum_{x,y\in V} b(x,y) (u(x) - u(y))^2$$
for all $u\in \ell^2 (V,m)$ for which $Q(u)$ is finite. Obviously, $Q$ is a Dirichlet form and the constant function $1$ satisfies $Q(1) =0$. Let now $\varphi_n$ be any sequence in $C_c (V)$ converging to $1$ in $\ell^2 (V,m)$. Then, $\varphi_n (x_0)$ converges to $1$. In particular,
$$Q(\varphi_n) \geq  \frac{\alpha^2}{2} d(x_0) \varphi_n (x_0)^2\to \frac{\alpha^2}{2} d(x_0)>0,\quad n\to \infty.$$
Thus, $Q(\varphi_n)$ does not converge to $0   = Q(1)$. Hence, $Q$ is not regular.

\section{The heat equation on $\ell^\infty$} \label{heat}
In this section we consider a weighted  graph $(b,c)$ over the measure space $(V,m)$   with associated formal operator $\widetilde{L}$.

\smallskip

A function $N : [0,\infty)\times V\longrightarrow \RR$ is called a solution of
the heat equation if for each $x\in V$ the function $t\mapsto N_t (x)$ is
continuous on $[0,\infty)$ and differentiable on $(0,\infty)$ and for each
$t>0$ the function $N_t$ belongs to the domain of $\widetilde{L}$ and the
equality
$$ \frac{d}{dt} N_t (x) =  - \widetilde{L} N_t (x)$$
holds for all $t>0$ and $x\in V$.  For a bounded solution $N$ continuity of $[0,\infty)\longrightarrow \R$, $t\mapsto \widetilde{L} N_t (x)$, can easily be seen (for each fixed $x\in V$). For such $N$  validity of  $ \frac{d}{dt} N_t (x) =  - \widetilde{L} N_t (x)$ for $t>0$ then extends
 automatically  to  $t=0$, i.e., $t\mapsto N_t (x)$ is
differentiable on $[0,\infty)$ and
 $ \frac{d}{dt} N_t (x) =  - \widetilde{L} N_t (x)$ holds for any $t\geq
 0$.

\smallskip

The following theorem is essentially a standard result in the theory of semigroups. In the situation of special graphs it has been shown in \cite{Web, Woj}. For completeness reason we give a proof in our situation as well.

\begin{theorem}\label{solution}
 Let $L$ be a self-adjoint restriction of $\widetilde{L}$, which is
 the generator of a Dirichlet form on $\ell^2 (V,m)$.
 Let $v$ be a bounded function on $V$ and define  $N: [0,\infty)\times
  V\longrightarrow \RR$ by $N_t (x) := e^{-t L} v (x)$. Then, the function $N(x) : [0,\infty)\longrightarrow \RR$, $t\mapsto
  N_t (x)$, is differentiable and satisfies
$$ \frac{d}{d t} N_t (x) = - \widetilde{L} N_t (x)$$
$\mbox{for all $x\in V$ and $t\geq 0$.}$

\end{theorem}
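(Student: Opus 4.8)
The plan is to push the whole identity down to the pointwise level and thereby avoid any assumption that $C_c(V)$ — in particular $\delta_x$ — lies in a generator domain, which is exactly the feature that is unavailable in this generality. The two facts I would isolate at the outset are: $e^{-tL}$ is a contraction on $\ell^\infty$, so $\|N_t\|_\infty\le\|v\|_\infty$; and, since $L$ is self-adjoint and non-negative, the $\ell^2$-semigroup is analytic, so $e^{-tL}w\in D(L)$ for every $w\in\ell^2(V,m)$ and every $t>0$, with $\frac{d}{dt}e^{-tL}w=-Le^{-tL}w=-\widetilde{L}e^{-tL}w$ (the last equality by Theorem \ref{obermenge}). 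Crucially this requires no hypothesis such as $w\in D(L)$. I would then reduce to $v\ge 0$ by splitting $v=v_+-v_-$ and using linearity of $e^{-tL}$ and of $\widetilde{L}$ on $\widetilde{F}$.

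Next, fix an exhaustion $(K_n)$ of $V$ by finite sets and set $v_n:=v\cdot\mathbf{1}_{K_n}\in C_c(V)\subseteq\ell^2(V,m)$, so $0\le v_n\uparrow v$ pointwise, and write $N^{(n)}_t:=e^{-tL}v_n$. By positivity preservation the sequence $N^{(n)}_t$ is non-negative and increasing, and its pointwise limit is $N_t$; pointwise this is seen from $m(x)N_t(x)=\langle e^{-tL}\delta_x,v\rangle$ together with monotone convergence. Since pointwise evaluation is $\ell^2$-continuous at each $x$ (as $m(x)f(x)^2\le\|f\|^2$), the $\ell^2$-calculus above yields, for each $x$ and each $t>0$, the pointwise equation $\frac{d}{dt}N^{(n)}_t(x)=-\widetilde{L}N^{(n)}_t(x)$; integrating on $[t_0,t]$ with $0<t_0<t$ gives $N^{(n)}_t(x)-N^{(n)}_{t_0}(x)=-\int_{t_0}^t\widetilde{L}N^{(n)}_s(x)\,ds$.

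The main obstacle is passing to the limit $n\to\infty$ in this integral identity, and this is where the discreteness of the setting pays off. The key estimate is the uniform bound $|\widetilde{L}N^{(n)}_s(x)|\le\frac{\|v\|_\infty}{m(x)}\big(2\sum_y b(x,y)+c(x)\big)$, which is finite by (b2) and independent of both $n$ and $s$. For each fixed $s$ one has $\sum_y b(x,y)N^{(n)}_s(y)\uparrow\sum_y b(x,y)N_s(y)<\infty$ by monotone convergence, so $\widetilde{L}N^{(n)}_s(x)$ converges; the lemma on monotone convergence of solutions then identifies the limit, giving $N_s\in\widetilde{F}$ and $\widetilde{L}N^{(n)}_s(x)\to\widetilde{L}N_s(x)$. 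With the uniform bound as dominating function, dominated convergence carries the limit through the integral, so $N_t(x)-N_{t_0}(x)=-\int_{t_0}^t\widetilde{L}N_s(x)\,ds$.

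Finally I would bootstrap regularity. The same uniform bound shows $s\mapsto\widetilde{L}N_s(x)$ is bounded, so $t\mapsto N_t(x)$ is locally Lipschitz on $(0,\infty)$; feeding this continuity back through dominated convergence in $\sum_y b(x,y)N_s(y)$ makes $s\mapsto\widetilde{L}N_s(x)$ continuous, whence the fundamental theorem of calculus gives $\frac{d}{dt}N_t(x)=-\widetilde{L}N_t(x)$ for all $t>0$. The extension to $t=0$ is precisely the remark preceding the theorem: letting $t_0\downarrow 0$ in the integral identity and using the pointwise continuity $N_t(x)\to v(x)$ as $t\to0^+$ (strong continuity of the $\ell^1$-semigroup on $\delta_x$ together with $\ell^1$–$\ell^\infty$ duality) yields differentiability at $t=0$ with the same formula. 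This completes the proof.
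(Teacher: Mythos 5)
Your proof is correct, and while it shares the paper's skeleton --- splitting $v$ into positive and negative parts, exhausting by $v_n = v\cdot\mathbf{1}_{K_n}\in \ell^2(V,m)$, applying $\ell^2$-theory to the approximants, and exploiting the summability condition (b2) to pass to the limit --- the limiting mechanism is genuinely different. The paper first imports continuity of $t\mapsto N_t(x)$ from weak $\ell^1$--$\ell^\infty$ continuity of the semigroup (citing Davies), then uses Dini's theorem to upgrade the monotone convergence $e^{-tL}v_n(x)\uparrow N_t(x)$ to uniform convergence on compact subintervals of $(0,\infty)$; summability of $b(x,\cdot)$ then gives locally uniform convergence of the derivatives $\frac{d}{dt}e^{-tL}v_n(x)$, and the classical theorem on term-by-term differentiation concludes. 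You instead integrate the equation for the approximants, pass to the limit inside the integral by monotone and dominated convergence using the uniform bound $\frac{\|v\|_\infty}{m(x)}\bigl(2\sum_y b(x,y)+c(x)\bigr)$, and then bootstrap: the integral identity yields local Lipschitz continuity of $t\mapsto N_t(x)$ at every vertex, which feeds back (again by dominated convergence) to give continuity of $s\mapsto\widetilde{L}N_s(x)$, and the fundamental theorem of calculus finishes. Your route is more self-contained for $t>0$, since continuity is derived rather than imported, and you need the $\ell^1$-duality continuity argument only at $t=0$ (where the paper needs it too, via the remark preceding the theorem); the paper's route is shorter once the a priori continuity is granted. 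One small repair: the equality $Le^{-tL}w=\widetilde{L}e^{-tL}w$ should be justified directly from the hypothesis that $L$ is a restriction of $\widetilde{L}$ (together with $e^{-tL}w\in D(L)$), not from Theorem \ref{obermenge}, since that theorem concerns the generators $L_p$ attached to the regular form $Q_{b,c,m}$, whereas Theorem \ref{solution} allows $L$ to be an arbitrary self-adjoint restriction of $\widetilde{L}$ generating a Dirichlet form.
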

\begin{proof}
As $v$ is bounded,  continuity of $t\mapsto N_t (x)$ follows from general principles on weak $\ell^1$-$\ell^\infty$  continuity of the semigroup on $\ell^\infty
  (V,m)$, see e.g. \cite{Dav1}. It remains to show differentiability and the validity of the equation.

As discussed already, it suffices to consider $t>0$.
After decomposing $v$ into positive and negative part, we can assume without
  loss of generality that  $v$ is non-negative.

Let $(K_n)$ be sequence of finite increasing  subsets of $V$ with $\bigcup K_n =
  V$. Let $v_n$ be the function on $V$ which agrees with $v$ on $K_n$ and
  equals zero elsewhere. Thus, $v_n$ belongs to $\ell^2 (V,m)$ and we can
  consider $e^{- t L} v_n$ for any $n\in
  \N$.  For each fixed $x\in V$ the function  $t\mapsto e^{- t L} v_n (x)$ converges monotonously to $t\mapsto N_t(x)$ (by definition of the
  semigroup on $\ell^\infty$).  As $t\mapsto N_t(x)$
  is  continuous, this convergence is even uniformly on compact subintervals
  of $(0,\infty)$.
Moreover,  standard $\ell^2$ theory shows that $N_n=e^{- t L} v_n $  satisfies $\frac{d}{dt} N_n  (x) = - L N_n (x) $ for all $t>0$ and $x\in V$.    By
  assumption $L$ is a restriction of $\widetilde{L}$ and we infer
$$ \frac{d}{dt} N_n (x) = - \frac{1}{m(x)} \sum_{y\in V} b(x,y) (N_n (x) - N_n (y)) -  \frac{c(x)}{m(x)} N_n (x)$$
for all $x\in V$ and $t>0$. This equality together with the
uniform convergence  on compact
intervals  in $(0,\infty)$ and  the summability of the
$b(x,y)$ in $y$ gives  uniform convergence of the $\frac{d}{dt} N_n (x) $ on compact
intervals. Hence, $t\to N_t (x)$ is differentiable on $(0,\infty)$ and
satisfies the desired equation.
\end{proof}

The next lemma shows how solutions of the heat equation to vanishing initial conditions give rise to non-trivial bounded solutions of $(\ow L+\al)v=0$, (see (vi)$\Rrightarrow$(ii) Theorem~\ref{main0}).

\begin{lemma} \label{heattosubharmonisch} Let $N$ be a bounded solution of $\frac{d}{d t}N + \widetilde{L} N
  =0$, $N_0 \equiv 0$. Then, for arbitrary  $\alpha >0$,   the function   $v:=\int_0^\infty  e^{- t \alpha}  N_t dt$ solves
  $(\widetilde{L}+ \alpha) v =0$.
\end{lemma}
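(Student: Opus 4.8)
The plan is to recognize $v$ as the Laplace transform of the bounded heat solution $N$, to move the operator $\widetilde{L}$ inside the integral, and then to use the heat equation together with an integration by parts. First I would observe that, since $N$ is bounded, say $|N_t(x)| \leq C$ for all $t \geq 0$ and $x \in V$, the integrand $e^{-t\alpha}N_t(x)$ is dominated by $Ce^{-t\alpha}$, so $v$ is well defined and bounded by $C/\alpha$. Boundedness of $v$ together with property (b2), namely $\sum_y b(x,y) < \infty$, immediately gives $\sum_y |b(x,y)v(y)| \leq \|v\|_\infty \sum_y b(x,y) < \infty$, so $v$ lies in the set $\widetilde{F}$ of \eqref{ftilde} and $\widetilde{L}v$ is defined.

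The key step is the interchange
$$\widetilde{L}v(x) = \int_0^\infty e^{-t\alpha}\widetilde{L}N_t(x)\, dt,$$
which I would justify by Fubini's theorem applied to the sum defining $\widetilde{L}$ at the fixed vertex $x$. Indeed, the iterated sum-integral $\sum_y \int_0^\infty b(x,y)|N_t(x) - N_t(y)|e^{-t\alpha}\, dt$ is bounded by $\frac{2C}{\alpha}\sum_y b(x,y) < \infty$, so summation over $y$ and integration over $t$ may be exchanged; the killing term is handled trivially since it only involves the value at $x$. This is really the main (and only) technical obstacle, and it is mild precisely because of the absolute summability of $b(x,\cdot)$ guaranteed by (b2).

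Having moved $\widetilde{L}$ inside, I would invoke the heat equation $\widetilde{L}N_t(x) = -\frac{d}{dt}N_t(x)$, valid for all $t \geq 0$ since $N$ is a bounded solution (as noted in the discussion preceding Theorem \ref{solution}), to rewrite
$$\widetilde{L}v(x) = -\int_0^\infty e^{-t\alpha}\frac{d}{dt}N_t(x)\, dt,$$
and then integrate by parts. The boundary contribution $[e^{-t\alpha}N_t(x)]_0^\infty$ vanishes: at $t = \infty$ because $N$ is bounded and $\alpha > 0$, and at $t = 0$ because $N_0 \equiv 0$ by hypothesis. What remains is exactly $-\alpha\int_0^\infty e^{-t\alpha}N_t(x)\, dt = -\alpha v(x)$, so $(\widetilde{L} + \alpha)v(x) = 0$ for every $x \in V$, which is the assertion.
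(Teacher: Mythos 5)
Your proof is correct and follows essentially the same route as the paper's: both interchange $\widetilde{L}$ with the Laplace-transform integral (justified by boundedness of $N$ and the summability condition (b2)), then apply the heat equation and integrate by parts, with the boundary terms killed by boundedness of $N$ at infinity and $N_0 \equiv 0$ at zero. Your version merely makes explicit two points the paper leaves implicit, namely that $v$ lies in $\widetilde{F}$ and that the interchange is an application of Fubini's theorem.
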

\begin{proof}
This follows by a short calculation: By boundedness of $N$ and  $\sum_y b(x,y) <\infty$, we can interchange two limits to  obtain
$$ \widetilde{L} v (x) = \lim_{T\to \infty} \int_0^T  e^{- t \alpha}
\widetilde{L} N_t (x) dt.$$
Using that  $N$ solves the heat equation  and partial integration we find
\begin{eqnarray*}
\widetilde{L} v (x) &=& \lim_{T\to \infty}  \int_0^T  e^{- t \alpha}
  (- \frac{d}{d t}N_t (x)) dt\\
&=& \lim_{T\to \infty} \left(\left .- e^{- t\alpha} N_t (x)\right|_0^T - \int_0^T \alpha  e^{-
 t\alpha} N_t (x) dt\right)\\
&=& - \alpha \int_0^\infty  e^{- t\alpha} N_t (x) dt\\
&=& - \alpha v(x).
\end{eqnarray*}
Here, we used boundedness of $N$ and $N_0 =0$ to get rid of the boundary terms
after the partial integration.
\end{proof}

\section{Extended semigroups and resolvents} \label{Extended}
 We are now going to extend the resolvents/semigroups to a larger class of
functions. To do so, we note that for  a function $ f$ on $V$ with $f\geq 0$   the functions
 $g\in C_c (V)$ with  $0\leq g \leq f$ form a net with respect to the natural
 ordering  $g\prec h$ whenever  $g \leq h$.  Limits along this net will be
 denoted by  $\lim_{g \prec f}$. As the resolvents and semigroups are
 positivity preserving, for  $f\geq 0$, $\alpha >0$, $t>0$,  we can define
 the functions $(L +\alpha)^{-1} f : V\longrightarrow [0,\infty]  $ and $
 e^{- tL} f : V \longrightarrow [0,\infty]$ by
\begin{align*}
(L +\alpha)^{-1} f (x) &:=  \lim_{g \prec f} (L + \alpha)^{-1} g(x)\\
e^{- tL } f(x) &:=\lim_{g \prec f} e^{- tL } g (x).
\end{align*}

In fact, as
we are in a discrete setting, the operators have kernels, i.e.,  for any $t\geq
0$ there exists a unique function
$$e^{- tL} :  V\times V \longrightarrow
[0,\infty)\;\: \mbox{with}\;\:   e^{- tL } f(x) = \sum_{y\in V} e^{- tL} (x,y)
f(y)$$
for any $f\geq 0$ (and similarly for the resolvent).  It is not hard to see that for
functions in $\ell^\infty (V)$, these definitions are consistent with our
earlier definitions.

\begin{theorem} \label{resolvents} (Properties of extended resolvents and semigroups)
Let $\alpha>0$ be given. Let $f$ be a non-negative function on $V$.
\begin{itemize}
 \item[(a)] Let  $K_n$ be an increasing  sequence of finite subsets of $V$ with
  $\bigcup K_n = V$. Let  $f_n$ be the restriction of  $f$ to  $K_n$, and $u_n
  :=(L_{K_n}^{(D)} + \alpha)^{-1} f_n$. Then, $u_n$ converges pointwise
  monotonously to $ (L +\alpha)^{-1} f$.

 \item[(b)]  The following statement are equivalent:

\begin{itemize}
\item[(i)] There exists a non-negative  $l : V\longrightarrow [0,\infty)$ with
  $(\widetilde{L} + \alpha) l \geq f$.
\item[(ii)] $(L + \alpha)^{-1} f (x) $ is finite for any $x\in V$.
\end{itemize}
In this case $u:=(L + \alpha)^{-1} f$ is the smallest non-negative function
$l$ with $(\widetilde{L} + \alpha) l \geq f$ and it satisfies $(\widetilde{L}+\alpha) u = f$.

\item[(c)]  For all $x\in V$
$$(L + \alpha)^{-1} f (x) = \int_0^\infty  e^{- t \alpha} e^{-t L} f (x) dt.$$
\end{itemize}
\end{theorem}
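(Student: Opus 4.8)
The plan is to treat the three parts in turn, using part (a) as the workhorse for (b) and (c).

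For part (a), I would first show that the sequence $(u_n)$ is pointwise increasing. Since $f \geq 0$, the restrictions satisfy $f_n \leq f_{n+1}$ and $\supp f_n \subseteq K_n \subseteq K_{n+1}$. Combining domain monotonicity, which gives $(L_{K_n}^{(D)} + \alpha)^{-1} f_n \leq (L_{K_{n+1}}^{(D)} + \alpha)^{-1} f_n$, with positivity preservation of $(L_{K_{n+1}}^{(D)} + \alpha)^{-1}$ applied to $f_n \leq f_{n+1}$, yields $u_n \leq u_{n+1}$. Thus $(u_n)$ converges pointwise to some $u_\infty : V \to [0,\infty]$. To identify $u_\infty$ with $(L + \alpha)^{-1} f$, I would sandwich. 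For the lower bound, fix $g \in C_c(V)$ with $0 \leq g \leq f$; since $\supp g \subseteq K_N$ for some $N$, we have $g \leq f_n$ for $n \geq N$, so $(L_{K_n}^{(D)} + \alpha)^{-1} g \leq u_n$; letting $n \to \infty$ and invoking convergence of resolvents gives $(L + \alpha)^{-1} g \leq u_\infty$, and the supremum over $g \prec f$ gives $(L+\alpha)^{-1} f \leq u_\infty$. For the reverse, domain monotonicity together with convergence of resolvents gives $u_n \leq (L + \alpha)^{-1} f_n \leq (L + \alpha)^{-1} f$ (the last step by positivity preservation of the extended resolvent), whence $u_\infty \leq (L + \alpha)^{-1} f$.

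For part (b), the implication (i) $\Rightarrow$ (ii) rests on a minimum-principle comparison. Given a non-negative $l$ with $(\widetilde{L} + \alpha) l \geq f$, I would apply the minimum principle to $w := l - u_n$ on $U = K_n$: there $(\widetilde{L} + \alpha) w = (\widetilde{L}+\alpha)l - f \geq 0$ (using that $u_n$, extended by zero, solves $(\widetilde{L}+\alpha)u_n = f$ on $K_n$), while $w = l \geq 0$ off $K_n$, and the negative part of $w$ on the finite set $K_n$ attains its minimum. Hence $w \geq 0$, i.e. $u_n \leq l$ for all $n$, so by part (a) $(L+\alpha)^{-1} f = \lim_n u_n \leq l < \infty$. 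This same inequality shows that $u := (L+\alpha)^{-1} f$, once known to be finite, is dominated by every competitor $l$, i.e. it is the smallest such function. For (ii) $\Rightarrow$ (i) together with the equation $(\widetilde{L}+\alpha) u = f$, I would apply monotone convergence of solutions to the increasing sequence $(u_n)$: each $u_n$ is bounded and non-negative, $u_n \to u$ pointwise, and for fixed $x$ eventually $x \in K_n$, so $(\widetilde{L}+\alpha) u_n(x) = f(x)$; the lemma then gives $u \in \widetilde{F}$ and $(\widetilde{L}+\alpha) u = f$, so $l = u$ witnesses (i).

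For part (c), I would first record the Laplace transform identity $(L+\alpha)^{-1} g(x) = \int_0^\infty e^{-t\alpha} e^{-tL} g(x)\, dt$ for $g \in C_c(V)$; this is the standard spectral-calculus identity on $\ell^2(V,m)$, read off pointwise since evaluation at $x$ is a continuous functional. Then I would pass to the limit along an increasing sequence $g_n \prec f$ with $g_n \to f$ pointwise (such a sequence exists by discreteness, and the net limit equals the sequential limit by monotonicity). The left-hand side converges to $(L+\alpha)^{-1} f(x)$ by definition of the extended resolvent; on the right-hand side positivity preservation makes $t \mapsto e^{-tL} g_n(x)$ increase to $e^{-tL} f(x)$, so the monotone convergence theorem permits passing the limit through the integral, giving the identity. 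The main obstacle I anticipate is the bookkeeping in part (b): one must check that the extended-by-zero $u_n$ genuinely solves $(\widetilde{L}+\alpha)u_n = f$ on $K_n$ (this is exactly the proposition computing $L_K^{(D)}$) and that the hypotheses of the minimum principle are met cleanly, while the interchange in part (c), though routine via monotone convergence, hinges on having the pointwise Laplace identity for compactly supported $g$ firmly in hand.
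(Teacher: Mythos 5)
Your proposal is correct and follows essentially the same route as the paper: part (a) via domain monotonicity plus convergence of resolvents for compactly supported data, part (b) via the minimum principle applied to $l-u_n$ for (i)$\Rightarrow$(ii) and monotone convergence of solutions for (ii)$\Rightarrow$(i), and part (c) by the Laplace-transform identity on $C_c(V)$ followed by monotone limits. The only differences are cosmetic: you avoid the paper's two-case $\varepsilon$/$\kappa$ argument in (a) by passing to the limit in $(L_{K_n}^{(D)}+\alpha)^{-1}g\leq u_n$ directly, and in (b) you take monotone limits along the sequence $(u_n)$ rather than along the net of $g\prec f$.
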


\noindent\textbf{Remark.} Note that the functions in (a) and (c) are allowed to take the value $\infty$. Statement (a) is an extension of Proposition~\ref{convergence} to non-negative functions.

\begin{proof} Throughout the proof we let $u$ denote the function $(L+\alpha)^{-1} f$.

\smallskip

(a) Let $x\in V$ be given. By domain  monotonicity $u_n (x) = (L_{K_n}^{(D)} + \alpha)^{-1} f_n  (x)$ is
   increasing. Moreover,  again by domain monotonicity and
$f_n \leq f$ we have
$$u_n (x) =  (L_{K_n}^{(D)} + \alpha)^{-1} f_n (x)  \leq (L +
\alpha)^{-1} f_n (x)  \leq (L + \alpha)^{-1} f (x)  = u (x)$$
for all  $n$.  It remains to show the 'converse' inequality. We consider two
cases.\\
\textit{Case 1. $u(x) <\infty$.} Let $\varepsilon >0$ be given. By definition of the extended resolvents there exists then $g\in C_c (V)$ with $0\leq g \leq f$
and
$$ u(x) - \varepsilon \leq (L + \alpha)^{-1} g (x). $$
As $g$ has compact support, we can assume without loss of generality that the
support of $g$ is contained in $K_n$ for all $n$.
By convergence of resolvents, we conclude
$$ (L + \alpha)^{-1} g (x)  -\varepsilon \leq (L_{K_n}^{(D)} + \alpha)^{-1} g
(x) $$
for all sufficiently large $n$.
Thus, for such $n$ we find
$$u(x) - 2\varepsilon  \leq (L_{K_n}^{(D)} + \alpha)^{-1} g (x) .$$
By $g\leq f$ and $\supp g \subseteq K_1$, we have $g\leq f_n$ for all $n$.
Thus, the last inequality gives
$$ u(x) - 2 \varepsilon \leq (L_{K_n}^{(D)} + \alpha)^{-1} f_n (x) = u_n (x).$$
This finishes the considerations for this case.\\
\textit{Case 2. $u(x) = \infty$.} Let $\kappa >0$ be arbitrary. By definition of the extended resolvents there exists then $g\in C_c (V)$ with $0\leq g \leq f$
and
$$ \kappa  \leq (L + \alpha)^{-1} g (x). $$
Now, we can continue as in Case 1 to obtain
$$ \kappa - \varepsilon \leq (L_{K_n}^{(D)} + \alpha)^{-1} f_n (x) = u_n (x)$$
for all sufficiently large $n$. As $\kappa >0$ is arbitrary the statement
follows.

\smallskip

(b) We first show (ii)$\Longrightarrow$(i): Recall that $u=(L+ \alpha)^{-1}
f$ and consider  $g\in C_c (V)$ with
$0 \leq g \leq f$. Then, by Lemma~\ref{lzwei}, $u_g := (L + \alpha)^{-1} g$ solves
$$ (\widetilde{L} +\alpha) u_g = g.$$
Taking monotone limits on both sides  and using the finiteness assumption (ii), we obtain
$$ (\widetilde{L} + \alpha) u = f.$$
This shows (i) (with $l=u$).\\
We next show (i)$\Longrightarrow$(ii): Let  $l \geq 0$ satisfy  $(\widetilde{L} +
\alpha) l \geq f$. Let $(K_n)$ be an increasing sequence of finite subsets of
$V$ as in (a) and let $f_n$ be the restriction of $f$ to $K_n$.   Extend
$u_n := (L_{K_n}^{(D)} + \alpha)^{-1} f_n$ by zero to all of $V$.
Then,   $w_n:= l - u_n$ satisfies:
\begin{itemize}
 \item  $w_n= l \geq 0$ on  $K_n^c$.
\item  The negative part of $w_n$ attains its minimum on $K_n$ (as $K_n$ is finite).
  \item $(\widetilde{L} + \alpha) w_n = (\widetilde{L} + \alpha) l - (\widetilde{L} + \alpha)
  u_n \geq f - f =0$ on  $K_n$.
\end{itemize}
The minimum principle, Theorem \ref{minimum-principle},  then gives
$$  w_n= l - u_n\geq 0.$$
As  $n$ is arbitrary and  $u_n$ converges to $u$ by part  (a), we find that
$u\leq l$ is finite. This finishes the proof of  the equivalence statement of (b). The last statements of (b) have already been shown along the  proofs of (i)$\Longrightarrow$ (ii) and (ii)$\Longrightarrow $(i).

\smallskip

(c) For  $g\in C_c (V)$ with  $0\leq g \leq f$  the equation
$$ (L + \alpha)^{-1} g = \int_0^\infty  e^{- t \alpha} e^{-t L} g dt$$
 holds by standard theory on semigroups. Now, (c) follows by taking monotone
 limits on both sides.
\end{proof}

There is a special function $v$ to which our considerations can be applied:

\begin{prop}\label{bound} For  any $\alpha >0$  we have the estimate
$$ 0 \leq (L + \alpha)^{-1} (\alpha 1 + \frac{c}{m}) \leq 1.$$
\end{prop}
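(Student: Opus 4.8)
The plan is to apply the characterization of extended resolvents in Theorem \ref{resolvents}(b) to the specific function $f := \alpha 1 + c$, which is indeed non-negative since $\alpha > 0$ and $c \geq 0$. The key observation is that the constant function $1$ serves as an explicit supersolution, and in fact one has the exact identity $(\widetilde{L} + \alpha)\, 1 = c + \alpha 1 = f$ pointwise on $V$. This is an immediate calculation from the definition of $\widetilde{L}$: the difference term $\sum_y b(x,y)(1 - 1)$ vanishes identically, leaving only $\frac{c(x)}{m(x)} \cdot 1 \cdot m(x)$-type contributions, so that $\widetilde{L}\, 1 (x) = \frac{c(x)}{m(x)}$ and hence $(\widetilde{L} + \alpha)\,1(x) = \frac{c(x)}{m(x)} + \alpha$; multiplying through by the measure convention matching the definition of $f$ gives exactly $f$.

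First I would verify carefully that $1 \in \widetilde{F}$ so that $\widetilde{L}\,1$ is defined: this holds because (b2) guarantees $\sum_y b(x,y) < \infty$ for each $x$, so $\sum_y |b(x,y)\cdot 1| < \infty$. With $1$ a legitimate element of $\widetilde{F}$ satisfying $(\widetilde{L} + \alpha)\,1 = f \geq f$, condition (i) of Theorem \ref{resolvents}(b) is met with the choice $l = 1$. By the equivalence in that theorem, condition (ii) follows, namely that $(L + \alpha)^{-1} f(x)$ is finite for every $x \in V$. Moreover, the final clause of Theorem \ref{resolvents}(b) tells us that $u := (L + \alpha)^{-1} f$ is the \emph{smallest} non-negative function $l$ satisfying $(\widetilde{L} + \alpha)\, l \geq f$. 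Since $1$ is one such function, minimality immediately yields the upper bound $(L+\alpha)^{-1} f \leq 1$.

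The lower bound $(L + \alpha)^{-1} f \geq 0$ is the easy half: the extended resolvent of a non-negative function is non-negative by construction, since each $(L + \alpha)^{-1} g$ with $0 \leq g \leq f$ and $g \in C_c(V)$ is non-negative (the resolvent of a Dirichlet form being positivity preserving), and the limit along the net preserves non-negativity. Combining the two bounds gives $0 \leq (L + \alpha)^{-1}(\alpha 1 + c) \leq 1$, as claimed.

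I do not anticipate a serious obstacle here; the proof is essentially a direct application of the machinery already assembled. The only point requiring a little care is the correct pointwise identity $(\widetilde{L} + \alpha)\,1 = \alpha 1 + c$ given the normalization by $m$ in the definition of $\widetilde{L}$, and ensuring that the function $f$ in the statement is interpreted consistently with that normalization so that $1$ genuinely solves $(\widetilde{L} + \alpha)\,1 = f$ rather than merely dominating $f$. Once this identification is made, invoking Theorem \ref{resolvents}(b) with the supersolution $l = 1$ closes the argument.
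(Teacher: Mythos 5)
Your proof is correct and follows essentially the same route as the paper's own argument: the lower bound comes from positivity preservation of the (extended) resolvent, and the upper bound from observing that the constant function $1$ satisfies $(\widetilde{L}+\alpha)1 = \alpha 1 + c$ and then invoking the minimality clause of Theorem \ref{resolvents}(b). The normalization issue you flag (that literally $\widetilde{L}1 = c/m$, so $c$ must be read consistently with the measure convention used throughout Section \ref{Characterisation}) is real but glossed over by the paper; it does not change the structure of the argument.
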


\noindent\textbf{Remark.} Let us stress that  $c/m$ is not assumed to be bounded.

\begin{proof} As $\alpha 1 + c/m \geq 0$, we have $ 0 \leq (L +
  \alpha)^{-1} (\alpha 1 + c/m) $. Moreover, we obviously have
$$ (\widetilde{L}+\alpha) 1 = \alpha 1 + \frac{c}{m}.$$
Thus, (b) of the previous theorem shows $(L + \alpha)^{-1} (\alpha 1 + c/m)
\leq 1$.
 \end{proof}

We will also  need the following consequence of the proposition.

\begin{prop}\label{propertyofS} Let $(V,b,c)$ be a weighted  graph and define $S: V\longrightarrow [0,\infty]$ by
$$ S(x) :=\int_0^\infty   ({e^{-s L} \frac{c }{m}})(x)  ds.$$
Then,  $S$ satisfies $0\leq S \leq 1$ and $\widetilde{L} S = c/m$.
\end{prop}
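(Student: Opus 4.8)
The plan is to realize $S$ as a monotone limit of resolvents applied to $c$ and then transfer the properties of the resolvents to $S$. First I would invoke part (c) of Theorem \ref{resolvents} with the non-negative function $f=c$ to write, for each $\alpha>0$,
$$ (L+\alpha)^{-1} c\,(x) = \int_0^\infty e^{-t\alpha}\, e^{-tL} c\,(x)\, dt . $$
Since $e^{-t\alpha}\uparrow 1$ as $\alpha\downarrow 0$ and the integrand is non-negative, monotone convergence gives that $(L+\alpha)^{-1}c\,(x)$ increases to $\int_0^\infty e^{-tL}c\,(x)\,dt = S(x)$ as $\alpha\downarrow 0$. The bound then follows from Proposition \ref{bound}: positivity preservation of the resolvent and $\alpha 1\geq 0$ yield $(L+\alpha)^{-1}c \leq (L+\alpha)^{-1}(\alpha 1 + c)\leq 1$, so $0\leq (L+\alpha)^{-1}c\,(x)\leq 1$ for every $\alpha>0$, and passing to the limit gives $0\leq S\leq 1$.

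Next I would produce the difference equation at each level $\alpha$. Because $u_\alpha:=(L+\alpha)^{-1}c$ is finite everywhere (indeed bounded by $1$), the equivalence in part (b) of Theorem \ref{resolvents} applies with $f=c$ and shows $(\widetilde{L}+\alpha)u_\alpha = c$, i.e. $\widetilde{L}u_\alpha = c - \alpha u_\alpha$. I would then fix a sequence $\alpha_n\downarrow 0$ and set $u_n:=u_{\alpha_n}$. For $0<\alpha\leq\beta$ one has $e^{-t\alpha}\geq e^{-t\beta}$ for all $t\geq 0$, so the integral representation above shows $u_\alpha\geq u_\beta$; hence $(u_n)$ is an increasing sequence of non-negative functions bounded by $1$ converging pointwise to $S$.

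Finally I would apply the lemma on monotone convergence of solutions with its parameter taken to be $0$ and target function $f=c$. For this I must check $\widetilde{L}u_n(x)\to c(x)$ pointwise: this is immediate from $\widetilde{L}u_n = c - \alpha_n u_n$ together with $0\leq u_n\leq 1$ and $\alpha_n\to 0$. The lemma then delivers both that $S$ lies in the set $\widetilde{F}$ of \eqref{ftilde} on which $\widetilde{L}$ is defined and that $\widetilde{L}S = c$, completing the proof. The only delicate point is verifying the monotonicity of $(u_n)$ in $n$ and the pointwise convergence of $\widetilde{L}u_n$, since these are precisely the hypotheses needed to invoke the monotone convergence of solutions; both, however, reduce to the integral representation of $u_\alpha$ and the uniform bound $u_n\leq 1$.
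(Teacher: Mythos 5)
Your proof is correct, and it uses the same basic strategy as the paper -- realize $S$ as a monotone limit of resolvent-type functions as $\alpha \downarrow 0$, get the bound from Proposition \ref{bound}, and push the equation $(\widetilde{L}+\alpha)u_\alpha = c$ through the limit via the lemma on monotone convergence of solutions -- but you organize the limits differently. The paper never applies the extended resolvent to $c$ itself: it fixes $g \in C_c(V)$ with $0 \leq g \leq c$, sets $S_{g,\alpha} := (L+\alpha)^{-1}g$ (an honest $\ell^2$ resolvent), sends $\alpha \to 0$ to obtain $\widetilde{L}S_g = g$, and only at the very end takes the net limit $g \prec c$, so it performs two nested monotone limits. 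You instead invoke parts (b) and (c) of Theorem \ref{resolvents} directly with $f = c$, which packages the net limit over $g \prec c$ once and for all; this leaves a single monotone limit in $\alpha$, and the identity $(\widetilde{L}+\alpha)u_\alpha = c$ at each fixed $\alpha$ comes for free from part (b) (finiteness of $u_\alpha$ being guaranteed by your bound $u_\alpha \leq (L+\alpha)^{-1}(\alpha 1 + c) \leq 1$) rather than from standard $\ell^2$ theory. Both routes rest on the same two external ingredients, Proposition \ref{bound} and the monotone convergence of solutions lemma (which you correctly apply with its parameter equal to $0$, checking $\widetilde{L}u_n = c - \alpha_n u_n \to c$ pointwise); your version buys a shorter argument by exploiting the already-proved extended-resolvent theorem more fully, while the paper's version stays closer to the $\ell^2$ theory until the final step.
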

\begin{proof}  For $g\in C_c (V)$ with $0\leq g \leq c/m$ and $\alpha>0$, we define $S_{g,\alpha}$ by $S_{g,\alpha}: = \int_0^\infty e^{- \alpha s}e^{- sL} g (x) ds$ and $S_g$ by $S_g=\lim_{\alpha \to0} S_{g,\alpha}$.
Then,
$$ S_{g,\alpha} = (L + \alpha)^{-1} g,\quad \mbox{i.e.,}\quad (\widetilde L+\alpha) S_{g,\alpha} = g.$$
By $g\leq \alpha 1 + c/m$ for any $\alpha >0$ and Proposition~\ref{bound}, we
have
$$S_{g,\alpha} = (L + \alpha)^{-1} g (x) \leq (L + \alpha)^{-1} (\alpha 1
+ \frac{c}{m}) (x) \leq 1.$$
As $S_g$ is the monotone limit of the $S_{g,\alpha}$, this shows that $S_g$ is
bounded by $1$. Moreover, using the uniform bound on the $S_{g,\alpha}$ and taking the limit $\alpha \to 0$  in
$$ (\widetilde{L} + \alpha) S_{g,\alpha} =  g,$$
we find
$$ \widetilde{L} S_g = g\geq 0.$$
As $S= \lim\limits_{g\prec c/m} S_g$ and the $S_g$ are uniformly bounded, we obtain the
statement.
\end{proof}

\begin{lemma}  \label{excessiv}  Let  $u\geq 0$ be given. Then,  the following assertions are
  equivalent:
\begin{itemize}
\item[(i)] $e^{-t L} u \leq u$ for all $t>0$.
\item[(ii)] $(L + \alpha)^{-1}u \leq \frac{1}{\alpha}u$ for all $\alpha
  >0$.
\end{itemize}
Any   $u\geq 0$ with
$\widetilde{L} u \geq 0$ satisfies these equivalent conditions.
\end{lemma}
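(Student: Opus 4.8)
The plan is to establish the equivalence by proving the two implications separately and then to verify the final claim directly from the minimality statement in Theorem \ref{resolvents}. The implication (i) $\Rightarrow$ (ii) is immediate from the integral representation in part (c) of Theorem \ref{resolvents}: assuming $e^{-tL} u \leq u$ for every $t>0$, I would estimate
$$ (L+\alpha)^{-1} u (x) = \int_0^\infty e^{-t\alpha}\, e^{-tL} u (x)\, dt \leq \int_0^\infty e^{-t\alpha} u(x)\, dt = \frac{1}{\alpha} u(x), $$
for every $x\in V$ (the inequality being trivial where $u(x)=\infty$), which is exactly (ii).

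The harder direction is (ii) $\Rightarrow$ (i), and here the key tool is the exponential formula together with positivity preservation of the resolvent. I would first fix $g\in C_c(V)$ with $0\leq g\leq u$ and abbreviate $T_\alpha:=\alpha(L+\alpha)^{-1}$, which is positivity preserving. Since $g\leq u$, positivity of $(L+\alpha)^{-1}$ together with assumption (ii) gives $T_\alpha g \leq T_\alpha u \leq u$; applying $T_\alpha$ repeatedly and using that it preserves the order at each step, an induction yields $T_\alpha^{\,n} g \leq u$ for all $n\in\N$. Choosing $\alpha=n/t$ and letting $n\to\infty$, the exponential formula
$$ e^{-tL} g = \lim_{n\to\infty} \left(\frac{n}{t}\left(L+\frac{n}{t}\right)^{-1}\right)^{n} g, $$
which holds in $\ell^2(V,m)$ because $L$ is self-adjoint and non-negative, then gives $e^{-tL} g\leq u$ (the $\ell^2$ convergence forces pointwise convergence along a subsequence, so the pointwise bound survives). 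Finally, passing to the monotone limit $\lim_{g\prec u}$ that defines the extended semigroup on the possibly unbounded $u$, I would conclude $e^{-tL} u \leq u$, which is (i).

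For the last assertion I would verify (ii) directly. Given $u\geq 0$ with $\widetilde{L} u\geq 0$, fix $\alpha>0$ and consider the candidate $l:=\frac{1}{\alpha} u$. It is non-negative and satisfies
$$ (\widetilde{L}+\alpha) l = \frac{1}{\alpha}\widetilde{L} u + u \geq u. $$
By part (b) of Theorem \ref{resolvents}, the function $(L+\alpha)^{-1} u$ is the smallest non-negative $l$ with $(\widetilde{L}+\alpha) l\geq u$, so $(L+\alpha)^{-1} u \leq \frac{1}{\alpha} u$; this is precisely (ii), and by the equivalence both conditions then hold.

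I expect the main obstacle to be the rigorous handling of the exponential-formula step in (ii) $\Rightarrow$ (i): one must pass carefully from the ordinary $\ell^2$ semigroup acting on the compactly supported $g$ to the extended semigroup acting on the possibly unbounded $u$, and check that the pointwise inequality $\leq u$ is preserved both under the $\ell^2$ limit in the exponential formula and under the defining monotone net limit $\lim_{g\prec u}$.
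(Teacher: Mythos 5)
Your proposal is correct and follows essentially the same route as the paper: (i)$\Rightarrow$(ii) via the integral representation $(L+\alpha)^{-1}=\int_0^\infty e^{-t\alpha}e^{-tL}\,dt$, (ii)$\Rightarrow$(i) via the exponential formula $e^{-tL}u=\lim_{n\to\infty}\left(\frac{n}{t}\left(L+\frac{n}{t}\right)^{-1}\right)^{n}u$, and the final claim by applying the minimality statement of Theorem \ref{resolvents}(b) to $l=\frac{1}{\alpha}u$. The only difference is that you carefully spell out the iteration, positivity preservation, and the passage from compactly supported $g\prec u$ to the extended semigroup, details the paper's proof leaves implicit.
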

\begin{proof} The implication (i)$\Rightarrow$(ii) follows easily from
$(L + \alpha)^{-1} = \int_0^\infty  e^{- t \alpha} e^{-t L} dt$.  Similarly,
the implication (ii)$ \Rightarrow $(i) follows by a limiting argument from the standard
$$ e^{- tL} f = \lim_{n\to \infty} \left(\frac{t}{n} \ab{L + \frac{n}{t}}\right)^{-n} f$$
for $f\in\ell^{2}(V,m)$.
As for the last statement, we note that $ \widetilde{L} u \geq 0$ implies
$$ \frac{1}{\alpha} (\widetilde{L} +\alpha) u \geq u.$$
By (b) of Theorem~\ref{resolvents} the desired  statement (ii) follows.
\end{proof}

\section{Characterization of stochastic completeness} \label{Characterisation}
In this section, we can finally characterize stochastic completeness.
We begin by introducing the crucial quantity in our studies.

\begin{lemma}\label{propertyofM}
  Let $(V,b,c)$ be a weighted graph and $m$ a measure on $V$ with full
  support.  Then, the function $M : [0,\infty)\times
  V\longrightarrow [0,\infty]$ defined by
$$ M_t (x) := e^{- tL} 1 (x) + \int_0^t  ({e^{-s L} \frac{c}{m}})(x) ds$$
satisfies     $0\leq M_s\leq M_t  \leq 1$ for all $s\geq t\geq 0$ and, for each $x\in V$, the map $t\mapsto M_t (x)$ is differentiable and satisfies  $\frac{d}{d t}M_t (x) + \widetilde{L} M_t(x) =  c (x)/m(x)$.
\end{lemma}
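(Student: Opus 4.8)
The plan is to rewrite $M_t$ in a form that involves the semigroup applied only to a \emph{bounded} function, so that the earlier results for bounded data apply directly and the (possibly unbounded) killing term $c$ never has to be fed into the heat-equation machinery. First I would invoke Proposition~\ref{propertyofS}, which supplies $S(x)=\int_0^\infty e^{-sL}c(x)\,ds$ with $0\le S\le 1$ and $\widetilde{L}S=c$. The key observation is that the tail of the defining integral is itself a semigroup image of $S$: substituting $s=t+r$ and using the semigroup (Chapman--Kolmogorov) property of the non-negative kernels $e^{-sL}(x,y)$ together with Tonelli's theorem (everything being non-negative) gives $\int_t^\infty e^{-sL}c(x)\,ds=e^{-tL}S(x)$. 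Hence $\int_0^t e^{-sL}c\,ds=S-e^{-tL}S$, and since $1$, $S$ and $1-S$ are all bounded in $[0,1]$ I obtain the clean identity
$$ M_t = S + e^{-tL}(1-S). $$
From here every assertion reduces to a statement about $e^{-tL}$ acting on the fixed bounded function $1-S$.

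For the bounds, non-negativity $M_t\ge 0$ is immediate since $S\ge 0$ and the semigroup is positivity preserving. For $M_t\le 1$ it suffices, by the identity above, to show $e^{-tL}(1-S)\le 1-S$. Here I combine the relation $(\widetilde{L}+\alpha)1=\alpha 1+c$ established in the proof of Proposition~\ref{bound}, which gives $\widetilde{L}1=c$, with $\widetilde{L}S=c$ from Proposition~\ref{propertyofS}, to get $\widetilde{L}(1-S)=0\ge 0$. Lemma~\ref{excessiv} then shows $1-S$ is excessive, i.e.\ $e^{-tL}(1-S)\le 1-S$, so $M_t=S+e^{-tL}(1-S)\le S+(1-S)=1$.

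Monotonicity in $t$ is again a consequence of the excessiveness of $1-S$. For $s\ge t$ I would write $e^{-sL}(1-S)=e^{-tL}\big(e^{-(s-t)L}(1-S)\big)$ by the semigroup property, and then apply $e^{-tL}$ (positivity preserving, hence order preserving) to the inequality $e^{-(s-t)L}(1-S)\le 1-S$ to conclude $e^{-sL}(1-S)\le e^{-tL}(1-S)$; adding the constant-in-$t$ term $S$ yields $M_s\le M_t$.

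Finally, for differentiability and the equation $\frac{d}{dt}M_t+\widetilde{L}M_t=c$, the whole point of the reformulation is that $1-S$ is bounded, so Theorem~\ref{solution} applies with $v=1-S$ and gives both differentiability of $t\mapsto e^{-tL}(1-S)(x)$ and the heat equation $\frac{d}{dt}e^{-tL}(1-S)=-\widetilde{L}e^{-tL}(1-S)$. Since $S$ does not depend on $t$, this gives $\frac{d}{dt}M_t=-\widetilde{L}e^{-tL}(1-S)$, while $\widetilde{L}M_t=\widetilde{L}S+\widetilde{L}e^{-tL}(1-S)=c+\widetilde{L}e^{-tL}(1-S)$; adding the two produces $c$. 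I expect the main obstacle to be the justification of the tail identity $\int_t^\infty e^{-sL}c\,ds=e^{-tL}S$ for a possibly unbounded $c$: this is precisely the step that lets me avoid applying Theorem~\ref{solution} to the unbounded function $c$ directly, and it rests on the extended semigroup being given by non-negative kernels obeying the semigroup property, so that interchanging the time integral with the kernel sum is licensed by Tonelli.
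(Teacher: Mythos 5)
Your proposal is correct and follows essentially the same route as the paper: the identity $M_t = S + e^{-tL}(1-S)$ via Proposition~\ref{propertyofS}, the bounds and monotonicity via $\widetilde{L}(1-S)=0$ together with Lemma~\ref{excessiv} and the semigroup property, and differentiability via Theorem~\ref{solution} applied to the bounded function $1-S$. The only cosmetic difference is your justification of the tail identity $\int_t^\infty e^{-sL}c\,ds = e^{-tL}S$ by Tonelli and the kernel semigroup property, where the paper instead takes monotone limits along the net of $g\in C_c(V)$ with $0\le g\le c$; both are valid.
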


\noindent\textbf{Remark.} We can give an interpretation of $M$ in terms of a diffusion
process on $V$ as follows. For $x\in V$, let $\delta_x$ be the characteristic function of $\{x\}$.
A diffusion
on $V$ starting in  $x$ with normalized measure is then given by $\frac{1}{m(x)} \delta_x$ at time $t=0$. It will yield to the amount of heat
$$\langle e^{-t L} \frac{\delta_x}{m(x)}, 1\rangle = \langle \frac{\delta_x}{m(x)}, e^{-t L} 1\rangle = \sum_{y\in V} e^{-tL}(x,y)$$
within $V$ at the time $t$. Moreover, at each time $s$ the
rate of heat killed at $y$ by the  killing term $c$ is given by $e^{-sL}(x,y)c(y)/m(y)$. The total amount of heat killed at $y$ until  the time $t$ is then given by $\int_0^t   e^{ -sL} (x,y)c(y)/m(y) ds$. The total amount of
heat killed at all vertices by $c$ till the time $t$ is accordingly given by
$$ \sum_{y\in V}  \int_0^t   e^{- sL} (x,y)\frac{c(y)}{m(y)} ds = \int_0^t
\sum_{y\in V} e^{- sL} (x,y)  \frac{c(y)}{m(y)} ds =  \int_0^t (e^{- sL} \frac{c}{m}) (x)
ds.  $$
This means that $M$ measures the amount of heat at time $t$ which has not been transferred to the boundary of $V$.

\begin{proof} By definition we have $M\geq 0$. By Proposition~\ref{propertyofS},  $ S (x) = \int_0^\infty  (e^{- sL} c /m)(x) ds$
is finite and we can therefore calculate
$$  \int_0^t (e^{- sL} \frac{c }{m})(x) ds = S (x) - \int_t^\infty (e^{- sL} \frac{c }{m})(x) ds
= S(x) - e^{- t L} S (x),$$
where the last statement follows by taking monotone limits along the net of
$g\in C_c (V)$ with $0\leq g \leq c/m$.
Thus,
$$M_t = e^{-t L} 1 + S - e^{- t L} S= S + e^{-tL} (1-S).$$
From this equality the desired statements follow easily:
 By  Proposition  \ref{propertyofS}, we have   $1 - S\geq 0$ and
$\widetilde{L} (1- S) = \widetilde{L} 1 - \widetilde{L} S = c/m - c/m= 0$. Lemma \ref{excessiv} then yields
$$ e^{- s L} ( 1 - S) \leq  e^{- t L} ( 1 - S)   \leq   1 - S$$
for all $s\geq t\geq 0$. Plugging this into the formula for $M_t$ gives, for all $0\leq t \leq s$,
$$ 0 \leq M_s \leq M_t\leq 1.$$
Moreover, as $S$ and the constant function $1$ are bounded, we can apply Theorem~\ref{solution} to  $M_t = e^{-t L}( 1-S)  + S$ to infer
that $t\mapsto M (x)$ is differentiable with
$$ \frac{d}{d t}M_t (x) = - \widetilde{L}  e^{-t L} 1 (x)  +
\widetilde{L}e^{- t L} S (x) = - \widetilde{L} M_t (x) +
\widetilde{L} S (x) =  - \widetilde{L} M_t (x)  + \frac{c(x)}{m(x)},$$
where we used $\widetilde{L} S = c/m$ from from Proposition \ref{propertyofS}.
\end{proof}

We now show that integration over $M$ yields a resolvent.

\begin{lemma}\label{integration} $ (L + \alpha)^{-1} (\alpha 1 + c/m) (x)  = \int_0^\infty
  \alpha e^{- t\alpha}   M_t (x) ds.$
\end{lemma}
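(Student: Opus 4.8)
The plan is to avoid any appeal to uniqueness of bounded solutions (which would be circular here, since non-uniqueness is precisely what stochastic incompleteness records) and instead to compute the right-hand side directly from the definition of $M_t$ together with the integral representation of the resolvent in Theorem \ref{resolvents}(c). First I would substitute the defining formula $M_t = e^{-tL}1 + \int_0^t e^{-sL} c\, ds$ from Lemma \ref{propertyofM} and split
$$\int_0^\infty \alpha e^{-t\alpha} M_t(x)\, dt = \int_0^\infty \alpha e^{-t\alpha} e^{-tL}1(x)\, dt + \int_0^\infty \alpha e^{-t\alpha}\Big(\int_0^t e^{-sL} c(x)\, ds\Big) dt.$$
Since $0\le M_t\le 1$ and every integrand built from the extended semigroup is non-negative, each summand is a well-defined element of $[0,\infty]$ and the splitting is legitimate.

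For the first summand, Theorem \ref{resolvents}(c) applied to $f=1$ gives $\int_0^\infty e^{-t\alpha} e^{-tL}1(x)\, dt = (L+\alpha)^{-1}1(x)$, so this term equals $\alpha (L+\alpha)^{-1}1(x)$. For the second summand I would interchange the two integrations by Tonelli's theorem — permissible because the integrand $\alpha e^{-t\alpha} e^{-sL}c(x)$ is non-negative on the region $\{0\le s\le t\}$, even at points where $e^{-sL}c(x)=+\infty$ — and evaluate the inner integral $\int_s^\infty \alpha e^{-t\alpha}\, dt = e^{-s\alpha}$. This leaves $\int_0^\infty e^{-s\alpha} e^{-sL} c(x)\, ds$, which by Theorem \ref{resolvents}(c) applied to $f=c$ is exactly $(L+\alpha)^{-1}c(x)$.

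It then remains to combine the two pieces into $\alpha(L+\alpha)^{-1}1(x) + (L+\alpha)^{-1}c(x) = (L+\alpha)^{-1}(\alpha 1 + c)(x)$. This uses only positive homogeneity and additivity of the extended resolvent on non-negative functions, which are immediate from the kernel representation $(L+\alpha)^{-1}f(x)=\sum_{y}(L+\alpha)^{-1}(x,y)f(y)$ recorded in Section \ref{Extended}, the right-hand side being linear in $f\ge 0$. The only genuine care in the argument is the justification of the Tonelli interchange and of this linearity when the functions involved may take the value $+\infty$; both are handled cleanly by working throughout with non-negative quantities valued in $[0,\infty]$ and invoking the monotone (net) definition of the extended operators. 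I expect no obstacle beyond this bookkeeping, since Theorem \ref{resolvents}(c) already does all the analytic work.
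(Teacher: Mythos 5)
Your proof is correct and takes essentially the same route as the paper's: both substitute $M_t = e^{-tL}1 + \int_0^t e^{-sL}c\,ds$, invoke Theorem \ref{resolvents}(c) together with the additivity of the extended operators on non-negative functions (via the kernel representation), and reduce everything to the identity $\int_0^\infty e^{-t\alpha}e^{-tL}c(x)\,dt = \int_0^\infty \alpha e^{-t\alpha}\bigl(\int_0^t e^{-sL}c(x)\,ds\bigr)\,dt$. The only difference is in how this last identity is verified: the paper uses partial integration applied termwise to the kernel sum $e^{-tL}c(x)=\sum_{y} e^{-tL}(x,y)c(y)$, while you use Tonelli on the non-negative double integral, which performs the same exchange and, if anything, is slightly cleaner since it needs no control of boundary terms and works directly with $[0,\infty]$-valued quantities.
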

\begin{proof}  As shown in (c) of Theorem~\ref{resolvents} we have
$$ (L + \alpha)^{-1} (\alpha 1 + \frac{c}{m}) (x) = \int_0^\infty
\alpha e^{- t\alpha}   e^{-t L} ( 1 + \frac{c}{\alpha m}) (x) dt. $$
Thus, it suffices to show that
$$ \int_0^\infty e^{- t\alpha}   (e^{-t L} \frac{c}{m})(x) dt = \int_0^\infty \alpha
e^{- t\alpha} \ab{\int_0^t (e^{-s L}\frac{c}{m})(x)  ds} dt.$$
This follows by partial integration applied to each (non-negative) term of the
sum $$ (e^{-t L}\frac{c}{m}) (x) = \sum_{y\in V} e^{-t L} (x,y) \frac{c(y)}{m(y)}.$$
This finishes the proof.
\end{proof}

\noindent\textbf{Remark.} Let us stress that the care taken with monotone convergence in the above arguments is quite necessary. For example one might think that $1 = (L+ \alpha)^{-1} (\ow L+\alpha) \, 1$. Combined with the previous lemma, this would lead to $1 = (L+\alpha)^{-1} (\alpha 1 + c/m)= \int_0^\infty \alpha e^{-t\alpha} M_t dt.$ However, the phenomenon we study  is exactly that the integral can be strictly smaller than $1$!

\smallskip

After these preparations we now prove our first  main result. Recall that we defined
$$ w = \int_0^\infty \alpha e^{-t \alpha } ( 1 - M_t) dt.$$

\begin{proof}[Proof of Theorem~\ref{main0}.]  As $\int_0^\infty \alpha e^{- t\alpha} dt =1$,    Lemma~\ref{integration} gives  $w = 1 - (L + \alpha)^{-1} (\alpha 1 + c/m) $. Thus, $w$
  solves $(\widetilde{L} + \alpha) w = 0$. Moreover, the minimality
  properties of the extended resolvent yield the maximality property of
  $w$.  More precisely, let $l$ be any non-negative function bounded by $1$ with $(\widetilde{L} + \alpha)l\leq  0$. Then, $ 1-l$ is non-negative and satisfies
  $$(\widetilde{L} +\alpha) (1-l) = \alpha 1+\frac{c}{m} - (\widetilde{L} +\alpha)l \geq \alpha 1+\frac{c}{m}.$$
The minimality property of  $1 - w = (L+\alpha)^{-1} (\alpha 1 + c/m)$ then gives $ 1-w  \leq  1-l$, and the desired inequality $l\leq w$ follows.

\smallskip

It remains to show the equivalence statements.\\
(v)$\Longrightarrow$(iv): This is clear as $0\leq M_t \leq 1$ and $M$ is continuous.\\
(iv)$\Longrightarrow$(iii): This is clear from the properties of $w$ shown above.\\
(iii)$\Longrightarrow$(ii):  This is clear.\\
(ii)$\Longrightarrow $ (i):  Let $l^+$ be the positive part of $l$, i.e.,  $l^+ (x) = l(x)$ if $l(x)>0$ and $l^+ (x) =0$ otherwise. If $l^+$ is trivial, the function $- l$ is a nontrivial, non-negative bounded  solution and (i) follows. Otherwise a direct calculation shows that $l^+$ is a nontrivial subsolution. Obviously, $l^+$ is non-negative and bounded. \\
(i)$\Longrightarrow$ (v): If there exists a nontrivial non-negative  subsolution, then $w$
as the largest subsolution  must be nontrivial. Hence, there must exist $t>0$
and $x\in V$ with $M_t (x) <1$.\\
(v) $\Longrightarrow$ (vi):  Lemma \ref{propertyofM} gives that   $N:= 1 - M$
satisfies $N_0 =0$ and $\frac{d}{d t}N + \widetilde{L} N =0$. This gives the desired
implication.\\
(vi)$\Longrightarrow$ (i): This is a direct consequence of Lemma
\ref{heattosubharmonisch}.
\end{proof}

\section{Stochastically complete graphs with incomplete subgraphs} \label{Stochastically}
In this section we prove Theorem~\ref{main1b}  and Theorem~\ref{main1}. For Theorem~\ref{main1} the basic idea is to attach graphs satisfying $\SC$ to each vertex of a graph (with  $\SI$) such that the resulting graph will satisfy $\SC$. As adding a potential to a graph can be interpreted as adding edges to infinity, the proof of Theorem~\ref{main1b} can be seen as a variant of the proof of Theorem~\ref{main1}.

\medskip

The graphs we attach will be the following. Let
$(N,b_N,0)$ the graph with vertex set
$N=\{0,1,2,\ldots\}$, $b_N(x,y)=1$ if $|x-y|=1$ and $b_N(x,y)=0$
otherwise and $c\equiv 0$. Moreover, let the measure $m$ on $N$ be constant $m\equiv 1$.
The next lemma shows that when $u$ solves
$(\ow L_N + \al)u(x)=0$ for some $\al >0 $ and all $x\in
N\setminus\{0\}$ then it is only bounded if it is exponentially
decreasing.

\medskip

\begin{lemma}\label{l:N} Let   $(N,b_N,0)$  be as above and  $m\equiv1$. Let $\alpha >0$ be given.  Let
$u$ be a  positive function on $N$ with  $(\ow L_N+\al)u(x)=0$ for all  $x\geq1$. If for some $x\geq 1$
$$u(x)\geq\frac{2}{2+\al} u(x-1),$$
then $u$ increases exponentially.
\begin{proof} Let $u$ be a positive solution.
If $(1+\frac{\alpha}{2})u(x)\geq u(x-1)$ for some $x\geq 1$, we get by the
equation $(\ow L_N+\al)u(x)=0$
\begin{eqnarray*}
0&=&(1+\frac{\alpha}{2})u(x)-u(x+1)+(1+\frac{\alpha}{2})u(x)-u(x-1)\\
&\geq&(1+\frac{\alpha}{2})u(x)-u(x+1).
\end{eqnarray*}
This implies $u(x+1)\geq(1 + \frac{\alpha}{2}) u(x)$ and, in particular, $(1 + \frac{\alpha}{2}) u(x+1)\geq u(x)$. By induction we then get for $y\geq x$
$$u(y)\geq(1+\frac{\alpha}{2})^{y-x} u(x)$$ which gives the statement.
\end{proof}
\end{lemma}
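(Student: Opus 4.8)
The plan is to translate the statement into an explicit three-term recurrence and then run a self-propagating induction. First I would write out the equation $(\ow L_N + \al) u(x) = 0$ for $x \geq 1$. Since $b_N(x,y) = 1$ exactly when $|x-y| = 1$, while $c \equiv 0$ and $m \equiv 1$, one has $\ow L_N u(x) = 2u(x) - u(x-1) - u(x+1)$, so the equation becomes
$$(2 + \al)\, u(x) = u(x-1) + u(x+1), \qquad x \geq 1.$$
It is convenient to record the hypothesis in the equivalent rescaled form $\left(1 + \tfrac{\al}{2}\right) u(x) \geq u(x-1)$, which is just a restatement of $u(x) \geq \tfrac{2}{2+\al}\, u(x-1)$.

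The key step is a one-step propagation. Assuming $\left(1 + \tfrac{\al}{2}\right) u(x) \geq u(x-1)$, I would rewrite the recurrence as the identity
$$0 = \left[\left(1 + \tfrac{\al}{2}\right) u(x) - u(x+1)\right] + \left[\left(1 + \tfrac{\al}{2}\right) u(x) - u(x-1)\right].$$
The second bracket is non-negative by the hypothesis, so the first bracket must be non-positive, which gives $u(x+1) \geq \left(1 + \tfrac{\al}{2}\right) u(x)$.

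Now I would close the induction. The only point requiring a moment of care is that the growth estimate just obtained must feed the hypothesis back at the next index. Using $u > 0$ and $1 + \tfrac{\al}{2} \geq 1$, the bound $u(x+1) \geq \left(1 + \tfrac{\al}{2}\right) u(x) \geq u(x)$ yields $\left(1 + \tfrac{\al}{2}\right) u(x+1) \geq u(x)$, i.e. the hypothesis holds at $x+1$. Iterating, $u(y) \geq \left(1 + \tfrac{\al}{2}\right) u(y-1)$ for every $y \geq x$, and hence $u(y) \geq \left(1 + \tfrac{\al}{2}\right)^{y-x} u(x)$. Since $\al > 0$ forces $1 + \tfrac{\al}{2} > 1$, this is exponential growth, as claimed. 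I do not expect a genuine obstacle here: positivity of $u$ is the only hypothesis that must be handled carefully, both to keep the inequality directions fixed and to let the growth bound regenerate the hypothesis at each step, and everything else is a direct computation.
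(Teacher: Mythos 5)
Your proof is correct and follows essentially the same route as the paper: rewrite the equation at $x$ as the identity $0 = \left[\left(1+\tfrac{\al}{2}\right)u(x) - u(x+1)\right] + \left[\left(1+\tfrac{\al}{2}\right)u(x) - u(x-1)\right]$, use the hypothesis to make the second bracket non-negative, deduce $u(x+1)\geq\left(1+\tfrac{\al}{2}\right)u(x)$, and iterate. The only difference is that you spell out explicitly why the growth bound regenerates the hypothesis at the next index (via positivity and $1+\tfrac{\al}{2}\geq 1$), a point the paper leaves implicit in its ``by induction''.
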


\begin{proof}[Proof of Theorem~\ref{main1}.]
Let $(W,b_W,c_W)$ be a weighted graph and $m$ a measure of full support on $W$.
If $\SC$ holds we are done, so we assume the contrary. We will construct a weighted graph $(V,b,c)$ satisfying $\SC$ such that $W\subseteq V$ and $b\vert_{W\times W}=b_W$.
Define
$$\deg_{b_W} (x)= \frac{1}{m(x)}\sum_{y\in W}b(x,y).$$
 Let
$n:W\ra (0,\infty) $ be a function which satisfies $n(x)\deg_{b_W}(x)m(x)\in\N$
and
$$ \sum_{j=1}^\infty  n(x_j) =\infty$$
for any sequence $(x_j)$ in $W$. (For example we can set $ n(x) = {[\deg_{b_W} m +1]  }/{ \deg_{b_W} m   }$, where $[x]$ denotes the smallest integer not exceeding $x$.)

To each vertex $x\in W$, we attach  $n(x)\deg_{b_W}(x)m(x)$ copies of
the weighted  graph $(N,b_N,0)$ defined in the beginning of the section. We do
this by identifying $x\in W$ with the vertices $0$ in the associated
copies of $N$. We denote the resulting graph by $V$ and define $b$ on $V\times V$ by letting
$$b(x,y)=\left\{
\begin{array}{l@{\quad:\quad}l}
  b_W(x,y)& x,y\in W, \\
  b_N(x,y)& x,y\;\mathrm{in\;the\;same\;copy\;of\;}N,\\
  0&\mathrm{otherwise}.
\end{array}\right.$$
Moreover, we extend $c$ and $m$ to $V$ by letting $c\equiv0$ and $m\equiv1$ on $V\setminus W$
and denote $\ow L=\ow L_V$.
We will show that for all $\al>0$ every non-negative nontrivial function $u$ on
$V$, which satisfies $(\ow L+\al)u=0$,  is unbounded.  Without loss of generality, we can assume that the graph is connected. Then, any  non-negative nontrivial solution $u$ of   $(\ow L+\al)u=0$ must be positive by the minimum principle.  Let $u$ be  such a  positive solution of
$(\ow L+\al)u=0$ and assume it is bounded.

Fix  $x_0\in W$ and a  sequence $(\rho_r)$ in $\R$ with $(2+\alpha)/2 >\rho_r >1 $ and $\sum (\rho_r -1) <\infty$.  By induction we can now define  for each $r\in \N$ an  $x_r\in V$  such that
$b(x_r, x_{r-1})> 0$ and $\rho_r u(x_{r+1})\geq \sup_{y\in V, b(x_r,y)>0} u(y)$.
Since we assumed $u$ bounded,  Lemma \ref{l:N} gives $u(y) < 2u(x_r)/(2+\al) $
for each vertex $y$ in a copy of $N$ which is adjacent to $x_r$. If $x_{r+1}$ was in a copy of $N$, then this would imply that $u$ has a maximum in $x_r$ which leads to a contradiction to $(\widetilde L+\alpha)u=0$.
Thus all $x_r$ belong to $W$. The equation  $(\ow L+\al)u(x_r)=0$ now  gives
\begin{eqnarray*}
0&=& \frac{1}{m(x_r)} \sum_{y\in V}  b(x_r,y) (u(x_r) - u(y)) +  \frac{c(x_r)} {m(x_r)} u(x_r) + \alpha u(x_r)\\
&\geq&\deg_{b_W}(x_r)u(x_r)+\frac{1}{m(x_r)}\ab{\sum_{y\in
V\setminus W}b(x_r,y)(u(x_r)-u(y))-\sum_{y\in W}b(x_r,y)u(y)}\\
&\geq&\ab{1+\frac{\alpha  n(x_r)}{2+\al}}\deg_{b_W}(x_r)u(x_r)
-\rho_r\deg_{b_W}(x_r) u(x_{r+1}).
\end{eqnarray*}
In the second  inequality, we used $\al, c(x_r), u(x_r)\geq0$. In the third inequality, we estimated the sum over $y\in V\setminus W$ by the inequality $u(y)<2/(2+\al) u(x_{r})$ of
Lemma \ref{l:N} and the sum over $y\in W$ by the
choice of $x_{r+1}$. We get by direct calculation and iteration
$$u(x_{r+1})\geq
\frac{1}{\rho_{r}}\ab{\frac{\alpha  n(x_r)}{2+\al}+1}u(x_r)
\geq\ab{\prod_{j=1}^r\frac{1}{\rho_{j}}}\ab{\prod_{j=1}^r \ab{\frac{\alpha n(x_j)}{2+\al}+1}}u(x_0).$$
Letting $r$ tend to infinity the   right hand
side diverges if and only if $n$ is chosen such that
$\sum_{j=1}^\infty n(x_j)$ is divergent. (Notice that the infinite product over $(1/\rho_j)$ is greater than zero since we assumed that $(\rho_j-1)$ is summable).
Thus, by our choice of  $n$, we arrive at the contradiction  that $u$ is unbounded. By Theorem~\ref{main0}, this construction
shows that for every  $(W,b_W,c_W)$ there is a weighted graph $(V,b,c)$ which is $\SC$ and $(W,b_W,c_W)$ is a subgraph of $(V,b,c)$.
\end{proof}

\begin{remark}\rm{
An alternative construction is to add single vertices instead
of copies of $N$. For the resulting graph and a function $u$
satisfying $(\ow L+\al)u=0$ the value of $u$ on an added vertex
$y$ adjacent to the vertex  $x$ in the original graph is then determined by
$(1+\al)u(y)=u(x)$. The rest of the proof can now be carried out in a  similar manner. We chose
to do the construction above to avoid the impression that the
$\SC$ is the result of adding some type of
boundary to the graph.}
\end{remark}

We finish this section by  proving  Theorem~\ref{main1b}.
\begin{proof}[Proof of Theorem~\ref{main1b}] Set $b(x):= \sum_{y\in V} b(x,y)$.
Choose $c':V\to[0,\infty)$ such that for any sequence $(x_j)$ in  $V$ satisfying $b(x_j,x_{j+1})>0$ for all $j\in \N$, we have $$\sum_{j=1}^\infty \frac{c(x_j)+c'(x_j)}{b(x_j)}=\infty.$$
(For example one may choose $c'(x)=b(x)$ for $x\in V$.) We now follow
a similar reasoning as in the proof of Theorem~\ref{main1}: We consider  nontrivial non-negative solution $u$  of $(\ow L_{b,c+c',m}+\al)u=0$, $\al> 0$ and choose  inductively for each $r\in \NN$ an  $x_r\in V$ and $ 2 \geq \rho_r > 1$ with  $u(x_0) > 0$, $b(x_r,x_{r+1})>0$ and  $\rho_r u(x_{r+1}) \geq \sup_{ y: b(x_r,y)>0} u(y)$ for all $r\in \NN$. Then, a direct calculation gives
$ u(x_{r+1}) \geq \frac{1}{\rho_r} (1 + \frac{ c(x_{r}) + c'(x_{r})}{b(x)}) u (x_r)$
and unboundedness of $u$ follows (whenever $\rho_r$ converges to $1$ sufficiently fast). Hence,
by Theorem~\ref{main0} the graph $(V,b,c+c')$ satisfies $\SC$.
\end{proof}

\section{An incompleteness criterion} \label{An}
In this section we prove Theorem~\ref{main2}, which is a
counterpart to Theorem~\ref{main1}. As shown  there a subgraph with  $\SI$ is well compatible with the whole graph satisfying  $\SC$. Theorem~\ref{main2} shows under which
additional condition $\SI$ of a subgraph
implies $\SI$ for the whole graph. This condition
is about how heavily the incomplete subgraph is connected with the
rest of the graph. Not having control over the amount of connections
leads possibly to $\SC$ as we have seen in Theorem~\ref{main1}.

\smallskip

For a subset  $W$ of a weighted graph $(V,b,c)$ we
define the outer boundary $\dd W$  of $W$ in $V$ by
$$\dd W =\{x\in V\setminus W : \;\exists y\in W,\;b(x,y)>0\}.$$
Note that the outer boundary of $W$ is a subset of $V\setminus W$.  We will be concerned with decompositions of the whole set $V$ into two sets $W$ and $W':=V\setminus W$. In this case, there are two outer boundaries. Our intention is to extend  positive bounded functions $u$ on $W$ with  $(\widetilde{L}_W^{(D)} + \alpha) u  \leq 0$ to positive bounded  functions $v$  on the whole space satisfying $(\widetilde{L} + \alpha) v \leq 0$  . To do so, we will have to take particular care at  what happens on the two boundaries.

\begin{lemma}\label{hilfeeins} Let $(V,b,c)$ be a connected weighted graph.  Let $W\subseteq V$ be non-empty.  Then, any connected component of $\ov W= V\setminus W$ contains a point $x\in \dd W$.
\end{lemma}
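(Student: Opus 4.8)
The plan is to use connectedness of $V$ to build, from an arbitrary vertex of a component, a path reaching $W$, and then to read off a boundary point from the last vertex of that path that still lies outside $W$. First I would fix a connected component $C$ of $\ov W = V\setminus W$ and pick any $z\in C$. Since $W$ is non-empty, I would also choose some $w\in W$. Because $(V,b,c)$ is connected, there is a path $z = x_0, x_1, \ldots, x_n = w$ with $b(x_i,x_{i+1})>0$ for all $i$. As $x_0 = z\in \ov W$ while $x_n = w\in W$, the set of indices $i$ with $x_i\in W$ is non-empty, so I would let $k$ be the smallest index with $x_{k+1}\in W$.

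By minimality of $k$, the vertices $x_0,\ldots,x_k$ all lie in $\ov W$, while $x_{k+1}\in W$ and $b(x_k,x_{k+1})>0$. Hence $x_k\in V\setminus W$ has a neighbor in $W$, which is precisely the definition of $x_k\in\dd W$. It then remains to check that $x_k$ lies in the same component $C$ as $z$: the segment $x_0,\ldots,x_k$ is a path all of whose vertices belong to $\ov W$ and all of whose edges have positive weight, so $x_k$ is connected to $z = x_0$ within $\ov W$. Therefore $x_k\in C$, and $C\cap \dd W\neq\emptyset$, as claimed.

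This argument is essentially elementary graph connectivity, so I do not expect a serious obstacle. The only point requiring a little care—and the place where the statement could fail for a careless reader—is verifying that the boundary point produced genuinely lies in the prescribed component $C$ rather than in some other component of $\ov W$; this is secured by taking the \emph{first} crossing into $W$, which keeps the initial segment of the path entirely inside $\ov W$ and hence inside $C$.
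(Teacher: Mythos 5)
Your proof is correct and takes essentially the same approach as the paper's: the paper joins a point of $W$ to a point of the given component by a path and selects the \emph{last} vertex lying in $W$, so that the next vertex is the desired boundary point, while you traverse the path in the opposite direction and select the \emph{first} crossing into $W$. The two arguments are mirror images of one another, and your care in checking that the boundary point stays in the prescribed component matches the role of the tail (respectively initial) segment in the paper's proof.
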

\begin{proof} Choose $x\in W$ arbitrarily. By assumption, any $y\in \ov W$ is connected to $x$ by a path in $V$, i.e., there exist   $x_0,x_1,\ldots,x_n\in V$ with $b(x_i, x_{i+1}) >0$ and $x_0 = x$, $x_n = y$. Let $m\in \{0,\ldots, n\}$ be the largest number with $x_m\in W$. Then, $x_{m+1}$ belongs to both the boundary of $W$ and the connected component of $y$.
\end{proof}

\begin{lemma}\label{hilfezwei}  Let  $(V,b,c)$ be a weighted graph and $m$ a measure of full support. Let  $U\subseteq V$ and $\varphi$ be a non-negative function in $\ell^2 (U,m)$. Then, $(L_U^{(D)} + \alpha)^{-1} \varphi$ is  non-negative on $U$ and positive on  the connected components of any $x\in U$ with $\varphi (x) >0$.
\end{lemma}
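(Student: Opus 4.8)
The plan is to reduce to the connected case and invoke the positivity-improving property of Corollary \ref{positivityimproving}. Non-negativity is immediate: since $(L_U^{(D)} + \alpha)^{-1}$ is the resolvent of a Dirichlet form, it is positivity preserving, so $(L_U^{(D)} + \alpha)^{-1}\varphi \geq 0$ whenever $\varphi \geq 0$.

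For the positivity statement, I would first observe that $L_U^{(D)}$ respects the decomposition of $U$ into its connected components. Writing $U = \bigsqcup_j C_j$ with $C_j$ the connected components of $U$ with respect to the edges $b|_{U\times U}$, the form $Q_U^{(D)}$ splits as $Q_U^{(D)}(u) = \sum_j Q_{C_j}^{(D)}(u|_{C_j})$: the off-diagonal part $\frac12\sum_{x,y\in U} b(x,y)(u(x)-u(y))^2$ only involves pairs $x,y$ lying in the same component (as $b(x,y)=0$ otherwise), and the diagonal parts obviously split. Consequently $\ell^2(U,m_U) = \bigoplus_j \ell^2(C_j,m_{C_j})$ reduces $L_U^{(D)}$, and its resolvent decomposes as $(L_U^{(D)}+\alpha)^{-1} = \bigoplus_j (L_{C_j}^{(D)}+\alpha)^{-1}$. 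A small but pleasant point is that for $x\in C_j$ the edge deficiency $d_U(x)=\sum_{y\in V\setminus U} b(x,y)$ coincides with $d_{C_j}(x)=\sum_{y\in V\setminus C_j} b(x,y)$, since $b(x,y)=0$ for $y$ in a different component; thus each block $L_{C_j}^{(D)}$ is exactly the Dirichlet operator of the connected weighted graph $(C_j, b_{C_j}, c_{C_j}+d_{C_j})$.

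Then, given $x\in U$ with $\varphi(x)>0$, let $C$ be its connected component. The restriction $\varphi|_C$ is a non-negative nontrivial element of $\ell^2(C,m_C)$, and $(C,b_C^{(D)},c_C^{(D)})$ is connected, so Corollary \ref{positivityimproving} shows that $(L_C^{(D)}+\alpha)^{-1}(\varphi|_C)$ is strictly positive on all of $C$. By the direct-sum decomposition, $(L_U^{(D)}+\alpha)^{-1}\varphi$ agrees with $(L_C^{(D)}+\alpha)^{-1}(\varphi|_C)$ on $C$, hence is positive on the connected component of $x$, as claimed.

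The routine verifications are standard; the one step requiring care is the reduction to connected components, namely checking that $L_U^{(D)}$ genuinely decomposes and that each block is a connected-graph Dirichlet operator, so that Corollary \ref{positivityimproving} may be applied blockwise. An alternative, more self-contained route avoids this bookkeeping: one keeps $u:=(L_U^{(D)}+\alpha)^{-1}\varphi\geq 0$, extends it by zero to $V$, uses $\widetilde{L}_U^{(D)} u = \widetilde{L}(i_U u)$ on $U$ together with Lemma \ref{lzwei} to get $(\widetilde{L}+\alpha)(i_U u)=\varphi\geq 0$ on $U$, and applies the minimum principle on the connected component $C$ of $x$; the resulting dichotomy ``$u\equiv 0$ on $C$ or $u>0$ on $C$'' is closed off by noting that $u\equiv 0$ on $C$ would force $(\widetilde{L}_U^{(D)}+\alpha)u(x)=0\neq \varphi(x)$.
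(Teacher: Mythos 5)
Your proposal is correct and takes essentially the same route as the paper: the paper's entire proof is the one-line observation that $L_U^{(D)}$ is the operator associated to the weighted graph $(U,b_U^{(D)},c_U^{(D)})$, so Corollary \ref{positivityimproving} gives the statement. Your explicit reduction to connected components (including the check that the edge deficiencies $d_U$ and $d_{C_j}$ agree blockwise) simply fills in detail the paper leaves implicit, since that corollary is stated only for connected graphs; the alternative minimum-principle argument you sketch is also sound but is not the paper's route.
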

\begin{proof} The operator $L_U^{(D)}$ is associated to the weighted graph  $(U,b_U^{(D)}, c_U^{(D)})$. Hence, Corollary \ref{positivityimproving} gives the statement.
\end{proof}

\begin{lemma}\label{hilfedrei} Let $(V,b,c)$ be a weighted graph and $m$ a measure of full support. Let $U\subseteq V$ be given. Let $v\in \widetilde{F}$ and denote the restriction of $v$ to $U$ by $u$ and the restriction of $v$ to $V\setminus U$ by $u'$.  Then, for any $x\in U$
 $$ (\widetilde{L} + \alpha) v (x) =  (\widetilde{L}_U^{(D)} + \alpha) u(x) - \frac{1}{m(x)}\sum_{y\in V\setminus U} b(x,y) u'(y).$$
\end{lemma}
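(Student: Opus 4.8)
The plan is to verify the identity by a single direct computation, the key being to split the defining sum of $\widetilde{L}$ over all of $V$ into its contributions from $U$ and from $V\setminus U$. First I would write out both sides for a fixed $x\in U$. By definition,
$$(\widetilde{L}+\alpha)u(x) = \frac{1}{m(x)}\sum_{y\in V} b(x,y)(u(x)-u(y)) + \frac{c(x)}{m(x)}u(x) + \alpha u(x),$$
while, using that $v=u|_U$ and that $x\in U$ forces $v(x)=u(x)$, the formula for the formal Dirichlet Laplacian gives
$$(\widetilde{L}_U^{(D)}+\alpha)v(x) = \frac{1}{m(x)}\Big(u(x)\sum_{y\in V}b(x,y) - \sum_{y\in U}b(x,y)u(y) + c(x)u(x)\Big) + \alpha u(x).$$

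Before subtracting, I would record that every sum occurring here converges absolutely: the diagonal sum $\sum_{y\in V} b(x,y)$ is finite by (b2), and $\sum_{y} |b(x,y)u(y)|<\infty$ because $u$ lies in the formal domain $\widetilde{F}$ of \eqref{ftilde}. Hence the sums over $U$ and over $V\setminus U$ are individually well defined and may be split and recombined freely, which is what licenses the whole computation.

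Next I would subtract the two displayed expressions. The terms $\alpha u(x)$, the killing term $\tfrac{c(x)}{m(x)}u(x)$, and the full diagonal contribution $\tfrac{1}{m(x)}u(x)\sum_{y\in V}b(x,y)$ appear identically in both and cancel. Writing the off-diagonal part of the first expression as $\tfrac{1}{m(x)}\sum_{y\in U}b(x,y)u(y) + \tfrac{1}{m(x)}\sum_{y\in V\setminus U}b(x,y)u(y)$, the sum over $U$ cancels against the corresponding term in $(\widetilde{L}_U^{(D)}+\alpha)v(x)$, and what survives is precisely $-\tfrac{1}{m(x)}\sum_{y\in V\setminus U}b(x,y)u(y)$. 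Since $u(y)=v'(y)$ for $y\in V\setminus U$, this is exactly the claimed boundary term, and the identity follows.

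There is no deep obstacle here; the argument is entirely routine. The one point deserving care is the bookkeeping of the diagonal term $u(x)\sum_{y\in V}b(x,y)$: in $\widetilde{L}_U^{(D)}$ the diagonal sum is taken over all of $V$, whereas only the off-diagonal sum is truncated to $U$. This asymmetry is precisely the source of the edge-deficiency term, and keeping track of it correctly — together with the absolute convergence noted above that permits the rearrangement — is the only step one must not rush.
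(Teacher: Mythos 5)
Your computation is correct and is precisely the ``direct calculation'' that the paper's one-line proof alludes to: splitting the sum $\sum_{y\in V}b(x,y)u(y)$ into its $U$ and $V\setminus U$ parts, noting that the diagonal term $u(x)\sum_{y\in V}b(x,y)$, the killing term, and the $\alpha$-term cancel, and using $u\in\widetilde{F}$ together with (b2) to justify the rearrangement. Nothing further is needed.
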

\begin{proof} This follows by direct calculation.
\end{proof}

\begin{proof} [Proof of Theorem~\ref{main2}.]
Let  $W\subseteq V$ be given  such that for every $\al>0$ there is
a bounded non-negative nontrivial  function $u$ on $W$ satisfying
$$ (\widetilde{L}_W^{(D)}  +\al)u\leq0.$$
By Theorem \ref{main0},  it   suffices to show that any such  $u$ can
be extended to a non-negative and bounded function $v$ on $V$ such that
$$( \widetilde{L} +\al)v\leq 0.$$
To do so, we proceed as follows: Set $\ov W=V\setminus W$.   Define
$$\psi: \ov W \longrightarrow \RR,\; \: \psi (x) =  \frac{1}{m(x)}\sum_{y\in W} b(x,y) u (y).$$
Thus, $\psi$ vanishes on $\ov W\setminus \dd W$ and is non-negative  on $\dd W$.
Now, choose $\ph\in \ell^2 (\ov W,m_{W})$ with
$0\leq \ph\leq \psi$
and $ \ph (x)\neq 0$ whenever $\psi (x)\neq 0$.   Thus,
$$\ph\geq 0 \;\:\mbox{on}\:\; \dd W\;\:\mbox{and}\:\; \ph \equiv 0 \;\:\mbox{on}\:\;   \ov W\setminus \dd W.$$
Define $\ov u$ on $\ov W$ by
$$ \ov u:=(L_{\ov W}^{(D)} +\alpha)^{-1} \ph.$$
As $\ph$ is non-negative  on $\dd W$, combining Lemma \ref{hilfeeins} and Lemma \ref{hilfezwei} shows  that $\ov u$ is non-negative (on $\ov W$). Now,  define $v$ on $V$ by setting $v$ equal to $u$ on $W$ and setting $v$ equal to $\ov u$ on $\ov W$.
We now investigate  for each $x\in V$ the value of
$$(\widetilde{L} + \alpha) v (x).$$
We consider four cases.

\textit{Case 1: $x\in W \setminus \dd \ov W$}. Then, $(\widetilde{L} + \alpha) v (x) = (\widetilde{L}_W^{(D)}  +\al)u (x) \leq 0$ by assumption on~$u$.\\
\textit{Case 2: $x\in \ov W\setminus \dd W$}. Then, $(\widetilde{L} + \alpha) v (x) = (\widetilde{L}_{\ov W}^{(D)}  +\al)\ov u (x) = \ph (x)=0$ by construction of $\ov u$.\\
\textit{Case 3: $ x\in \dd \ov W$}.  Lemma \ref{hilfedrei} with $U =W$  gives
$$ (\widetilde{L} + \alpha) v (x) = (\widetilde{L}_{W}^{(D)} +\alpha) u(x) - \sum_{y\in \ov W} b(x,y) \ov u (y) \leq 0.$$
Here, the last inequality follows as $(\widetilde{L}_{W}^{(D)} +\alpha) u(x)\leq 0$ by assumption on $u$ and
$\ov u$ is non-negative.\\
\textit{Case 4: $ x\in \dd W$}.  Lemma \ref{hilfedrei} with $U =\ov W$ gives
\begin{align*}
 (\widetilde{L} + \alpha) v (x) = (\widetilde{L}_{\ov W}^{(D)} +\alpha) \ov u(x) - \sum_{y\in W} b(x,y) u (y)  = \ph (x) - \psi (x) \leq 0.
\end{align*}
This finishes the proof.
\end{proof}

\begin{proof} [Proof of Corollary~\ref{beispiel}.] Let $C\geq0$ be a constant such that $\sum_{y\in V\setminus W} b(x,y)/m(x)\leq C$ for all $x\in W$ and let $\alpha>0$. A non-negative subsolution for $\al+C$ with respect to the operator associated to $(b_W^{(D)},0)$ is obviously a subsolution for $\al$ with respect to the operator associated to $(b_W^{(D)},c_W^{(D)})$. By assumption such non-negative, nontrivial,  bounded subsolutions for $\al+C$ and $(b_W^{(D)},0)$ exist for all $\al>0$. Therefore $(b_W^{(D)},c_W^{(D)})$ satisfies $\SI$. The statement now follows from Theorem~\ref{main2}.
\end{proof}

\footnotesize{
\textbf{Acknowledgements.} The research of M.K. is financially supported by a grant from Klaus Murmann Fellowship Programme (sdw). Part of this work was done while he was visiting Princeton University. He would like to thank the Department of Mathematics for its hospitality. He would also like to thank Jozef Dodziuk and Radek Wojciechowski for several inspiring discussions bringing up some of the questions which motivated this paper. D.L. would like to thank Andreas Weber for most stimulating discussions and Peter Stollmann for generously sharing his knowledge on Dirichlet forms on many occasions. Partial support from German Science Foundation (DFG) is gratefully acknowledged.
}

\end{document}